\newtheorem{theorem}{Theorem}
\newtheorem{definition}[theorem]{Definition}
\newtheorem{example}[theorem]{Example}
\newtheorem{lemma}[theorem]{Lemma}
\newtheorem{proposition}[theorem]{Proposition}
\newtheorem{remark}[theorem]{Remark}
\begin{document}
\title[Nonlinear Spaces]{Certain Results For a Class of Nonlinear Functional
Spaces}
\author{Kamal Soltanov}
\address{Kamal Soltanov: Faculty of Science and Literature, Department of
Mathematics, I\u{g}d\i r University, 76000, I\u{g}d\i r, Turkey.}
\email{kamal.soltanov@igdir.edu.tr}
\author{U\u{g}ur Sert}
\address{U\u{g}ur Sert: Faculty of Science, Department of Mathematics,
Hacettepe University, 06800, Beytepe, Ankara, Turkey.}
\email{usert@hacettepe.edu.tr}
\subjclass[2010]{Primary 46A99, 46E30, 46E35, 46T99; Secondary 26D20, 26D99,
35D30, 35J62, 35K61.}
\keywords{pn-spaces, integral inequalities, nonlinear differential
equations, embedding theorems.}

\begin{abstract}
In this article, we study properties of a class of functional spaces which
arise from investigation of nonlinear differential equations. We establish
some integral inequalities then by applying these inequalities, we prove
some lemmas and theorems which indicate the relation of these spaces
(pn-spaces) with the Lebesgue and Sobolev spaces in the case when pn-spaces
with constant and variable exponents
\end{abstract}

\maketitle

\section{Introduction}

This paper is concerned with some features of a class of functional spaces
which are emerged from investigation of nonlinear differential equations.
Studying boundary value problems require to examine and understand the
functional spaces which are directly related with the considered problem. In
other words, it is required to work on the domain of the operator generated
by the addressed boundary value problem. We specify that it is better to
study each BVPs on its own space. Furthermore, detailed analysis of these
spaces and examining their topology, structure etc. cause to gain better
results of the possed problem (for example regularity of the solution).

The spaces generated by boundary value problems for the linear differential
equations are generally linear spaces such as Sobolev spaces and different
generalizations of them. Apart from boundary value problems for linear
differential equations, the spaces generated by nonlinear differential
equations (essentially the domain of the corresponding operator) are subsets
of linear spaces and do not have linear structure. The class of spaces of
this type were introduced and investigated by Soltanov in the abstract case
(see, e. g. \cite{S1}-\cite{S6}), and also in the case of functions spaces
(see, e. g. \cite{S3}-\cite{S10} and references therein where various
subsets of linear spaces of this type were searched). In the mentioned
articles, topology of these spaces were investigated and shown that under
what circumstances they are metric or pseudo-metric spaces. Starting from
these features of the introduced spaces, they were defined as the class of
pseudo-normed spaces or pn-spaces and the class of quasi-pseudo normed
spaces or $qn$-spaces.

In this work, we focus on the characteristics of certain class of functional
pn-spaces. Essentially, we deal with the following class of functional
pn-spaces. \newline
Let $\Omega \subset \mathbb{R}^{n}\left( n\geq 1\right) $ be bounded domain
with sufficiently smooth boundary. Here the class of functions $u:\Omega
\longrightarrow 
\mathbb{R}
$ of the following type will be investigated 
\begin{equation}
S_{m,\alpha ,\beta }\left( \Omega \right) :=\left\{ u\in L^{1}\left( \Omega
\right) \mid \left[ u\right] _{S_{m,\alpha ,\beta }\left( \Omega \right)
}^{\alpha +\beta }<\infty \right\}   \tag{1.1}  \label{01}
\end{equation}%
where $\alpha \geq 0,$ $\beta \geq 1$ are real numbers and $m$ is an integer
and 
\begin{equation*}
\left[ u\right] _{S_{m,\alpha ,\beta }\left( \Omega \right) }^{\alpha +\beta
}:=\sum_{0\leq \left\vert k\right\vert \leq m}\left( \int\limits_{\Omega
}\left\vert u\right\vert ^{\alpha }\left\vert D^{k}u\right\vert ^{\beta
}dx\right) ,\quad D=\left( D_{1},D_{2},...,D_{n}\right) ,
\end{equation*}%
$D_{i}=\frac{\partial }{\partial x_{i}}$, $D^{k}\equiv
D_{1}^{k_{1}}D_{2}^{k_{2}},...,D_{n}^{k_{n}}$, $i=\overline{1,n}$, $%
\left\vert k\right\vert =\sum\limits_{i=1}^{n}k_{i}$. Here, we only address
the cases $m=1,2$.

It is important to note that the following subset of $L^{p}\left( \Omega
\right) $, $p\geq 2$ 
\begin{equation*}
M:=\left\{ u\in L^{1}\left( \Omega \right) \left\vert \ \overset{n}{%
\sum_{i=1}}\left( \int\limits_{\Omega }\left\vert u\right\vert
^{p-2}\left\vert D_{i}u\right\vert ^{2}dx\right) <\infty ,u\left\vert \
_{\partial \Omega }\right. =0\right. \right\} 
\end{equation*}%
was arose in the article of Dubinskii earlier (\cite{D1}, \cite{L}, \cite{D2}%
) while studying the following nonlinear problem: 
\begin{equation}
\frac{\partial u}{\partial t}-\sum_{i=1}^{n}D_{i}\left( \left\vert
u\right\vert ^{p-2}D_{i}u\right) =h(x,t),\quad \left( t,x\right) \in \left(
0,T\right) \times \Omega ,  \tag{1.2}  \label{0.2}
\end{equation}%
\begin{equation*}
u\left( 0,x\right) =u_{0}\left( x\right) ,\quad u\left\vert \ _{\left( 0,T
\right] \times \partial \Omega }=0\right. .
\end{equation*}%
Here, compact inclusion of subset $M$ to the space $L^{p}\left( \Omega
\right) $ and also necessary compactness theorem for analysis of the
mentioned parabolic problem were proved. Later on, different new subsets of $%
L^{1}\left( \Omega \right) $ appeared in the articles of Soltanov (see, e.
g. \cite{S3}, \cite{S4}, \cite{S5}) while studying the mixed problem for the
following nonlinear equation which is type of the Prandtl-von-Mises equation 
\begin{equation}
\frac{\partial u}{\partial t}-\left\vert u\right\vert ^{\rho }\frac{\partial
^{2}u}{\partial x^{2}}=h\left( t,x\right) ,\ \rho >0,\quad \left( t,x\right)
\in \left( 0,T\right) \times \Omega   \tag{1.3}  \label{0.3}
\end{equation}%
For example, one of the emerged class in the case of $\Omega =\left(
a,b\right) \subset \mathbb{R}$ can be expressed in the form 
\begin{equation*}
\left\{ u\in L^{1}\left( \Omega \right) \left\vert \ \int\limits_{\Omega
}\left\vert u\right\vert ^{\alpha }\left\vert D^{2}u\right\vert ^{\beta
}dx<\infty ,u\left( a\right) =u\left( b\right) =0\right. \right\} ,
\end{equation*}%
and also as type of subsets in the form $S_{m,\alpha ,\beta }\left( \Omega
\right) $. Here, we specify that different problems for the equation (\ref%
{0.3}) were studied under various additional conditions as well (see, e. g. 
\cite{O}, \cite{Sa}, \cite{L-P}, \cite{Ts-I}, \cite{Wa}, \cite{Wi}).

Accordingly, in the papers (\cite{S4}, \cite{S5}, etc.) different classes of
sets of this type were examined and it was shown that these sets are
nonlinear topological spaces, moreover they are either metric or
pseudo-metric spaces. Many other properties of the introduced spaces were
investigated as well in these works. For instance, relations of these spaces
amongst themselves and with well known functional spaces (e. g. Lebesgue or
Sobolev spaces etc).

Consequently, in the mentioned works pn-spaces and qn-spaces were defined
with taking into account the principal attributes of the presented spaces.

These spaces may arise from the research of the existence of smooth solution
of the following differential equation 
\begin{equation*}
-\Delta u+u+\left\vert u\right\vert ^{p}u=h\left( x\right) ,\text{ }x\in
\Omega \subset \mathbb{R}^{n},\text{ }n\geq 2
\end{equation*}%
\begin{equation*}
\left( \frac{\partial u}{\partial \eta }+\left\vert u\right\vert ^{\mu
}u\right) \left\vert \ _{\partial \Omega }\right. =\psi \left( x^{\prime
}\right) ,\text{ }x^{\prime }\in \partial \Omega ,\text{ }p,\mu \geq 0
\end{equation*}%
which was studied by Soltanov (\cite{S12}). We emphasize that equation of
this form was considered by many authors who tried to answer various
questions of different problems for this equation, (see, e.g. Berestycki ve
Nirenberg \cite{Be-N}, Brezis \cite{Br}, etc.). In \cite{P}, Pohozaev
employed another approach for this problem that led to gaining distinct
results other than \cite{S12}.

This kind of nonlinear spaces are generated by the differential equations
which ensue from the mathematical models of some processes in flood
mechanics. For an example, we may present the nonlinear equation of type 
\begin{equation*}
\frac{\partial u}{\partial t}-\left\vert u\right\vert ^{p-2}\Delta u=h\left(
x,t\right) ,\text{ }p\geq 2
\end{equation*}%
in where this equation were studied \cite{S4, S11} and \cite{S-A}. Similar
equations were handled by Oleynik \cite{O}, Walter \cite{Wa} only using the
approximation way and Tsutsumi, Ishiwata (\cite{Ts-I}) focused on
understanding the behavior of the solution.

In recent years, there have been an increasing interest in the study of
equations with variable exponents of nonlinearities. The interest in the
study of differential equations that involves variable exponents is
motivated by their applications to the theory of elasticity and
hydrodynamics, in particular the models of electrorheological fluids \cite%
{RUZ} in which a substantial part of viscous energy, the thermistor problem 
\cite{Z1}, image processing \cite{CLR} and modeling of non-Newtonian fluids
with thermo-convective effects \cite{AR} etc.

In the most of these papers that concern with equations which have non
standard growth, authors studied the problems which involve $p(.)$-Laplacian
type equation or equations which fulfill monotonicity conditions where
enable to apply monotonicity methods. Unlike these works, in the articles 
\cite{S-S-1,S-S-2} by investigating some properties of nonlinear spaces with
variable exponent, we developed an approach based on the spaces
corresponding to problem under consideration. It is necessary to note that
the questions mentioned above may arise for the problems which have variable
exponent nonlinearity. Eventually, here we also study variable exponent
nonlinear spaces that are essential for the investigation of the following
type of equations: 
\begin{equation*}
\nabla \cdot \left[ \left( \left\vert \nabla u\right\vert ^{p_{0}\left(
x\right) -2}+\left\vert u\right\vert ^{p_{1}\left( x\right) -2}\right)
\nabla u\right] =h\left( x,u\right) .
\end{equation*}

Since we want to establish the regularity of solution of the nonlinear
differential equations related with mentioned pn-spaces, thus our aim is to
understand the structure and nature of these spaces better that allows to
investigate the characteristics of solutions. For this reason, in this
article we prove some embedding results which indicate the relation of these
spaces between Sobolev and Lebesgue spaces. We show that these spaces are
not merely subsets of Lebesgue spaces also subsets of Sobolev spaces.

This paper is organized as follows: In the next section, we give the
definitions of pn-spaces with variable and constant exponents as well as
recall some basic results for these spaces and variable exponent spaces. In
Section 3, we prove embedding theorems for constant exponent pn-spaces and
give certain results with examples in one dimensional case. In Section 4
firstly, we establish some integral inequalities with variable exponents
which are required to prove embedding theorems of variable exponent
nonlinear spaces then investigate some attributes of variable exponent
pn-spaces.

\section{Preliminaries}

In this section, first we remind certain integral inequalities and facts
about the functional pn-spaces with constant exponent that are concerned in
this paper (for general case see \cite{S1} - \cite{S5} and for functional
case \cite{S1}, \cite{S5}, \cite{S7} etc).

Let $\Omega \subset \mathbb{R}^{n}\left( n\geq 1\right) $ be a bounded
domain with Lipschitz boundary $\partial \Omega.$ (Throughout the paper, we
denote by $\left\vert \Omega \right\vert $ the Lebesgue measure of $\Omega $
).

\begin{lemma}
\label{sbt1} Let $\alpha \geq 0,$ $\beta \geq 1$, $\left\vert \Omega
\right\vert<\infty $ and $i=\overline{1,n}$, then for all $u\in C(\bar{\Omega%
})\cap C^{1}(\Omega )$ the inequality 
\begin{equation}
\int\limits_{\Omega }\left\vert u\right\vert ^{\alpha +\beta }dx\leq
C_{1}\int\limits_{\Omega }\left\vert u\right\vert ^{\alpha }\left\vert
D_{i}u\right\vert ^{\beta }dx+C_{2}\int\limits_{\partial \Omega }\left\vert
u\right\vert ^{\alpha +\beta }dx^{\prime}  \tag{2.1}
\end{equation}
is satisfied. Here, $C_{1}=C_{1}\left( \alpha ,\beta ,\left\vert \Omega
\right\vert\right), C_{2}=C_{2}\left( \left\vert \Omega \right\vert\right)
>0 $ are constants.
\end{lemma}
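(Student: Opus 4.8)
The plan is to reduce the $n$-dimensional statement to a one-dimensional integration-by-parts estimate in the $x_i$ direction and then integrate the result over the remaining variables. Fix the index $i$; after relabeling assume $i=n$, and write $x=(x',x_n)$ with $x'\in\mathbb{R}^{n-1}$. Since $\Omega$ is bounded with Lipschitz boundary, for a.e. $x'$ the slice $\Omega_{x'}:=\{x_n:(x',x_n)\in\Omega\}$ is a finite union of open intervals, each of length at most $d:=\mathrm{diam}(\Omega)$. On a single such interval $(a,b)$ I would start from the identity
\[
|u(x',t)|^{\alpha+\beta}=|u(x',a)|^{\alpha+\beta}+\int_a^t \frac{d}{ds}\,|u(x',s)|^{\alpha+\beta}\,ds,
\]
valid because $u\in C(\bar\Omega)\cap C^1(\Omega)$, and bound $\bigl|\tfrac{d}{ds}|u|^{\alpha+\beta}\bigr|\le(\alpha+\beta)|u|^{\alpha+\beta-1}|D_n u|$. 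Integrating in $t$ over $(a,b)$ gives
\[
\int_a^b |u(x',t)|^{\alpha+\beta}\,dt\le (b-a)\,|u(x',a)|^{\alpha+\beta}+(b-a)(\alpha+\beta)\int_a^b |u|^{\alpha+\beta-1}|D_n u|\,ds .
\]

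The key analytic step is to absorb the middle factor $|u|^{\alpha+\beta-1}$ back into the left-hand side. Apply Young's inequality to the last integrand, $|u|^{\alpha+\beta-1}|D_n u|=\bigl(|u|^{\alpha/\beta}|D_nu|\bigr)\cdot|u|^{\alpha+\beta-1-\alpha/\beta}$, with the conjugate pair $\beta$ and $\beta/(\beta-1)$; since $(\alpha+\beta-1-\alpha/\beta)\cdot\frac{\beta}{\beta-1}=\alpha+\beta$, this yields for any $\varepsilon>0$
\[
|u|^{\alpha+\beta-1}|D_n u|\le \varepsilon\,|u|^{\alpha}|D_n u|^{\beta}+C(\varepsilon,\alpha,\beta)\,|u|^{\alpha+\beta}.
\]
Choosing $\varepsilon$ and then the length scale so that the coefficient of $|u|^{\alpha+\beta}$ on the right is strictly less than $1$ — here one uses $b-a\le d<\infty$, and if necessary first partitions each slice-interval into subintervals of a fixed small length $\delta=\delta(\alpha,\beta,d)$ — lets me move that term to the left and obtain
\[
\int_a^b |u(x',t)|^{\alpha+\beta}\,dt\le C_1(\alpha,\beta,|\Omega|)\int_a^b |u|^{\alpha}|D_n u|^{\beta}\,ds + C_2(|\Omega|)\,|u(x',a)|^{\alpha+\beta},
\]
where $|u(x',a)|^{\alpha+\beta}$ is the value at an endpoint lying on $\partial\Omega$ (the interior partition endpoints telescope and are controlled by the same interior integral when one sums the subinterval estimates).

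Finally I would integrate this inequality in $x'$ over the projection of $\Omega$. The first term on the right becomes $C_1\int_\Omega |u|^\alpha|D_nu|^\beta\,dx$. For the boundary term, the set of endpoints $(x',a)$ as $x'$ ranges over the projection sweeps out (the non-vertical part of) $\partial\Omega$; using the Lipschitz parametrization of $\partial\Omega$, the change of variables from $dx'$ to the surface measure $dx'_{\partial\Omega}$ introduces only a bounded Jacobian factor depending on the Lipschitz constant, hence on $\Omega$, so $\int |u(x',a)|^{\alpha+\beta}\,dx'\le C\int_{\partial\Omega}|u|^{\alpha+\beta}\,dx'_{\partial\Omega}$. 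Collecting terms gives (2.1) with the asserted dependence of the constants. The main obstacle I anticipate is purely bookkeeping about the geometry of a Lipschitz domain — handling slices with several components, making the endpoint/telescoping argument clean, and bounding the Jacobian of the boundary parametrization — rather than any analytic difficulty; the Young-inequality absorption is the only genuinely quantitative point and is straightforward once the exponents are checked.
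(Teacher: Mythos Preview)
The paper does not actually prove Lemma~\ref{sbt1}; it is stated in Section~2 as a known inequality ``reminded'' from earlier work of Soltanov (references \cite{S1}--\cite{S5}), so there is no in-paper argument to compare against. Judging by the prominence the paper gives to the map $g(t)=|t|^{\alpha/\beta}t$ (Theorem~\ref{sbthom}), the intended proof almost certainly goes through the substitution $v=|u|^{\alpha/\beta}u$, which turns $|u|^{\alpha+\beta}$ into $|v|^{\beta}$ and $|u|^{\alpha}|D_i u|^{\beta}$ into $c_{\alpha,\beta}|D_i v|^{\beta}$, reducing (2.1) to the elementary one-directional Poincar\'e/Friedrichs inequality $\int_\Omega |v|^\beta\le C\int_\Omega |D_i v|^\beta + C'\int_{\partial\Omega}|v|^\beta$, proved exactly by your slicing argument with the fundamental theorem of calculus and H\"older.

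Your overall scheme (slice, one-dimensional estimate, integrate in $x'$, control the endpoint term by the surface measure via the Lipschitz graph) is correct. The weak point is the absorption step. Applying Young's inequality \emph{pointwise} to $|u|^{\alpha+\beta-1}|D_n u|$ produces a term $C(\varepsilon)\int_a^b|u|^{\alpha+\beta}$ whose coefficient after integrating in $t$ is $(b-a)(\alpha+\beta)C(\varepsilon)$; since $C(\varepsilon)\to\infty$ as $\varepsilon\to 0$, you cannot make this small without shrinking $b-a$. Your partition-into-subintervals remedy then leaves interior endpoint values $|u(x',c_k)|^{\alpha+\beta}$ which do not literally telescope; bounding each one by the boundary value plus the full interior integral reintroduces a factor $N\sim (b-a)/\delta$ that cancels the gain. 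One can push this through by an inductive iteration from subinterval to subinterval, but the constants become exponential and the argument is unpleasant. The clean fix is either to use the substitution $v=|u|^{\alpha/\beta}u$ as above, or---staying closer to your write-up---to apply H\"older first,
\[
\int_a^b |u|^{\alpha+\beta-1}|D_n u|\,ds \le \Bigl(\int_a^b |u|^{\alpha}|D_n u|^{\beta}\Bigr)^{1/\beta}\Bigl(\int_a^b |u|^{\alpha+\beta}\Bigr)^{(\beta-1)/\beta},
\]
and \emph{then} Young's inequality with parameter to this product of integrals; now the $\int|u|^{\alpha+\beta}$ term appears with a coefficient you can make $<1$ directly, with no partition needed. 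With that change your proof goes through.
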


\begin{lemma}
\label{sbt2} Assume that $\alpha, \alpha _{1}\geq 0$, $\beta \geq 1$ and $%
\beta >\beta _{1}>0,$ $\frac{\alpha _{1}}{\beta _{1}}\geq \frac{\alpha }{%
\beta },$ $\alpha _{1}+\beta _{1}\leq \alpha +\beta $ be satisfied. Then for 
$u\in C(\bar{\Omega})\cap C^{1}(\Omega )$ 
\begin{equation}
\int\limits_{\Omega }\left\vert u\right\vert ^{\alpha _{1}}\left\vert
D_{i}u\right\vert ^{\beta _{1}}dx\leq C_{3}\int\limits_{\Omega }\left\vert
u\right\vert ^{\alpha }\left\vert D_{i}u\right\vert ^{\beta
}dx+C_{4}\int\limits_{\partial \Omega }\left\vert u\right\vert ^{\alpha
+\beta }dx^{\prime }+C_{5}  \tag{2.2}
\end{equation}%
holds. Here, for $r=3, 4, 5$, $C_{r}=C_{r}\left( \alpha, \beta, \alpha _{1},
\beta _{1}, \left\vert \Omega \right\vert\right)>0$ are constants.
\end{lemma}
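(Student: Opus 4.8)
The strategy is to reduce the left-hand integral to the right-hand one by an application of Young's inequality in the integrand, exploiting the hypotheses $\beta>\beta_{1}>0$ and $\frac{\alpha_{1}}{\beta_{1}}\geq\frac{\alpha}{\beta}$ so that the term $|u|^{\alpha}|D_{i}u|^{\beta}$ is produced with a favorable exponent, and then controlling the leftover power of $|u|$ by Lemma~\ref{sbt1}. First I would split the integrand as
\[
|u|^{\alpha_{1}}|D_{i}u|^{\beta_{1}}
=\Bigl(|u|^{\frac{\alpha\beta_{1}}{\beta}}|D_{i}u|^{\beta_{1}}\Bigr)\cdot|u|^{\alpha_{1}-\frac{\alpha\beta_{1}}{\beta}},
\]
which is legitimate since $\alpha_{1}-\frac{\alpha\beta_{1}}{\beta}\geq 0$ by hypothesis. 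To the first factor I apply Young's inequality with exponents $p=\beta/\beta_{1}>1$ and $p'=\beta/(\beta-\beta_{1})$: the $p$-th power of the first factor is exactly $|u|^{\alpha}|D_{i}u|^{\beta}$, while the $p'$-th power of the second factor is a pure power $|u|^{\gamma}$ with $\gamma=\frac{\beta}{\beta-\beta_{1}}\bigl(\alpha_{1}-\frac{\alpha\beta_{1}}{\beta}\bigr)$.

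The next step is to estimate $\int_{\Omega}|u|^{\gamma}\,dx$. A short computation using $\alpha_{1}+\beta_{1}\leq\alpha+\beta$ shows $\gamma\leq\alpha+\beta$; indeed $\gamma=\frac{\beta\alpha_{1}-\beta_{1}\alpha}{\beta-\beta_{1}}$, and the inequality $\gamma\leq\alpha+\beta$ is equivalent to $\beta\alpha_{1}-\beta_{1}\alpha\leq(\alpha+\beta)(\beta-\beta_{1})$, i.e. to $\alpha_{1}+\beta_{1}\leq\alpha+\beta$ after simplification. Since $|\Omega|<\infty$, on the set where $|u|\leq 1$ we bound $|u|^{\gamma}\leq 1$, contributing $|\Omega|$ (this is the source of the additive constant $C_{5}$); on the set where $|u|>1$ we bound $|u|^{\gamma}\leq|u|^{\alpha+\beta}$, and then invoke Lemma~\ref{sbt1} to dominate $\int_{\Omega}|u|^{\alpha+\beta}\,dx$ by $C_{1}\int_{\Omega}|u|^{\alpha}|D_{i}u|^{\beta}\,dx+C_{2}\int_{\partial\Omega}|u|^{\alpha+\beta}\,dx'$. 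Collecting terms and renaming constants yields \eqref{2.2} with $C_{3},C_{4},C_{5}$ depending only on $\alpha,\beta,\alpha_{1},\beta_{1},|\Omega|$.

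The one genuinely delicate point is the borderline case $\alpha_{1}=\frac{\alpha\beta_{1}}{\beta}$, where the second factor above is identically $1$; then no boundary term or additive constant is even needed and Young's inequality applied directly gives the bound, so this case is in fact easier and causes no trouble. I expect the main obstacle to be purely bookkeeping: verifying carefully that all the exponents appearing ($p$, $p'$, $\gamma$, and the exponent $\alpha_{1}-\frac{\alpha\beta_{1}}{\beta}$ of the residual power) are in the admissible ranges dictated by the hypotheses, and tracking how the constant from Lemma~\ref{sbt1} propagates through Young's inequality. No compactness, density, or approximation argument is required here since the statement is already restricted to $u\in C(\bar\Omega)\cap C^{1}(\Omega)$; the proof is entirely a pointwise-in-$x$ inequality followed by integration and one appeal to the previous lemma.
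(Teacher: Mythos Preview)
Your argument is correct. The splitting
\[
|u|^{\alpha_{1}}|D_{i}u|^{\beta_{1}}
=\bigl(|u|^{\alpha\beta_{1}/\beta}|D_{i}u|^{\beta_{1}}\bigr)\cdot|u|^{\alpha_{1}-\alpha\beta_{1}/\beta},
\]
followed by Young's inequality with exponent $\beta/\beta_{1}$ and an appeal to Lemma~\ref{sbt1} for the residual power $|u|^{\gamma}$, is exactly the right mechanism; your verification that $0\le\gamma\le\alpha+\beta$ under the stated hypotheses is accurate, and the bookkeeping on the constants is routine.

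As for comparison: the paper does \emph{not} give a proof of Lemma~\ref{sbt2}. It is listed in the preliminaries as a recalled inequality, with references to the earlier works \cite{S1}--\cite{S5}, \cite{S7} for the argument. So there is no in-paper proof to match against. That said, the route you take---Young's inequality to separate the mixed term into $|u|^{\alpha}|D_{i}u|^{\beta}$ plus a pure power of $|u|$, then Lemma~\ref{sbt1} to absorb the pure power---is the canonical one and is consistent with how the paper itself uses these lemmas elsewhere (e.g., the proof of Theorem~\ref{sbtgom1} applies exactly the same Young-then-embed pattern). Your handling of the borderline case $\alpha_{1}=\alpha\beta_{1}/\beta$ is also fine; the second factor degenerates to $1$ and the inequality follows directly without the boundary or additive terms.
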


\begin{lemma}
\label{sbt3} Let $\alpha \geq 0,$ $\beta _{0}+\beta _{1}\geq 2$ and $\beta
_{1}\geq \beta _{0}\geq 0$ be fulfilled. Then for all $u\in C^{1}\left(\bar{%
\Omega}\right)\cap C^{2}\left( \Omega \right)$ 
\begin{align*}
\int\limits_{\Omega }\left\vert u\right\vert ^{\alpha }\left\vert
D_{i}u\right\vert ^{\beta _{0}+\beta _{1}}dx&\leq C_{6}\int\limits_{\Omega
}\left\vert u\right\vert ^{\alpha +\beta _{0}}\left\vert
D_{i}^{2}u\right\vert ^{\beta _{1}}dx \\
&+C_{7}\int\limits_{\partial \Omega }(\left\vert u\right\vert ^{\alpha
+\beta _{0}+\beta _{1}}+\left\vert u\right\vert ^{\alpha +1}\left\vert
D_{i}u\right\vert ^{\beta _{0}+\beta _{1}-1})dx^{\prime}  \tag{2.3}
\end{align*}
holds. Here, for $j=6, 7$, $C_{j}=C_{j}\left( \alpha, \beta, \beta
_{0}\right)>0$ are constants.
\end{lemma}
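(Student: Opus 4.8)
The plan is to reduce the second‑order estimate to the first‑order inequality of Lemma \ref{sbt1} (or to a direct integration by parts) applied to the function $v := |u|^{\sigma}$ for a suitable exponent $\sigma$, since $D_i v$ then carries the factor $|u|^{\sigma-1}|D_i u|$ and $D_i^2 v$ produces the mixed term $|u|^{\sigma-1}|D_i^2 u|$ together with a lower‑order term $|u|^{\sigma-2}|D_i u|^2$. Concretely, I would fix the index $i$, write everything as an iterated integral with the inner integration in the $x_i$ variable over the corresponding section of $\Omega$, and integrate by parts once in $x_i$ on $\int |u|^{\alpha}|D_i u|^{\beta_0+\beta_1}\,dx$, differentiating the block $|u|^{\alpha}|D_i u|^{\beta_0+\beta_1-1}$ and integrating $D_i u$. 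This is legitimate because $u\in C^1(\bar\Omega)\cap C^2(\Omega)$, so $|D_iu|^{\beta_0+\beta_1-1}$ is $C^1$ in the interior whenever $\beta_0+\beta_1\ge 2$ — exactly the hypothesis imposed — and the boundary term of the integration by parts is precisely $\int_{\partial\Omega}|u|^{\alpha+1}|D_iu|^{\beta_0+\beta_1-1}\,dx'$ up to a constant.

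After integrating by parts, the interior contribution splits into two pieces: one proportional to $\int_\Omega |u|^{\alpha}|D_iu|^{\beta_0+\beta_1-1}\,\mathrm{sgn}(u)\,|u|^{?}$-type term that regenerates $\int_\Omega |u|^{\alpha-1}|D_iu|^{\beta_0+\beta_1+1}\,dx$ (from differentiating $|u|^\alpha$), and the genuinely new piece $\int_\Omega |u|^{\alpha+1}|D_iu|^{\beta_0+\beta_1-2}|D_i^2u|\,dx$ (from differentiating $|D_iu|^{\beta_0+\beta_1-1}$). The second piece is the one we want: I would bound it by Young's inequality with exponents chosen so that the factor $|D_i^2u|$ appears to the power $\beta_1$, i.e. split $|u|^{\alpha+1}|D_iu|^{\beta_0+\beta_1-2}|D_i^2u|$ as the product of $\big(|u|^{\alpha+\beta_0}|D_i^2u|^{\beta_1}\big)^{1/\beta_1}$ and a remainder, and absorb the remainder. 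The exponents work out precisely because $\beta_1\ge\beta_0\ge 0$ guarantees $\beta_0+\beta_1-2\ge \beta_0(1-1/\beta_1)\cdot(\cdots)$ in the right direction; checking this bookkeeping is routine Hölder/Young arithmetic.

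The first piece, $\int_\Omega |u|^{\alpha-1}|D_iu|^{\beta_0+\beta_1+1}\,dx$, has one more derivative power than what we started with, so it cannot simply be absorbed — this is the main obstacle. The remedy is an interpolation step: apply Young's inequality in the form $|u|^{\alpha-1}|D_iu|^{\beta_0+\beta_1+1} \le \varepsilon\, |u|^{\alpha-2}|D_iu|^{\beta_0+\beta_1+2}\cdot(\cdots) + C_\varepsilon |u|^{\alpha}|D_iu|^{\beta_0+\beta_1}$, or more efficiently iterate the integration by parts / use Lemma \ref{sbt2} to trade the higher power of $|D_iu|$ against the already‑controlled left‑hand side plus the mixed term, choosing $\varepsilon$ small enough to move the offending integral back to the left. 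Once the interior terms are under control, all boundary terms that have been generated are of the two types listed in (2.3), so collecting constants $C_6,C_7$ depending only on $\alpha,\beta_0,\beta_1$ (written $\alpha,\beta,\beta_0$ in the statement) finishes the argument. I expect the delicate point to be verifying that the Young exponents in the absorption step stay admissible throughout the range $\beta_1\ge\beta_0\ge 0$, $\beta_0+\beta_1\ge 2$, including the degenerate endpoints $\beta_0=0$ and $\beta_0+\beta_1=2$.
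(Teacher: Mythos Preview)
The paper does not give its own proof of Lemma~\ref{sbt3}: the inequality is stated in the preliminaries section as a known result, with a blanket reference to the author's earlier work (\cite{S1}--\cite{S5}, \cite{S7}). So there is no in-paper argument to compare against, and I can only comment on your sketch on its own merits.

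Your overall strategy --- integrate by parts once in $x_i$, then control the resulting mixed term $\int_\Omega |u|^{\alpha+1}|D_iu|^{\beta_0+\beta_1-2}|D_i^2u|\,dx$ by Young's inequality --- is the standard route and does lead to (2.3). However, your identification of the ``main obstacle'' is wrong. When you integrate $D_iu$ to $u$ and differentiate the block $|u|^{\alpha}|D_iu|^{\beta_0+\beta_1-2}D_iu$, the piece coming from $D_i|u|^{\alpha}=\alpha|u|^{\alpha-2}u\,D_iu$ produces, after multiplication by the integrated factor $u$, exactly $\alpha\int_\Omega|u|^{\alpha}|D_iu|^{\beta_0+\beta_1}\,dx=\alpha I$, \emph{not} a term with $|D_iu|^{\beta_0+\beta_1+1}$. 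This term simply moves to the left with the good coefficient $(1+\alpha)$; there is nothing to absorb here and the interpolation/iteration you propose for it is unnecessary.

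The genuine bookkeeping issue is elsewhere, in the Young step for the mixed term. Splitting $|u|^{\alpha+1}|D_iu|^{\beta_0+\beta_1-2}|D_i^2u|$ with exponents $(\beta_1,\beta_1/(\beta_1-1))$ isolates $|u|^{\alpha+\beta_0}|D_i^2u|^{\beta_1}$ but leaves a remainder $|u|^{a'}|D_iu|^{b'}$ with $a'+b'=\alpha+\beta_0+\beta_1$ and $b'<\beta_0+\beta_1$ whenever $\beta_1>\beta_0$, so it does not directly match $I$. This is fixed by a second Young inequality,
\[
|u|^{a'}|D_iu|^{b'}\le \varepsilon\,|u|^{\alpha}|D_iu|^{\beta_0+\beta_1}+C_\varepsilon\,|u|^{\alpha+\beta_0+\beta_1},
\]
followed by Lemma~\ref{sbt1} applied to $\int_\Omega|u|^{\alpha+\beta_0+\beta_1}\,dx$; this is precisely where the boundary term $\int_{\partial\Omega}|u|^{\alpha+\beta_0+\beta_1}\,dx'$ in (2.3) enters. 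With this correction (and the endpoint cases $\beta_0+\beta_1=2$, $\beta_0=\beta_1$ handled separately, both trivial), your outline goes through.
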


\begin{definition}
\label{tansbt} Let $\alpha \geq 0,$ $\beta \geq 1$, $\mathbf{k}=\left(
k_{1},...,k_{n}\right) $ is multi-index and $\left\vert \mathbf{k}%
\right\vert =\sum\limits_{i=1}^{n}k_{i}$, $m\in \mathbb{Z}^{+},$ $\Omega
\subset \mathbb{R}^{n}\left( n\geq 1\right) $ is bounded domain with
sufficiently smooth boundary (at least Lipschitz boundary) 
\begin{equation*}
S_{m,\alpha ,\beta }\left( \Omega \right) := \left\{ u\in L^{1}\left( \Omega
\right) \left\vert \ \left[ u\right] _{S_{m,\alpha ,\beta }\left( \Omega
\right) }^{\alpha +\beta }\equiv \sum_{0\leq \left\vert \mathbf{k}%
\right\vert \leq m}\left( \int\limits_{\Omega }\left\vert u\right\vert
^{\alpha }\left\vert D^{\mathbf{k}}u\right\vert ^{\beta }dx\right) <\infty
\right. \right\}
\end{equation*}%
and 
\begin{equation*}
\mathring{S}_{m,\alpha ,\beta }\left( \Omega \right) :=S_{m,\alpha ,\beta
}\left( \Omega \right) \cap \left\{ D^{\mathbf{k}}u\left\vert \ _{\partial
\Omega }\right. \equiv 0,\ 0\leq \left\vert \mathbf{k}\right\vert \leq
m_{0}<m\right\} .
\end{equation*}
\end{definition}

We state a proposition which can be easily proved by the help of Lemma \ref%
{sbt1}-Lemma \ref{sbt3} and Definition \ref{tansbt}.

\begin{proposition}
Assume that $\alpha \geq 0,$ $\beta \geq 1$ then we have the following
equivalence; 
\begin{equation*}
\mathring{S}_{1,\alpha ,\beta }\left( \Omega \right):= \left\{ u\in
L^{1}\left( \Omega \right) \mid \left[ u\right] _{S_{1,\alpha ,\beta }\left(
\Omega \right) }^{\alpha +\beta }\equiv \sum_{i=1}^{n}\left(
\int\limits_{\Omega }\left\vert u\right\vert ^{\alpha }\left\vert
D_{i}u\right\vert ^{\beta }dx\right) <\infty \right\}
\end{equation*}
and \footnote{$S_{1,\alpha ,\beta }\left( \Omega \right) $ is a complete
metric space with the following metric: $\forall u,v\in S_{1,\alpha ,\beta
}\left( \Omega \right) $%
\par
\begin{equation*}
d_{S_{1,\alpha ,\beta }}\left( u,v\right) =\left\Vert \left\vert
u\right\vert ^{\frac{\alpha }{\beta }}u-\left\vert v\right\vert ^{\frac{%
\alpha }{\beta }}v\right\Vert _{W^{1,\beta }\left( \Omega \right) }
\end{equation*}%
} 
\begin{equation*}
\mathring{S}_{2,\alpha ,\beta }\left( \Omega \right):= \left\{ u\in
L^{1}\left( \Omega \right) \mid \left[ u\right] _{S_{2,\alpha ,\beta }\left(
\Omega \right) }^{\alpha +\beta }\equiv \sum_{i=1}^{n}\left(
\int\limits_{\Omega }\left\vert u\right\vert ^{\alpha }\left\vert
D_{i}^{2}u\right\vert^{\beta }dx\right) <\infty \right\}
\end{equation*}
\end{proposition}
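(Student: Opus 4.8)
The plan is to prove each equivalence by showing that, once the vanishing boundary conditions built into $\mathring{S}_{m,\alpha,\beta}(\Omega)$ are imposed, every term of the full sum $[u]_{S_{m,\alpha,\beta}(\Omega)}^{\alpha+\beta}=\sum_{0\le|\mathbf{k}|\le m}\int_\Omega|u|^\alpha|D^{\mathbf{k}}u|^\beta\,dx$ from Definition \ref{tansbt} that does \emph{not} already appear on the right-hand side is dominated by the terms that do appear, up to boundary integrals that vanish because $D^{\mathbf{k}}u|_{\partial\Omega}=0$. One inclusion is immediate: the right-hand sums are sub-sums of $[u]_{S_{m,\alpha,\beta}(\Omega)}^{\alpha+\beta}$, so $\mathring{S}_{m,\alpha,\beta}(\Omega)$ sits inside the corresponding set on the right. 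For the reverse I would argue first for $u\in C^{m-1}(\bar\Omega)\cap C^m(\Omega)$ vanishing (together with its admissible derivatives) on $\partial\Omega$, so that Lemmas \ref{sbt1}--\ref{sbt3} apply literally, and then extend to a general $u\in L^1(\Omega)$ in the right-hand set by the usual density/approximation argument. It is worth noting at the outset that in every boundary integral occurring in Lemmas \ref{sbt1} and \ref{sbt3} each summand carries a strictly positive power of $|u|$, so the single condition $u|_{\partial\Omega}=0$ suffices to kill all of them.

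For $m=1$: assuming $\sum_{i=1}^{n}\int_\Omega|u|^\alpha|D_iu|^\beta\,dx<\infty$, apply Lemma \ref{sbt1} for one fixed index $i$ and discard the (vanishing) boundary term, obtaining $\int_\Omega|u|^{\alpha+\beta}\,dx\le C_1\sum_{i=1}^{n}\int_\Omega|u|^\alpha|D_iu|^\beta\,dx<\infty$. Thus the missing $\mathbf{k}=0$ term of $[u]_{S_{1,\alpha,\beta}(\Omega)}^{\alpha+\beta}$ is finite, which is precisely $u\in\mathring{S}_{1,\alpha,\beta}(\Omega)$. Conceptually this is nothing but Poincar\'e's inequality in $W_0^{1,\beta}(\Omega)$ applied to $w:=|u|^{\alpha/\beta}u$, for which $\|w\|_{W^{1,\beta}(\Omega)}^\beta$ equals, up to a constant, $[u]_{S_{1,\alpha,\beta}(\Omega)}^{\alpha+\beta}$ (cf. the footnote).

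For $m=2$: assuming $\sum_{i=1}^{n}\int_\Omega|u|^\alpha|D_i^2u|^\beta\,dx<\infty$, I must recover the terms with $|\mathbf{k}|\le1$ and the mixed terms $\int_\Omega|u|^\alpha|D_iD_ju|^\beta\,dx$, $i\ne j$. First, Lemma \ref{sbt3} with $\beta_0=0$, $\beta_1=\beta$ (admissible when $\beta\ge2$; the case $1\le\beta<2$ needs an extra step, see below) gives, after dropping the boundary terms, $\int_\Omega|u|^\alpha|D_iu|^\beta\,dx\le C_6\int_\Omega|u|^\alpha|D_i^2u|^\beta\,dx<\infty$; the $\mathbf{k}=0$ term is then finite by Lemma \ref{sbt1} exactly as in the case $m=1$. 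For the mixed terms I would integrate by parts twice, in $x_i$ and in $x_j$, inside $\int_\Omega|u|^\alpha|D_iD_ju|^\beta\,dx$, so as to bound it by a constant times $\int_\Omega|u|^\alpha(|D_i^2u|^\beta+|D_j^2u|^\beta)\,dx$ (via H\"older's inequality) plus boundary contributions, which vanish since each is proportional to a power of $|u|$, plus error terms containing $D(|u|^\alpha)$ --- that is, products of first-order derivatives --- which are absorbed by Young's inequality once quantities of the type $\int_\Omega|u|^{\alpha-\beta}|D_iu|^{2\beta}\,dx$ are controlled by $\int_\Omega|u|^\alpha|D_i^2u|^\beta\,dx$ through Lemma \ref{sbt3} with $\beta_0=\beta_1=\beta$. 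Collecting these estimates gives $u\in\mathring{S}_{2,\alpha,\beta}(\Omega)$, completing the equivalence.

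I expect the main obstacle to be the mixed second-order derivatives $D_iD_ju$ with $i\ne j$: none of Lemmas \ref{sbt1}--\ref{sbt3} bounds them directly, so one is pushed into the integration-by-parts computation above, where the bookkeeping is delicate --- every boundary term that arises must be kept proportional to a positive power of $|u|$ so that it is annihilated by $u|_{\partial\Omega}=0$, and every weight-derivative error term must be absorbed using only quantities already known to be finite. Two further, more routine, points need attention. First, the density step: Lemmas \ref{sbt1}--\ref{sbt3} are stated for classical functions, and one must check that an arbitrary $u\in L^1(\Omega)$ in the right-hand set has the relevant weak derivatives and is approximable, in the appropriate weighted norms, by smooth functions respecting the boundary conditions, so that the inequalities pass to the limit. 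Second, the sub-case $1\le\beta<2$ of the first-order estimate: here Lemma \ref{sbt3} cannot be used with $\beta_1=\beta$, and one instead controls the auxiliary quantity $\int_\Omega|u|^{\alpha-\beta}|D_iu|^{2\beta}\,dx$ by $\int_\Omega|u|^\alpha|D_i^2u|^\beta\,dx$ (Lemma \ref{sbt3}, $\beta_0=\beta_1=\beta$) and combines this with the substitution $w=|u|^{\alpha/\beta}u$ and an interior elliptic estimate to recover $\int_\Omega|u|^\alpha|D_iu|^\beta\,dx<\infty$.
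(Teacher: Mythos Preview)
Your approach is exactly what the paper has in mind: the paper gives no detailed argument, stating only that the proposition ``can be easily proved by the help of Lemma~\ref{sbt1}--Lemma~\ref{sbt3} and Definition~\ref{tansbt}.'' Your treatment of $m=1$ (Lemma~\ref{sbt1} with the boundary term annihilated by $u|_{\partial\Omega}=0$) and the first step for $m=2$ (Lemma~\ref{sbt3} to recover the first-order terms, then Lemma~\ref{sbt1} for the zero-order term) are precisely the intended applications, and your observation that every boundary integrand in Lemmas~\ref{sbt1} and~\ref{sbt3} carries a strictly positive power of $|u|$ is the key point that makes the single condition $u|_{\partial\Omega}=0$ sufficient.

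Where you go beyond the paper is in isolating the mixed second derivatives $D_iD_ju$, $i\neq j$, and the sub-range $1\le\beta<2$ as genuine obstacles: none of Lemmas~\ref{sbt1}--\ref{sbt3} addresses mixed derivatives, and the paper simply passes over both points in silence. Your integration-by-parts sketch for the mixed terms is plausible when $\beta=2$, but for general $\beta$ the step ``bound $\int_\Omega|u|^\alpha|D_iD_ju|^\beta\,dx$ by pure second derivatives via two integrations by parts and H\"older'' is not an elementary identity---it is essentially a weighted Calder\'on--Zygmund estimate, and your outline does not show how the bookkeeping closes when $\beta\neq 2$. This, together with the density/approximation step, is a gap in the paper itself rather than a defect peculiar to your proposal; your explicit identification of these issues is more careful than what the paper offers.
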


\begin{theorem}
\label{sbthom} Let $\alpha \geq 0,$ $\beta \geq 1$ then $g:\mathbb{R}%
\longrightarrow \mathbb{R},$ $g(t):= \left\vert t\right\vert ^{\frac{\alpha 
}{\beta }}t$ is an one to one correspondence from $S_{1,\alpha ,\beta
}(\Omega )$ onto $W^{1,\beta}(\Omega )$.
\end{theorem}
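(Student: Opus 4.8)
The plan is to show that the map $g(t)=|t|^{\alpha/\beta}t$ induces a bijection $S_{1,\alpha,\beta}(\Omega)\to W^{1,\beta}(\Omega)$ by verifying three things: (i) if $u\in S_{1,\alpha,\beta}(\Omega)$ then $v:=g\circ u\in W^{1,\beta}(\Omega)$; (ii) if $v\in W^{1,\beta}(\Omega)$ then $u:=g^{-1}\circ v\in S_{1,\alpha,\beta}(\Omega)$; and (iii) $g$ and $g^{-1}$ are mutually inverse as maps between these sets, which is immediate once (i) and (ii) are in place since $g$ is a bijection of $\mathbb{R}$ onto $\mathbb{R}$ with inverse $g^{-1}(s)=|s|^{-\alpha/(\alpha+\beta)}s$. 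The heart of the argument is the chain rule identity: formally, for $u$ smooth, $D_i\bigl(|u|^{\alpha/\beta}u\bigr)=\bigl(\tfrac{\alpha}{\beta}+1\bigr)|u|^{\alpha/\beta}D_iu$, whence pointwise
\begin{equation*}
\bigl|D_i v\bigr|^{\beta}=\Bigl(\tfrac{\alpha+\beta}{\beta}\Bigr)^{\beta}|u|^{\alpha}\,|D_i u|^{\beta},
\end{equation*}
and integrating and summing over $i$ gives $\|v\|_{W^{1,\beta}(\Omega)}^{\beta}=\|v\|_{L^{\beta}}^{\beta}+c\,[u]_{S_{1,\alpha,\beta}}^{\alpha+\beta}$ with $c=\bigl((\alpha+\beta)/\beta\bigr)^{\beta}$; note $\|v\|_{L^{\beta}}^{\beta}=\int_\Omega|u|^{\alpha+\beta}dx<\infty$ since $u\in L^1$ with $[u]_{S_{1,\alpha,\beta}}<\infty$ (using Lemma \ref{sbt1}, or simply that this term is one of the summands controlling membership). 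So at the level of smooth functions the correspondence is essentially an isometry up to the constant $c$, and the two norms are finite simultaneously.

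The main obstacle is making the chain rule rigorous at the level of weak derivatives, because $g$ is only $C^1$ when $\alpha/\beta\ge 0$ (which holds) but $g'(t)=(\alpha/\beta+1)|t|^{\alpha/\beta}$ may fail to be Lipschitz near $0$ when $\alpha/\beta<1$, and more seriously $g^{-1}$ has an unbounded derivative at the origin, so the standard chain rule for Sobolev functions (which requires a globally Lipschitz outer function) does not apply directly in direction (ii). I would handle direction (i) by a truncation/approximation argument: approximate $u\in S_{1,\alpha,\beta}(\Omega)$ by smooth functions (or by $u_\epsilon$ obtained by mollification, or mollify $g(u)$), apply the smooth identity, and pass to the limit using the finiteness of $\int|u|^\alpha|D_iu|^\beta dx$ to get $v\in W^{1,\beta}$ with the stated formula for $D_iv$ a.e. For direction (ii), given $v\in W^{1,\beta}(\Omega)$, I would first record that $v\in L^\beta(\Omega)\subset L^1(\Omega)$ hence $u=g^{-1}(v)\in L^1$ is at least well-defined and in $L^1$ (since $|u|=|v|^{\beta/(\alpha+\beta)}$ and $\beta/(\alpha+\beta)\le 1$ gives $|u|\le 1+|v|$), then regularize $v$ by $v_\delta\in C^\infty$, set $u_\delta=g^{-1}(v_\delta)$, apply the smooth identity in the form $|u_\delta|^\alpha|D_iu_\delta|^\beta=c^{-1}|D_iv_\delta|^\beta$, and pass to the limit; the only delicate point is justifying $u_\delta\to u$ and the convergence of the nonlinear quantities $|u_\delta|^\alpha|D_iu_\delta|^\beta$, for which I would use that $v_\delta\to v$ in $W^{1,\beta}$ together with the continuity of $g^{-1}$ and a.e. convergence along a subsequence plus Fatou for the lower bound and the exact identity for the reverse bound.

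An alternative, cleaner route is to bypass approximation by invoking Lemma \ref{sbt1}--Lemma \ref{sbt3} together with the observation already used in the footnote to the Proposition, namely that $d_{S_{1,\alpha,\beta}}(u,v)=\|g(u)-g(v)\|_{W^{1,\beta}}$ defines the (complete) metric on $S_{1,\alpha,\beta}(\Omega)$; granting that, the statement that $g$ is a bijection onto $W^{1,\beta}(\Omega)$ is almost a restatement of completeness and of the fact that $g$ is surjective onto $W^{1,\beta}$, the latter being exactly direction (ii) above. In the write-up I would state the pointwise chain-rule identity as the key computational lemma, prove the smooth case directly, then invoke the standard theorem on composition of $W^{1,\beta}$ functions with $C^1$ maps having bounded derivative on the range of $u$ (reducing to the bounded case via truncation of $u$ at height $N$ and letting $N\to\infty$, using monotone convergence of $\int_{\{|u|<N\}}|u|^\alpha|D_iu|^\beta dx$), and finally check injectivity, which is trivial since $g$ is strictly increasing on $\mathbb{R}$.
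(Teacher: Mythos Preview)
The paper does not actually prove this theorem: it is stated in the Preliminaries section as a recalled result from the author's earlier work (the section opens with ``we remind certain integral inequalities and facts\ldots\ see \cite{S1}--\cite{S5}''), so there is no proof in the present paper to compare against.

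That said, your approach is the natural one and is consistent with how the paper treats the variable-exponent analogue, Theorem~\ref{bijec}, likewise stated here without proof. The core is exactly the chain-rule identity $D_i\bigl(|u|^{\alpha/\beta}u\bigr)=\bigl(\tfrac{\alpha}{\beta}+1\bigr)|u|^{\alpha/\beta}D_iu$, which gives $|D_iv|^{\beta}=c\,|u|^{\alpha}|D_iu|^{\beta}$ and $|v|^{\beta}=|u|^{\alpha+\beta}$, so the defining integrals for $S_{1,\alpha,\beta}(\Omega)$ and $W^{1,\beta}(\Omega)$ are term-by-term equivalent. Your observation that the footnote's metric $d_{S_{1,\alpha,\beta}}(u,v)=\bigl\|\,|u|^{\alpha/\beta}u-|v|^{\alpha/\beta}v\,\bigr\|_{W^{1,\beta}}$ already presupposes this bijection is exactly right; the paper's placement of that footnote immediately before the theorem is tacit confirmation.

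Your handling of the weak-derivative issues is sound. Direction (i) is unproblematic since $g\in C^{1}(\mathbb{R})$ with $g'$ locally bounded, so truncation at height $N$ plus the standard chain rule for $C^{1}$ composition with Sobolev functions, followed by monotone convergence in $N$, works. Direction (ii) is the only genuinely delicate step because $g^{-1}$ has unbounded derivative at the origin; your regularization-plus-Fatou strategy is the standard remedy. One small simplification: rather than approximating $v$ and chasing convergence of the nonlinear quantities, you can argue directly that for $v\in W^{1,\beta}(\Omega)$ the function $u=g^{-1}(v)$ satisfies $|u|^{\alpha/\beta}u=v\in W^{1,\beta}$, hence by the forward chain rule (applied to $u$, not $v$) the distributional identity $D_iv=\bigl(\tfrac{\alpha+\beta}{\beta}\bigr)|u|^{\alpha/\beta}D_iu$ holds in $L^{\beta}$, and then $|u|^{\alpha}|D_iu|^{\beta}=c^{-1}|D_iv|^{\beta}\in L^{1}$ is immediate. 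This avoids invoking the chain rule on the singular $g^{-1}$ altogether.
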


\medskip Now, we recall some basic definitions and results about variable
exponent Lebesgue and Sobolev spaces \cite{ADA,DIE2,FAN1,KOV,MUS}.

Let $\Omega $ be a Lebesgue measurable subset of $\mathbb{R}^{n}$ such that $%
\left\vert \Omega \right\vert >0$. The function set $M\left( \Omega \right) $
denotes the family of all measurable functions $p:\Omega \longrightarrow %
\left[ 1,\infty \right] $ and the set $M_{0}\left( \Omega \right) $ is
defined as, 
\begin{equation*}
M_{0}\left( \Omega \right) :=\left\{ p\in M\left( \Omega \right) :\ 1\leq
p^{-}\leq p\left( x\right) \leq p^{+}<\infty ,\text{ a.e. }x\in \Omega
\right\}
\end{equation*}
where $p^{-}:=\underset{\Omega }{ess}\inf \left\vert p\left( x\right)
\right\vert,\text{ } p^{+}:=\underset{\Omega }{ess}\sup \left\vert p\left(
x\right) \right\vert$.

For $p\in $ $M\left( \Omega \right) ,$ $\Omega _{\infty }^{p}\equiv \Omega
_{\infty }\equiv \left\{ x\in \Omega |\text{ }p\left( x\right) =\infty
\right\}.$ On the set of all functions on $\Omega,$ define the functional $%
\sigma _{p}$ and $\left\Vert .\right\Vert _{p}$ by%
\begin{equation*}
\sigma _{p}\left( u\right) \equiv \int\limits_{\Omega \backslash \Omega
_{\infty }}\left\vert u\right\vert ^{p\left( x\right) }dx+\underset{\Omega
_{\infty }}{ess}\sup \left\vert u\left( x\right) \right\vert
\end{equation*}%
and%
\begin{equation*}
\left\Vert u\right\Vert _{L^{p\left( x\right) }\left( \Omega \right) }\equiv
\inf \left\{ \lambda >0:\text{ }\sigma _{p}\left( \frac{u}{\lambda }\right)
\leq 1\right\} .
\end{equation*}%
If $p\in L^{\infty }\left( \Omega \right) $ then $p\in M_{0}\left( \Omega
\right) $, $\sigma _{p}\left( u\right) \equiv \int\limits_{\Omega
}\left\vert u\right\vert ^{p\left( x\right) }dx$ and the variable exponent
Lebesgue space is defined as follows: 
\begin{equation*}
L^{p\left( x\right) }\left( \Omega \right) := \left\{ u: u\text{ } \text{is
a measurable real-valued function such that}\text{ } \sigma _{p}\left(
u\right) <\infty \right\} .
\end{equation*}%
If $p^{-}>1,$ then the space $L^{p\left( x\right) }\left( \Omega \right) $
becomes a reflexive and separable Banach space with the norm $\left\Vert
.\right\Vert _{L^{p\left( x\right) }\left( \Omega \right)}$ which is
so-called Luxemburg norm.

If $0<\left\vert \Omega \right\vert <\infty ,$ and $p_{1},$ $p_{2}\in
M\left( \Omega \right) $ then the continuous embedding $L^{p_{1}\left(
x\right) }\left( \Omega \right) \subset L^{p_{2}\left( x\right) }\left(
\Omega \right) $ exists $\iff $ $p_{2}\left( x\right) \leq p_{1}\left(
x\right) $ for a.e. $x\in \Omega .$

For $u\in L^{p\left( x\right) }\left( \Omega \right) $ and $v\in L^{q\left(
x\right) }\left( \Omega \right) $ where $p,$ $q\in $ $M_{0}\left( \Omega
\right) $ and $\frac{1}{p\left( x\right) }+\frac{1}{q\left( x\right) }=1$
the following inequalities be satisfied:

\begin{equation*}  \label{holder}
\int\limits_{\Omega }\left\vert uv\right\vert dx\leq 2\left\Vert
u\right\Vert _{L^{p\left( x\right) }\left( \Omega \right) }\left\Vert
v\right\Vert _{L^{q\left( x\right) }\left( \Omega \right), }  \tag{2.4}
\end{equation*}
and 
\begin{equation*}  \label{minmax}
\min \{\left\Vert u\right\Vert _{L^{p\left( x\right) }\left( \Omega \right)
}^{p^{-}}, \left\Vert u\right\Vert _{L^{p\left( x\right) }\left( \Omega
\right) }^{p^{+}}\}\leq \sigma _{p}\left( u\right) \leq \max \{\left\Vert
u\right\Vert _{L^{p\left( x\right) }\left( \Omega \right) }^{p^{-}},
\left\Vert u\right\Vert _{L^{p\left( x\right) }\left( \Omega \right)
}^{p^{+}}\}.  \tag{2.5}
\end{equation*}

\begin{lemma}
\label{denknorm} Let $u$, $u_{k}\in L^{p\left( x\right) }\left( \Omega
\right) ,$ $k=1,2,...$ Then the following statements are equivalent to each
other:

\begin{enumerate}
\item $\underset{k\rightarrow \infty }{\lim }\left\Vert u_{k}-u\right\Vert
_{L^{p\left( x\right) }\left( \Omega \right) }=0$;

\item $\underset{k\rightarrow \infty }{\lim }\sigma _{p}\left(
u_{k}-u\right) =0$;

\item $u_{k}$ converges to $u$ in $\Omega $ in measure and $\underset{%
k\rightarrow \infty }{\lim }\sigma _{p}\left( u_{k}\right) =\sigma
_{p}\left( u\right) .$
\end{enumerate}
\end{lemma}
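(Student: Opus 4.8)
The plan is to establish the cycle $(1)\Leftrightarrow(2)$, $(2)\Rightarrow(3)$ and $(3)\Rightarrow(2)$, which together give the mutual equivalence of all three statements. I work under the standing hypothesis $p\in M_{0}(\Omega)$, so that $1\leq p^{-}\leq p(x)\leq p^{+}<\infty$ a.e., $\Omega_{\infty}=\emptyset$ and $\sigma_{p}(v)=\int_{\Omega}|v|^{p(x)}\,dx$; I abbreviate $v_{k}:=u_{k}-u$ and $\|\cdot\|:=\|\cdot\|_{L^{p(x)}(\Omega)}$. The equivalence $(1)\Leftrightarrow(2)$ is read off directly from the min--max inequality $(2.5)$ applied to $v_{k}$,
\[
\min\{\|v_{k}\|^{p^{-}},\|v_{k}\|^{p^{+}}\}\leq\sigma_{p}(v_{k})\leq\max\{\|v_{k}\|^{p^{-}},\|v_{k}\|^{p^{+}}\}.
\]
If $\|v_{k}\|\to0$, the right-hand side tends to $0$, hence $\sigma_{p}(v_{k})\to0$. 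Conversely, once $\sigma_{p}(v_{k})<1$ the left inequality forces $\|v_{k}\|<1$ (otherwise $\min\{\cdot\}=\|v_{k}\|^{p^{-}}\geq1$), and then $\|v_{k}\|\leq\sigma_{p}(v_{k})^{1/p^{+}}\to0$.

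For $(2)\Rightarrow(3)$, convergence in measure follows from a Chebyshev-type estimate: fixing $\varepsilon>0$ and setting $c_{\varepsilon}:=\min\{\varepsilon^{p^{-}},\varepsilon^{p^{+}}\}>0$, on $E_{k}^{\varepsilon}:=\{x:|v_{k}(x)|>\varepsilon\}$ one has $|v_{k}|^{p(x)}\geq c_{\varepsilon}$, whence $\sigma_{p}(v_{k})\geq c_{\varepsilon}\,|E_{k}^{\varepsilon}|$ and $|E_{k}^{\varepsilon}|\to0$. For the modular continuity $\sigma_{p}(u_{k})\to\sigma_{p}(u)$ I would exploit the convexity bound $|a+b|^{p(x)}\leq2^{p^{+}-1}(|a|^{p(x)}+|b|^{p(x)})$, which yields the pointwise domination $|u_{k}|^{p(x)}\leq g_{k}:=2^{p^{+}-1}(|v_{k}|^{p(x)}+|u|^{p(x)})$. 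Given any subsequence, convergence in measure permits extraction of a further subsequence along which $u_{k}\to u$ and $v_{k}\to0$ a.e.; then $|u_{k}|^{p(x)}\to|u|^{p(x)}$ a.e., $g_{k}\to g:=2^{p^{+}-1}|u|^{p(x)}$ a.e., and $\int_{\Omega}g_{k}\,dx=2^{p^{+}-1}(\sigma_{p}(v_{k})+\sigma_{p}(u))\to\int_{\Omega}g\,dx$ by $(2)$. The generalized dominated convergence theorem (the version with a convergent sequence of dominating functions) gives $\sigma_{p}(u_{k})\to\sigma_{p}(u)$ along this sub-subsequence, and since every subsequence contains such a one, the full sequence converges.

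The implication $(3)\Rightarrow(2)$ is the variable-exponent analogue of the Riesz--Scheffé lemma and is the step I expect to be the main obstacle, since here one cannot simply dominate $|v_{k}|^{p(x)}$ by a fixed integrable function. The idea is to reverse the roles above: from $|v_{k}|^{p(x)}\leq2^{p^{+}-1}(|u_{k}|^{p(x)}+|u|^{p(x)})=:h_{k}$, convergence in measure again allows, along an arbitrary subsequence, extraction of an a.e.-convergent sub-subsequence on which $f_{k}:=|v_{k}|^{p(x)}\to0$ a.e., $h_{k}\to h:=2^{p^{+}}|u|^{p(x)}$ a.e., and $\int_{\Omega}h_{k}\,dx=2^{p^{+}-1}(\sigma_{p}(u_{k})+\sigma_{p}(u))\to\int_{\Omega}h\,dx$ by the modular hypothesis in $(3)$. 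Applying the generalized dominated convergence theorem to $0\leq f_{k}\leq h_{k}$ yields $\sigma_{p}(v_{k})=\int_{\Omega}f_{k}\,dx\to0$ along the sub-subsequence, and the subsequence principle upgrades this to $\sigma_{p}(v_{k})\to0$ for the full sequence. Combined with $(1)\Leftrightarrow(2)$ this closes the cycle and proves the lemma.
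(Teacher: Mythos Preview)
The paper does not actually supply a proof of this lemma: it is listed among the ``basic definitions and results about variable exponent Lebesgue and Sobolev spaces'' recalled from the references \cite{ADA,DIE2,FAN1,KOV,MUS}, and no argument is given in the text. So there is no in-paper proof to compare your attempt against.

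That said, your argument is correct and is essentially the standard one found in those references (notably Fan--Zhao \cite{FAN1} and Kov\'a\v{c}ik--R\'akosn\'{\i}k \cite{KOV}). The equivalence $(1)\Leftrightarrow(2)$ via the min--max estimate $(2.5)$, the Chebyshev-type bound for convergence in measure, and the two applications of the generalized dominated convergence theorem (with moving dominators $g_{k}$ and $h_{k}$ whose integrals converge) are exactly the ingredients used in the literature. Your handling of the subsequence principle to pass from a.e.\ convergence on sub-subsequences back to full-sequence convergence of the modulars is also the right mechanism. Nothing is missing.
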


Let $\Omega \subset \mathbb{R}^{n}$ be a bounded domain and $p\in L^{\infty
}\left( \Omega \right) $ then variable exponent Sobolev space is defined as, 
\begin{equation*}
W^{1,\text{ }p\left( x\right) }\left( \Omega \right) :=\left\{ u\in
L^{p\left( x\right) }\left( \Omega \right) :\text{ }\left\vert \nabla
u\right\vert \in L^{p\left( x\right) }\left( \Omega \right) \right\}
\end{equation*}%
and this space is a separable Banach space with the norm 
\begin{equation*}
\left\Vert u\right\Vert _{W^{1,\text{ }p\left( x\right) }\left( \Omega
\right) }\equiv \left\Vert u\right\Vert _{L^{p\left( x\right) }\left( \Omega
\right) }+\left\Vert \nabla u\right\Vert _{L^{p\left( x\right) }\left(
\Omega \right)}.
\end{equation*}

\medskip

In the following discussion, we give the definition of generalized nonlinear
spaces (functional pn-spaces with variable exponent) and features of them
that indicate their relation with known spaces. These classes are nonlinear
spaces which are generalization of nonlinear spaces with constant exponent
studied in \cite{S4} (see also references therein). We also specify that
some of the results and its proofs can be found in \cite{S-S-1,S-S-2}.

\begin{definition}
\label{uz} Let $\Omega \subset \mathbb{R}^{n}\left( n\geq 2\right) $ be a
bounded domain with Lipschitz boundary and $\gamma ,$ $\beta $ $\in
M_{0}\left( \Omega \right) .$ We introduce $S_{1,\gamma \left( x\right)
,\beta \left( x\right) }\left( \Omega \right) ,$ the class of functions $%
u:\Omega \rightarrow \mathbb{R}$ and the functional $[.]_{S_{\gamma ,\beta
}}:S_{1,\gamma \left( x\right) ,\beta \left( x\right) }\left( \Omega \right)
\longrightarrow \mathbb{R}_{+}$ as follows:%
\begin{equation*}
S_{1,\gamma \left( x\right) ,\beta \left( x\right) }\left( \Omega \right)
:=\left\{ u\in L^{1}\left( \Omega \right) :\int\limits_{\Omega }\left\vert
u\right\vert ^{\gamma \left( x\right) +\beta \left( x\right)
}dx+\sum_{i=1}^{n}\int\limits_{\Omega }\left\vert u\right\vert ^{\gamma
\left( x\right) }\left\vert D_{i}u\right\vert ^{\beta \left( x\right)
}dx<\infty \right\} ,
\end{equation*}%
\begin{equation*}
\lbrack u]_{S_{\gamma ,\beta }}:=\inf \left\{ \lambda >0:\int\limits_{\Omega
}\left\vert \frac{u}{\lambda }\right\vert ^{\gamma \left( x\right) +\beta
\left( x\right) }dx+\sum_{i=1}^{n}\left( \int\limits_{\Omega }\left\vert 
\frac{\left\vert u\right\vert ^{\frac{\gamma \left( x\right) }{\beta \left(
x\right) }}D_{i}u}{\lambda ^{\frac{\gamma \left( x\right) }{\beta \left(
x\right) }+1}}\right\vert ^{\beta \left( x\right) }\right) dx\leq 1\right\} .
\end{equation*}
\end{definition}

$[.]_{S_{\gamma ,\beta }}$ defines a pseudo-norm on $S_{1,\gamma \left(
x\right) ,\beta \left( x\right) }\left( \Omega \right) ,$ actually it can be
readily verified that $[.]_{S_{\gamma ,\beta }}$ fulfills all axioms of
pseudo-norm (pn) see \cite{S-A}, \cite{S-Sp} i.e. $[u]_{S_{\gamma ,\beta
}}\geq 0,$ $u=0\Rightarrow \lbrack u]_{S_{\gamma ,\beta }}=0,$ $%
[u]_{S_{\gamma ,\beta }}\neq \lbrack v]_{S_{\gamma ,\beta }}\Rightarrow
u\neq v$ and $[u]_{S_{\gamma ,\beta }}=0\Rightarrow u=0.$

\medskip

Let $S_{1,\gamma \left( x\right) ,\beta \left( x\right) }\left( \Omega
\right) $ be the space given in the Definition \ref{uz} and $\theta \left(
x\right)\in M_{0}\left( \Omega \right)$, we denote $S_{1,\gamma \left(
x\right) ,\beta \left( x\right) ,\theta \left( x\right) }\left( \Omega
\right) ,$ the class of functions $u:\Omega \rightarrow \mathbb{R} $ by the
following intersection: 
\begin{equation*}  \label{arakesit}
S_{1,\gamma \left( x\right) ,\beta \left( x\right) ,\theta \left( x\right)
}\left( \Omega \right) :=S_{1,\gamma \left( x\right) ,\beta \left( x\right)
}\left( \Omega \right) \cap L^{\theta \left( x\right) }\left( \Omega \right)
,  \tag{2.6}
\end{equation*}%
with the pseudo-norm 
\begin{equation*}
\lbrack u]_{S_{\gamma ,\beta ,\theta} }:=[u]_{S_{\gamma ,\beta }}+\left\Vert
u\right\Vert _{L^{\theta \left( x\right) }\left( \Omega \right) },\text{ \ }%
\forall u\in S_{1,\gamma \left( x\right) ,\beta \left( x\right) ,\theta
\left( x\right) }\left( \Omega \right) .
\end{equation*}

\begin{proposition}
\label{oner} If $\gamma $, $\beta $, $\theta $ $\in M_{0}\left( \Omega
\right) $ and $\theta \left( x\right) \geq \gamma \left( x\right) +\beta
\left( x\right) +\varepsilon _{0}$ a.e. $x\in \Omega $ for some $\varepsilon
_{0}>0,$ then we have the following equivalence; 
\begin{equation*}
S_{1,\gamma \left( x\right) ,\beta \left( x\right) ,\theta \left( x\right)
}\left( \Omega \right) \equiv \left\{ u\in L^{1}\left( \Omega \right) :\Re
^{\gamma ,\beta ,\theta }\left( u\right)<\infty \right\}
\end{equation*}
where $\Re ^{\gamma ,\beta ,\theta }\left( u\right) :=\int\limits_{\Omega
}\left\vert u\right\vert ^{\theta \left( x\right)
}dx+\sum_{i=1}^{n}\int\limits_{\Omega }\left\vert u\right\vert ^{\gamma
\left( x\right) }\left\vert D_{i}u\right\vert ^{\beta \left( x\right)}dx,$ 
\newline
and the pseudo-norm on this space is 
\begin{equation*}
\lbrack u]_{S_{\gamma ,\beta ,\theta }}\equiv \inf \left\{ \lambda
>0:\int\limits_{\Omega }\left\vert \frac{u}{\lambda }\right\vert ^{\theta
\left( x\right) }dx+\sum_{i=1}^{n}\left( \int\limits_{\Omega }\left\vert 
\frac{\left\vert u\right\vert ^{\frac{\gamma \left( x\right) }{\beta \left(
x\right) }}D_{i}u}{\lambda ^{\frac{\gamma \left( x\right) }{\beta \left(
x\right) }+1}}\right\vert ^{\beta \left( x\right) }\right) dx\leq 1\right\} .
\end{equation*}
\end{proposition}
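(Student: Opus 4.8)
The plan is to prove the equivalence by establishing the two set inclusions separately, since the non-trivial direction is showing that finiteness of $\Re^{\gamma,\beta,\theta}(u)$ together with $u \in L^1(\Omega)$ already forces $u \in L^{\theta(x)}(\Omega)$ and $u \in S_{1,\gamma(x),\beta(x)}(\Omega)$. The inclusion $S_{1,\gamma(x),\beta(x),\theta(x)}(\Omega) \subseteq \{u \in L^1 : \Re^{\gamma,\beta,\theta}(u) < \infty\}$ is immediate from the definitions: membership in $L^{\theta(x)}(\Omega)$ gives $\int_\Omega |u|^{\theta(x)}\,dx < \infty$, and membership in $S_{1,\gamma(x),\beta(x)}(\Omega)$ gives the finiteness of the gradient-type sum, so $\Re^{\gamma,\beta,\theta}(u) < \infty$. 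For the reverse inclusion, suppose $u \in L^1(\Omega)$ with $\Re^{\gamma,\beta,\theta}(u) < \infty$. Then $\int_\Omega |u|^{\theta(x)}\,dx < \infty$ directly yields $u \in L^{\theta(x)}(\Omega)$. It remains to check $u \in S_{1,\gamma(x),\beta(x)}(\Omega)$, i.e. that $\int_\Omega |u|^{\gamma(x)+\beta(x)}\,dx < \infty$ (the gradient sum is already controlled). This is where the hypothesis $\theta(x) \geq \gamma(x)+\beta(x)+\varepsilon_0$ enters: on $\Omega$ with finite measure, the pointwise inequality between exponents and the embedding $L^{\theta(x)}(\Omega) \hookrightarrow L^{\gamma(x)+\beta(x)}(\Omega)$ (valid precisely because $\gamma(x)+\beta(x) \leq \theta(x)$ a.e., as recalled in the preliminaries) give $\int_\Omega |u|^{\gamma(x)+\beta(x)}\,dx < \infty$, so $u \in S_{1,\gamma(x),\beta(x)}(\Omega)$ and hence $u \in S_{1,\gamma(x),\beta(x),\theta(x)}(\Omega)$.

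For the statement about the pseudo-norm, I would show that the claimed infimum coincides with $[u]_{S_{\gamma,\beta,\theta}} = [u]_{S_{\gamma,\beta}} + \|u\|_{L^{\theta(x)}(\Omega)}$ up to equivalence of pseudo-norms, which is all that is needed for topological purposes (the pn-space structure is defined up to equivalence). Denote by $N(u)$ the infimum on the right-hand side of the proposition; the bracketed expression inside it is $\int_\Omega |u/\lambda|^{\theta(x)}\,dx + \sum_i \int_\Omega \big| |u|^{\gamma(x)/\beta(x)} D_i u \,/\, \lambda^{\gamma(x)/\beta(x)+1}\big|^{\beta(x)}\,dx$. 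One direction is straightforward: if $\lambda > \max\{\|u\|_{L^{\theta(x)}}, [u]_{S_{\gamma,\beta}}'\}$ for the appropriate normalizing $\lambda$'s of each summand, then by the modular inequality \eqref{minmax} each term is $\leq 1$, so $N(u)$ is bounded by a constant times $[u]_{S_{\gamma,\beta,\theta}}$; conversely, if the bracketed sum at level $\lambda$ is $\leq 1$, then each summand is $\leq 1$, which forces $\|u\|_{L^{\theta(x)}} \leq \lambda$ and (after unwinding the definition of $[\cdot]_{S_{\gamma,\beta}}$) a comparable bound on $[u]_{S_{\gamma,\beta}}$, giving $[u]_{S_{\gamma,\beta,\theta}} \leq C\, N(u)$. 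The key elementary fact used repeatedly is that for a finite sum of nonnegative modular terms to be $\leq 1$, each term must be $\leq 1$, combined with \eqref{minmax} to pass between modulars and norms.

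The main obstacle I expect is purely bookkeeping: matching the scaling factor $\lambda^{\gamma(x)/\beta(x)+1}$ inside the gradient modular of $N(u)$ against the way $[u]_{S_{\gamma,\beta}}$ was originally defined in Definition \ref{uz} — one must verify that the same $\lambda$ plays the normalizing role in both expressions, so that the gradient parts genuinely coincide and only the zeroth-order term changes from $|u/\lambda|^{\gamma(x)+\beta(x)}$ to $|u/\lambda|^{\theta(x)}$. Once that alignment is confirmed, the replacement of the exponent $\gamma(x)+\beta(x)$ by the larger exponent $\theta(x)$ in the zeroth-order modular term is harmless for finiteness (by the embedding used above) and for the infimum it changes the pseudo-norm only up to a multiplicative constant depending on $\varepsilon_0$, $|\Omega|$, and the exponent bounds $\gamma^\pm, \beta^\pm, \theta^\pm$; I would make this precise with one application of \eqref{holder} or a direct modular estimate. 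No compactness or approximation argument is required, so the proof is short.
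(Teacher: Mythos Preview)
The paper does not give its own proof of this proposition; it is stated in the preliminaries with the remark that ``some of the results and its proofs can be found in \cite{S-S-1,S-S-2}.'' So there is no in-paper argument to compare against directly.

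Your argument for the set equivalence is correct and is precisely the natural one: the forward inclusion is definitional, and the reverse hinges on the embedding $L^{\theta(x)}(\Omega)\hookrightarrow L^{\gamma(x)+\beta(x)}(\Omega)$ (recalled in the preliminaries, and also available in modular form via Lemma~\ref{var1}), which is exactly where the hypothesis $\theta(x)\ge\gamma(x)+\beta(x)+\varepsilon_0$ is used. That part needs no change.

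For the pseudo-norm part, your reading that the claim is an equivalence (not literal equality) is the right one, and the strategy---observe that the gradient modular term is identical in $[u]_{S_{\gamma,\beta}}$ (Definition~\ref{uz}) and in the proposed $N(u)$, and only the zeroth-order exponent changes from $\gamma(x)+\beta(x)$ to $\theta(x)$---is correct. One small tightening: to pass from ``each summand is $\le 1$'' to a bound on $[u]_{S_{\gamma,\beta}}$, you should observe that $\int_\Omega |u/\lambda|^{\theta(x)}dx\le 1$ together with Lemma~\ref{var1} (or the embedding you already cite) gives $\int_\Omega |u/\lambda|^{\gamma(x)+\beta(x)}dx\le 1+|\Omega|$, so at the cost of replacing $\lambda$ by a fixed multiple (depending only on $|\Omega|$ and the exponent bounds) the $[u]_{S_{\gamma,\beta}}$-modular is $\le 1$. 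This makes the constant in $[u]_{S_{\gamma,\beta,\theta}}\le C\,N(u)$ explicit and removes the need for a separate appeal to \eqref{holder}. The reverse inequality is even more direct since $\theta(x)\ge\gamma(x)+\beta(x)$ and Lemma~\ref{var1} again controls the swap. With that adjustment your proof is complete and in line with the tools the paper itself uses in Section~4.
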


\begin{lemma}
\label{norm} Assume that conditions of Proposition \ref{oner} are fulfilled.
Let $u\in $ $S_{1,\gamma \left( x\right) ,\beta \left( x\right) ,\theta
\left( x\right) }\left( \Omega \right) $ and $\lambda _{u}:=[u]_{S_{\gamma
,\beta ,\theta }},$ then the following inequality%
\begin{equation*}
\max \{\lambda _{u}^{\gamma ^{-}+\beta ^{-}},\lambda _{u}^{\theta
^{+}}\}\geq \Re ^{\gamma ,\beta ,\theta }\left( u\right) \geq \min \{\lambda
_{u}^{\gamma ^{-}+\beta ^{-}},\lambda _{u}^{\theta ^{+}}\}
\end{equation*}%
holds.
\end{lemma}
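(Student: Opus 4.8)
The plan is to follow the classical route that yields the modular--norm inequality (2.5) for variable exponent Lebesgue spaces. For $u\in S_{1,\gamma(x),\beta(x),\theta(x)}(\Omega)$, $u\not\equiv 0$, introduce the function
\[
\rho_{u}(\lambda):=\int_{\Omega}\lambda^{-\theta(x)}|u|^{\theta(x)}\,dx+\sum_{i=1}^{n}\int_{\Omega}\lambda^{-(\gamma(x)+\beta(x))}|u|^{\gamma(x)}|D_{i}u|^{\beta(x)}\,dx ,\qquad \lambda>0,
\]
obtained by using the identity $\big\vert\,|u|^{\gamma(x)/\beta(x)}D_{i}u\,\lambda^{-\gamma(x)/\beta(x)-1}\big\vert^{\beta(x)}=\lambda^{-(\gamma(x)+\beta(x))}|u|^{\gamma(x)}|D_{i}u|^{\beta(x)}$, so that $[u]_{S_{\gamma,\beta,\theta}}=\inf\{\lambda>0:\rho_{u}(\lambda)\le 1\}$. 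The first step is to show that $\lambda\mapsto\rho_{u}(\lambda)$ is continuous and strictly decreasing on $(0,\infty)$, with $\rho_{u}(\lambda)\to\infty$ as $\lambda\to 0^{+}$ and $\rho_{u}(\lambda)\to 0$ as $\lambda\to\infty$. Continuity and the limit at $\infty$ follow from dominated convergence: since $\gamma,\beta,\theta\in M_{0}(\Omega)$ are bounded, for $\lambda$ in any compact subinterval of $(0,\infty)$ one has $\lambda^{-\theta(x)}\le C$ and $\lambda^{-(\gamma(x)+\beta(x))}\le C$, so all integrands are dominated by $C\big(|u|^{\theta(x)}+\sum_{i}|u|^{\gamma(x)}|D_{i}u|^{\beta(x)}\big)$, which lies in $L^{1}(\Omega)$ precisely because $\Re^{\gamma,\beta,\theta}(u)<\infty$ by Proposition \ref{oner}. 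The limit at $0^{+}$ follows from Fatou's lemma, using that $|u|^{\theta(x)}>0$ on a set of positive measure (this is where $u\not\equiv 0$ enters); strict monotonicity is clear since the first integrand is strictly decreasing in $\lambda$ on that set. Hence $\{\lambda>0:\rho_{u}(\lambda)\le 1\}=[\lambda_{u},\infty)$ with $\lambda_{u}=[u]_{S_{\gamma,\beta,\theta}}$, and in particular the key identity $\rho_{u}(\lambda_{u})=1$ holds. (If $u=0$ a.e. then $\lambda_{u}=0$ and both sides of the asserted inequality vanish, so that case is trivial.)

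The second step is an elementary splitting of powers. From $\rho_{u}(\lambda_{u})=1$ write
\[
\Re^{\gamma,\beta,\theta}(u)=\int_{\Omega}\lambda_{u}^{\theta(x)}\,\lambda_{u}^{-\theta(x)}|u|^{\theta(x)}\,dx+\sum_{i=1}^{n}\int_{\Omega}\lambda_{u}^{\gamma(x)+\beta(x)}\,\lambda_{u}^{-(\gamma(x)+\beta(x))}|u|^{\gamma(x)}|D_{i}u|^{\beta(x)}\,dx .
\]
Under the hypotheses of Proposition \ref{oner} we have $\gamma(x)+\beta(x)\le\theta(x)$ a.e., so every exponent occurring above lies in $[\gamma^{-}+\beta^{-},\theta^{+}]$. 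If $\lambda_{u}\ge 1$, then $\lambda_{u}^{\gamma^{-}+\beta^{-}}\le\lambda_{u}^{\gamma(x)+\beta(x)}$ and $\lambda_{u}^{\gamma^{-}+\beta^{-}}\le\lambda_{u}^{\theta(x)}\le\lambda_{u}^{\theta^{+}}$; inserting these bounds into the two displayed integrals and using $\rho_{u}(\lambda_{u})=1$ gives $\lambda_{u}^{\gamma^{-}+\beta^{-}}\le\Re^{\gamma,\beta,\theta}(u)\le\lambda_{u}^{\theta^{+}}$, which is the claim since here $\min\{\lambda_{u}^{\gamma^{-}+\beta^{-}},\lambda_{u}^{\theta^{+}}\}=\lambda_{u}^{\gamma^{-}+\beta^{-}}$ and $\max\{\cdot\}=\lambda_{u}^{\theta^{+}}$. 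If $0<\lambda_{u}<1$ the base-less-than-one inequalities reverse, namely $\lambda_{u}^{\theta^{+}}\le\lambda_{u}^{\gamma(x)+\beta(x)}$ and $\lambda_{u}^{\theta^{+}}\le\lambda_{u}^{\theta(x)}\le\lambda_{u}^{\gamma^{-}+\beta^{-}}$, so the same computation yields $\lambda_{u}^{\theta^{+}}\le\Re^{\gamma,\beta,\theta}(u)\le\lambda_{u}^{\gamma^{-}+\beta^{-}}$, and now $\min\{\cdot\}=\lambda_{u}^{\theta^{+}}$, $\max\{\cdot\}=\lambda_{u}^{\gamma^{-}+\beta^{-}}$. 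Combining the two cases gives the stated double inequality.

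The only genuinely nontrivial point is the first step, that is, verifying that the infimum defining $[u]_{S_{\gamma,\beta,\theta}}$ is attained and that $\rho_{u}$ takes the value $1$ there. This rests on continuity/monotonicity of $\rho_{u}$ and on the two boundary limits; once it is in place, the remainder is the routine power-splitting argument above, entirely parallel to the proof of (2.5) for variable exponent Lebesgue spaces. Care is needed only in producing the $L^{1}$ dominating function (which is supplied by the very definition of $\Re^{\gamma,\beta,\theta}$ and Proposition \ref{oner}) and in invoking $u\not\equiv 0$ to guarantee strict monotonicity and the blow-up of $\rho_{u}$ at $\lambda\to 0^{+}$.
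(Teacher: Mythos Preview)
Your argument is correct. The paper itself does not give a proof of this lemma: it is stated in the Preliminaries as a known fact, with the proofs deferred to the references \cite{S-S-1,S-S-2}. Your approach---establishing that the modular $\rho_{u}(\lambda)$ is continuous, strictly decreasing, and hits the value $1$ exactly at $\lambda_{u}$, then bounding the pointwise exponents in $[\gamma^{-}+\beta^{-},\theta^{+}]$ via the hypothesis $\gamma(x)+\beta(x)\le\theta(x)$---is precisely the standard route used to prove the analogous inequality (2.5) for variable exponent Lebesgue spaces, and it carries over here without difficulty. The only point worth flagging is that you correctly identified where the assumption of Proposition~\ref{oner} enters: without $\gamma(x)+\beta(x)\le\theta(x)$ the interval $[\gamma^{-}+\beta^{-},\theta^{+}]$ would not necessarily contain all the exponents $\gamma(x)+\beta(x)$ and $\theta(x)$, and the power-splitting step would fail.
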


\begin{theorem}
\label{gom} Suppose that conditions of Proposition \ref{oner} are satisfied
and let $p \in M_{0}\left( \Omega \right),$ $p\left( x\right) \geq \theta
\left( x\right) $ a.e. $x\in \Omega .$ Then, the embedding 
\begin{equation}  \label{gomtag}
W^{1,\text{ }p\left( x\right) }\left( \Omega \right) \subset S_{1,\gamma
\left( x\right) ,\beta \left( x\right) ,\theta \left( x\right) }\left(
\Omega \right)  \tag{2.7}
\end{equation}
holds.
\end{theorem}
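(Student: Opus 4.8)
The plan is to show that any $u \in W^{1,p(x)}(\Omega)$ lies in $S_{1,\gamma(x),\beta(x),\theta(x)}(\Omega)$ by invoking Proposition \ref{oner}, i.e.\ by verifying that $\Re^{\gamma,\beta,\theta}(u) < \infty$. Since $p(x) \geq \theta(x)$ a.e.\ and $|\Omega| < \infty$, the embedding $L^{p(x)}(\Omega) \subset L^{\theta(x)}(\Omega)$ recalled in the preliminaries gives $u \in L^{\theta(x)}(\Omega)$, so the first term $\int_\Omega |u|^{\theta(x)}\,dx$ is finite. It remains to control the sum $\sum_{i=1}^n \int_\Omega |u|^{\gamma(x)}|D_i u|^{\beta(x)}\,dx$ by the norm of $u$ in $W^{1,p(x)}(\Omega)$.

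First I would fix $i$ and estimate the integrand pointwise using Young's inequality with variable exponents: writing $\beta(x)/p(x)$ and its conjugate, one splits $|u|^{\gamma(x)}|D_i u|^{\beta(x)}$ so that $|D_i u|^{\beta(x)}$ is absorbed into a term $\lesssim |D_i u|^{p(x)} + 1$ (using $\beta(x) \leq p(x)$, which follows from $p(x) \geq \theta(x) \geq \gamma(x) + \beta(x) + \varepsilon_0 \geq \beta(x)$), and the remaining power of $|u|$ becomes a term $\lesssim |u|^{s(x)} + 1$ for some exponent $s(x)$ that — after checking the arithmetic of the Hölder/Young split — satisfies $s(x) \leq \theta(x) \leq p(x)$. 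Concretely, since $\gamma(x) + \beta(x) \leq \theta(x) \leq p(x)$, one can apply Young's inequality in the form $ab \leq \tfrac{\beta(x)}{p(x)} a^{p(x)/\beta(x)} + \tfrac{p(x)-\beta(x)}{p(x)} b^{p(x)/(p(x)-\beta(x))}$ to $a = |D_i u|^{\beta(x)}$ and an appropriate $b$ built from $|u|^{\gamma(x)}$, arranging the exponent of $|u|$ in the second term to come out $\leq \theta(x)$; then integrate over $\Omega$. This yields
\begin{equation*}
\int_\Omega |u|^{\gamma(x)}|D_i u|^{\beta(x)}\,dx \leq C\left( \int_\Omega |D_i u|^{p(x)}\,dx + \int_\Omega |u|^{\theta(x)}\,dx + |\Omega| \right),
\end{equation*}
and summing over $i$ shows $\Re^{\gamma,\beta,\theta}(u) < \infty$, so $u \in S_{1,\gamma(x),\beta(x),\theta(x)}(\Omega)$; the additivity term $|\Omega|$ is finite by hypothesis. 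One should also check the closure/density point: $W^{1,p(x)}(\Omega)$ is built from $L^{p(x)}$ functions with weak derivatives in $L^{p(x)}$, and the pointwise Young estimate is applied a.e., so no smoothness beyond $W^{1,p(x)}$ is needed; if desired one notes the estimate is stable under the $W^{1,p(x)}$-convergence described in Lemma \ref{denknorm}, but for mere set-inclusion the pointwise argument suffices.

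The main obstacle is bookkeeping the variable exponents correctly in the Young split: one must choose the conjugate pair so that \emph{both} resulting exponents, namely $p(x)/\beta(x)$ applied to $|D_i u|^{\beta(x)}$ and the exponent attached to $|u|$, stay in the admissible range — the first automatically, and the second bounded above by $\theta(x)$ (hence by $p(x)$) uniformly in $x$, which is exactly where the hypothesis $\theta(x) \geq \gamma(x)+\beta(x)+\varepsilon_0$ of Proposition \ref{oner} and the hypothesis $p(x) \geq \theta(x)$ of this theorem are both used. Because $\gamma, \beta, \theta \in M_0(\Omega)$ are essentially bounded with positive lower bounds, all the exponents involved lie in $M_0(\Omega)$ and the elementary inequalities $\sigma_q(v) \leq \max\{\|v\|^{q^-}, \|v\|^{q^+}\}$ from (2.5) convert the finiteness of the modulars into a genuine norm bound; I would invoke (2.5) and Lemma \ref{norm} at the end to phrase the result, if wanted, as a quantitative estimate $[u]_{S_{\gamma,\beta,\theta}} \leq F\big(\|u\|_{W^{1,p(x)}(\Omega)}\big)$ for an explicit increasing $F$, but the stated theorem only claims the inclusion, so the modular computation above already completes the proof.
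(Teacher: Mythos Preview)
The paper does not actually give a proof of Theorem~\ref{gom}; it is stated in Section~2 as a preliminary result, with the proof deferred to the references \cite{S-S-1,S-S-2}. So there is no in-paper argument to compare against.

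Your approach is correct and is the natural one. Two small points worth making explicit. First, the strict inequality $\beta(x)<p(x)$ that you need for the Young split is genuinely available: since $\gamma\in M_{0}(\Omega)$ we have $\gamma(x)\geq 1$, hence $p(x)\geq\theta(x)\geq\gamma(x)+\beta(x)+\varepsilon_{0}\geq\beta(x)+1+\varepsilon_{0}$. Second, the arithmetic you allude to (``after checking\dots $s(x)\leq\theta(x)$'') does go through: with $s(x)=\dfrac{\gamma(x)p(x)}{p(x)-\beta(x)}$, the inequality $s(x)\leq\theta(x)$ is equivalent to $\theta(x)\beta(x)\leq p(x)\bigl(\theta(x)-\gamma(x)\bigr)$, and since $p(x)\geq\theta(x)$ and $\theta(x)-\gamma(x)\geq\beta(x)+\varepsilon_{0}>0$ one gets
\[
p(x)\bigl(\theta(x)-\gamma(x)\bigr)\ \geq\ \theta(x)\bigl(\theta(x)-\gamma(x)\bigr)\ \geq\ \theta(x)\bigl(\beta(x)+\varepsilon_{0}\bigr)\ >\ \theta(x)\beta(x),
\]
so indeed $s(x)\leq\theta(x)$. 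From there Lemma~\ref{var1} (the paper's variable-exponent analogue of the elementary bound $\int|u|^{s(x)}\leq\int|u|^{\theta(x)}+|\Omega|$) converts this into exactly the displayed estimate you wrote, and the inclusion follows. The remarks about (2.5) and Lemma~\ref{norm} for a quantitative bound are correct but, as you note, not needed for the bare inclusion statement.
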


\begin{definition}
\label{notban} Let $\eta $ $\in M_{0}\left( \Omega \right),$ we introduce $%
L^{1,\text{ }\eta \left( x\right) }\left( \Omega \right)$ the class of
functions $u:\Omega \rightarrow \mathbb{R} $ 
\begin{equation*}
L^{1,\text{ }\eta \left( x\right) }\left( \Omega \right) \equiv \left\{ u\in
L^{1}\left( \Omega \right) |\text{ }D_{i}u\in L^{\eta \left( x\right)
}\left( \Omega \right) ,\text{ }i=\overline{1,n}\right\}.\footnote{%
This space is not Banach differently from the space $W^{1,\text{ }\eta
\left( x\right) }\left( \Omega \right)$ \cite{DIE2} }
\end{equation*}
\end{definition}

\begin{theorem}
\label{bijec} Let $\gamma ,$ $\beta \in M_{0}\left( \Omega \right)\cap
C^{1}\left( \bar{\Omega}\right) $ and $L^{1,\text{ }\beta \left( x\right)
}\left( \Omega \right) $ be the space given in Definition \ref{notban}. Then
the function $\varphi :\Omega \times \mathbb{R} \longrightarrow \mathbb{R},$ 
$\varphi \left( x,t\right) :=\left\vert t\right\vert ^{\frac{\gamma \left(
x\right) }{\beta \left( x\right) }}t$ is a bijective mapping between $%
S_{1,\gamma \left( x\right) ,\beta \left( x\right) ,\theta \left( x\right)
}\left( \Omega \right) $ and $L^{1,\text{ }\beta \left( x\right) }\left(
\Omega \right) \cap L^{\psi \left( x\right) }\left( \Omega \right) $ where $%
\psi \left( x\right) :=\frac{\theta \left( x\right) \beta \left( x\right) }{%
\gamma \left( x\right) +\beta \left( x\right) }.$
\end{theorem}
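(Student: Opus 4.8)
The plan is to imitate the proof of the constant-exponent version (Theorem \ref{sbthom}), where $g(t)=|t|^{\alpha/\beta}t$ is shown to be a bijection $S_{1,\alpha,\beta}(\Omega)\to W^{1,\beta}(\Omega)$, and to track carefully how the pointwise-in-$x$ exponent $\gamma(x)/\beta(x)$ interacts with differentiation. First I would record that for each fixed $x$, the real map $t\mapsto |t|^{\gamma(x)/\beta(x)}t$ is a strictly increasing bijection of $\mathbb{R}$ onto $\mathbb{R}$ (since $\gamma(x)/\beta(x)\ge 0$ is finite, $\gamma,\beta\in M_0(\Omega)$), with inverse $s\mapsto |s|^{\beta(x)/(\gamma(x)+\beta(x))}\,\mathrm{sgn}(s)$ up to the obvious exponent bookkeeping; hence $\varphi$ is injective on functions and the only real content is that $\varphi$ maps one space into the other and that $\varphi^{-1}$ does as well. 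Set $v:=\varphi(\cdot,u)=|u|^{\gamma/\beta}u$. The key computation is the chain rule: formally
\begin{equation*}
D_i v = \Bigl(\tfrac{\gamma(x)}{\beta(x)}+1\Bigr)|u|^{\gamma(x)/\beta(x)}D_i u + |u|^{\gamma(x)/\beta(x)}u\,\log|u|\;D_i\!\Bigl(\tfrac{\gamma(x)}{\beta(x)}\Bigr),
\end{equation*}
and this is exactly the point where the hypothesis $\gamma,\beta\in C^1(\bar\Omega)$ enters: it makes $D_i(\gamma/\beta)$ a bounded function, and on the bounded domain $\Omega$ the extra term $|u|^{\gamma(x)/\beta(x)}u\log|u|$ is controlled by $|u|^{\gamma(x)/\beta(x)+1+\varepsilon}$ for any $\varepsilon>0$, which is absorbed using $\theta(x)\ge\gamma(x)+\beta(x)+\varepsilon_0$ and $u\in L^{\theta(x)}(\Omega)$ together with the modular/Luxemburg equivalence \eqref{minmax}. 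The first term is $|u|^{\gamma(x)/\beta(x)}D_i u$ up to a bounded factor, whose $\beta(x)$-th power is $|u|^{\gamma(x)}|D_i u|^{\beta(x)}$, finite by definition of $S_{1,\gamma,\beta,\theta}$. So $D_i v\in L^{\beta(x)}(\Omega)$, i.e. $v\in L^{1,\beta(x)}(\Omega)$; and $|v|^{\psi(x)}=|u|^{(\gamma(x)/\beta(x)+1)\psi(x)}=|u|^{\theta(x)}$ by the definition $\psi=\theta\beta/(\gamma+\beta)$, giving $v\in L^{\psi(x)}(\Omega)$. This shows $\varphi$ maps into the target.

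For surjectivity I would run the same argument in reverse with $u:=|v|^{\beta/(\gamma+\beta)}\,\mathrm{sgn}(v)$ (the pointwise inverse), starting from $v\in L^{1,\beta(x)}(\Omega)\cap L^{\psi(x)}(\Omega)$: the identity $|u|^{\theta(x)}=|v|^{\psi(x)}$ gives $u\in L^{\theta(x)}(\Omega)\subset L^1(\Omega)$, and differentiating $u$ as a $C^1$-in-$x$ power of $v$ produces a main term $|v|^{\beta(x)/(\gamma(x)+\beta(x))-1}D_i v$ times a bounded factor plus a logarithmic correction term; raising the main term to the power $\beta(x)$ and checking the exponent arithmetic shows $|u|^{\gamma(x)}|D_iu|^{\beta(x)}\le C\,|D_iv|^{\beta(x)}$ up to lower-order terms, hence integrable, so $u\in S_{1,\gamma,\beta,\theta}(\Omega)$ with $\varphi(\cdot,u)=v$. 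Injectivity is immediate from the pointwise strict monotonicity noted above.

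The main obstacle is not the algebra but the analytic justification of the chain rule for $D_i v$ when $u$ is only a weak-derivative object and the exponent $\gamma(x)/\beta(x)$ is $x$-dependent: one cannot just cite a standard composition lemma. The cleanest route is an approximation argument — approximate $u$ by smooth functions in the $[\cdot]_{S_{\gamma,\beta,\theta}}$-pseudo-norm (using that this is, via Theorem \ref{gom} and Proposition \ref{oner}, compatible with the modular $\Re^{\gamma,\beta,\theta}$ and with $W^{1,p(x)}$-density), apply the classical chain rule to the smooth approximants where the $C^1$-regularity of $\gamma,\beta$ makes every term legitimate, and pass to the limit in $L^{\beta(x)}$ using Lemma \ref{denknorm} (modular convergence $\Leftrightarrow$ norm convergence) to control all three pieces simultaneously; the logarithmic factor requires a short lemma that $t\mapsto |t|^{s}|\log|t||$ is dominated by $|t|^{s-\varepsilon}+|t|^{s+\varepsilon}$ on $\mathbb{R}$, after which the $\varepsilon_0$-gap in the hypothesis on $\theta$ does the absorbing. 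Everything else — the exponent identities $(\gamma/\beta+1)\psi=\theta$ and $(\beta/(\gamma+\beta))\cdot\theta=\psi$, and the boundedness of $D_i(\gamma/\beta)$ on $\bar\Omega$ — is routine.
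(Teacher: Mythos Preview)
The paper does not actually prove Theorem~\ref{bijec} in its body; it is stated in Section~2 as a preliminary result with the proof deferred to \cite{S-S-1,S-S-2}. That said, the argument the authors have in mind is visible elsewhere in the paper: identity~(4.7) in the proof of the homeomorphism theorem displays exactly your chain-rule decomposition of $D_i\varphi(u)$ into the main term $\varphi_t'(u)D_iu$ and the logarithmic correction $\varphi(u)\log|u|\cdot D_i(\gamma/\beta)$, and Lemmas~\ref{var2}--\ref{var3} are set up precisely to absorb $\int_\Omega |u|^{\gamma(x)+\beta(x)}|\log|u||^{\beta(x)}dx$ into $\int_\Omega|u|^{\theta(x)}dx$ via the $\varepsilon_0$-gap. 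Your proposal follows this route and is correct in outline; the exponent identities $(\gamma/\beta+1)\psi=\theta$ and the collapse of $|u|^{\gamma/\beta}D_iu$ to $\tfrac{\beta}{\gamma+\beta}D_iv$ plus a $v\log|v|$ term in the reverse direction are exactly the computations needed.

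Two points worth tightening. First, you invoke $\theta(x)\ge\gamma(x)+\beta(x)+\varepsilon_0$, which is the hypothesis of Proposition~\ref{oner} rather than of Theorem~\ref{bijec} itself; you should make explicit that you are importing it (the paper does the same implicitly when it applies Lemma~\ref{var3} in the homeomorphism proof). Second, the approximation scheme you sketch for justifying the weak chain rule is heavier than needed: since $\gamma/\beta\ge 0$ the map $t\mapsto|t|^{\gamma(x)/\beta(x)}t$ is $C^1$ in $t$ with $\varphi_t'(x,0)=0$ when $\gamma(x)>0$, and $\gamma,\beta\in C^1(\bar\Omega)$ makes the $x$-dependence Lipschitz, so a direct appeal to the chain rule for Sobolev functions composed with $C^1$ outer maps (applied on the level sets and then passing to the limit) suffices; the density of smooth functions in $S_{1,\gamma,\beta,\theta}$ is not something the paper establishes and would itself require justification.
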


\begin{theorem}
\label{kompakt} Suppose that conditions of Theorem \ref{bijec} are
satisfied. Let $p\in M_{0}\left( \Omega \right) ,$ additionally $1\leq \beta
^{-}\leq \beta \left( x\right) <n,$ $x\in \Omega $ holds and for $%
\varepsilon >0,$ the inequality 
\begin{equation*}
p\left( x\right) +\varepsilon <\tfrac{n\left( \gamma \left( x\right) +\beta
\left( x\right) \right) }{n-\beta \left( x\right) },\text{ }x\in \Omega
\end{equation*}%
is satisfied. Then the following compact embedding 
\begin{equation*}
S_{1,\gamma \left( x\right) ,\beta \left( x\right) ,\theta \left( x\right)
}\left( \Omega \right) \hookrightarrow L^{p\left( x\right) }\left( \Omega
\right)
\end{equation*}
exists.
\end{theorem}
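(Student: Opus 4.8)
The plan is to transport the problem, via the bijection $\varphi(x,t)=|t|^{\gamma(x)/\beta(x)}t$ of Theorem \ref{bijec}, onto the variable-exponent Sobolev space $W^{1,\beta(x)}(\Omega)$ and then invoke the classical compact Sobolev embedding there. Let $\{u_k\}$ be bounded in $S_{1,\gamma(x),\beta(x),\theta(x)}(\Omega)$ for the pseudo-norm $[\cdot]_{S_{\gamma,\beta,\theta}}$; by Lemma \ref{norm} the modulars $\Re^{\gamma,\beta,\theta}(u_k)$ are uniformly bounded, i.e. $\int_\Omega|u_k|^{\theta(x)}\,dx$ and $\sum_i\int_\Omega|u_k|^{\gamma(x)}|D_iu_k|^{\beta(x)}\,dx$ are bounded. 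Put $v_k:=\varphi(x,u_k)$. Since $\gamma,\beta\in C^1(\bar\Omega)$, the chain rule gives $|D_iv_k|\le\bigl(\tfrac{\gamma(x)}{\beta(x)}+1\bigr)|u_k|^{\gamma(x)/\beta(x)}|D_iu_k|+\bigl|D_i(\gamma/\beta)\bigr|\,|u_k|^{\gamma(x)/\beta(x)+1}\bigl|\ln|u_k|\bigr|$, and raising to the power $\beta(x)$ and using $\tfrac{\gamma}{\beta}\beta=\gamma$, $\bigl(\tfrac{\gamma}{\beta}+1\bigr)\beta=\gamma+\beta$ converts the first summand into a bounded multiple of $|u_k|^{\gamma}|D_iu_k|^{\beta}$.

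The main technical obstacle is the logarithmic term produced by the $x$-dependence of the exponents. I would absorb it using $|\ln t|^{\beta(x)}\le1+C_\delta(t^{-\delta}+t^{\delta})$ for $t>0$, with $\delta=\varepsilon_0$, where $\varepsilon_0$ is the gap in $\theta\ge\gamma+\beta+\varepsilon_0$: then $|u_k|^{\gamma+\beta}\bigl|\ln|u_k|\bigr|^{\beta(x)}\le C\bigl(1+|u_k|^{\gamma+\beta}+|u_k|^{\theta}\bigr)$, which integrates to a bounded quantity. Hence $\|D_iv_k\|_{L^{\beta(x)}(\Omega)}$ is bounded; since moreover $\int_\Omega|v_k|^{\psi(x)}\,dx=\int_\Omega|u_k|^{\theta(x)}\,dx$ (because $\bigl(\tfrac{\gamma}{\beta}+1\bigr)\psi=\theta$) and $\psi>\beta$ yields $L^{\psi(x)}(\Omega)\subset L^{\beta(x)}(\Omega)$ on the finite-measure set $\Omega$, the sequence $\{v_k\}$ is bounded in $W^{1,\beta(x)}(\Omega)$. (The same estimate applied to a single $u$ shows the inclusion $S_{1,\gamma(x),\beta(x),\theta(x)}(\Omega)\subset L^{p(x)}(\Omega)$ is well defined.)

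Now $\beta\in C^1(\bar\Omega)$ is log-H\"older continuous and $1\le\beta^-\le\beta(x)<n$, so $W^{1,\beta(x)}(\Omega)$ embeds compactly into $L^{q(x)}(\Omega)$ for every $q\in M_0(\Omega)$ with $q(x)\le\frac{n\beta(x)}{n-\beta(x)}-c$, $c>0$. I take $q(x):=\frac{(p(x)+\varepsilon')\beta(x)}{\gamma(x)+\beta(x)}$: then $q\in M_0(\Omega)$ (indeed $q\ge\beta\ge1$, since $p\ge\theta\ge\gamma+\beta$ and $\beta\ge1$), and, because the hypothesis reads $p(x)+\varepsilon<\frac{n(\gamma(x)+\beta(x))}{n-\beta(x)}$ while $(\gamma+\beta)/\beta$ is bounded, a sufficiently small choice of $\varepsilon'\in(0,\varepsilon)$ and $c$ keeps $q$ uniformly below $\frac{n\beta}{n-\beta}$. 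Thus a subsequence of $\{v_k\}$ converges in $L^{q(x)}(\Omega)$ and, along a further subsequence, $v_k\to v$ a.e.; since the inverse $\varphi^{-1}(x,\cdot)$ is continuous, $u_k\to u:=\varphi^{-1}(x,v)$ a.e. in $\Omega$.

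Finally, $\bigl(\tfrac{\gamma}{\beta}+1\bigr)q=p+\varepsilon'$ gives $\int_\Omega|u_k|^{p(x)+\varepsilon'}\,dx=\int_\Omega|v_k|^{q(x)}\,dx\le C$, so $\{u_k\}$ is bounded in $L^{p(x)+\varepsilon'}(\Omega)$ and the family $\{|u_k|^{p(x)}\}$ is uniformly integrable (from $\int_{\{|u_k|\ge\lambda\}}|u_k|^{p(x)}\,dx\le\lambda^{-\varepsilon'}\int_\Omega|u_k|^{p(x)+\varepsilon'}\,dx$). Uniform integrability together with the a.e. convergence $u_k\to u$ and Egorov's theorem yields $\sigma_p(u_k-u)\to0$, hence $\|u_k-u\|_{L^{p(x)}(\Omega)}\to0$ by Lemma \ref{denknorm}. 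So every bounded sequence in $S_{1,\gamma(x),\beta(x),\theta(x)}(\Omega)$ has a subsequence converging in $L^{p(x)}(\Omega)$, which is the asserted compact embedding. Besides the logarithmic term, the only other point needing care is the arithmetic that turns the near-critical condition on $p$ into a genuinely subcritical target exponent $q$ for the Sobolev embedding.
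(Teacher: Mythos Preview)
The paper does not actually contain a proof of Theorem~\ref{kompakt}: it is listed among the preliminary results in Section~2 with the remark that ``some of the results and its proofs can be found in \cite{S-S-1,S-S-2}.'' So there is no in-paper argument to compare yours to. That said, the machinery the paper develops around this statement --- the bijection $\varphi$ of Theorem~\ref{bijec}, the control of the logarithmic defect via Lemma~\ref{var3}, and the later homeomorphism theorem --- makes it clear that the intended route is exactly the one you take: push $u_k$ to $v_k=\varphi(u_k)$, land in $W^{1,\beta(x)}(\Omega)$, apply the variable-exponent Sobolev compact embedding, and pull back by Vitali/uniform integrability. Your outline is correct and is almost certainly the argument of the cited references.

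One small correction: you justify $q(x)=\dfrac{(p(x)+\varepsilon')\beta(x)}{\gamma(x)+\beta(x)}\ge 1$ by writing ``$p\ge\theta\ge\gamma+\beta$,'' but no relation $p\ge\theta$ is assumed in the theorem; the only hypothesis on $p$ is the subcriticality bound. This does not damage the argument, since if $q$ drops below $1$ you can simply replace it by $\tilde q:=\max\{q,1\}$: the compact embedding $W^{1,\beta(x)}(\Omega)\hookrightarrow L^{\tilde q(x)}(\Omega)$ still holds (you only need $\tilde q$ uniformly below $n\beta/(n-\beta)$, which follows from the same arithmetic), and on a bounded domain $L^{\tilde q(x)}(\Omega)\subset L^{q(x)}(\Omega)$, so the uniform bound $\int_\Omega|u_k|^{p(x)+\varepsilon'}dx=\int_\Omega|v_k|^{q(x)}dx\le C$ survives. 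With that patch your proof is complete; your handling of the logarithmic term is essentially Lemma~\ref{var3} of the paper.
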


\section{Some relations between constant exponent pn-spaces and Sobolev
spaces}

In this section, we give some embedding results for constant exponent $pn$%
-spaces with proofs.

\begin{theorem}
\label{sbtgom1} Let $\alpha \geq 0,$ $\beta \geq 1.$ Then for all $p$
satisfying the followings conditions

\begin{itemize}
\item[(i)] If $\beta =n,$ $p>\beta .$

\item[(ii)] If $\beta >n,$ $p\geq \beta .$

\item[(iii)] If $\beta <n,$ $p\geq \frac{n(\alpha +\beta )}{\alpha +n}, $
\end{itemize}

the embedding 
\begin{equation}  \label{3.1}
W_{0}^{1,p}(\Omega )\subset \mathring{S}_{1,\alpha ,\beta }(\Omega ). 
\tag{3.1}
\end{equation}%
holds.
\end{theorem}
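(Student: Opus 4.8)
The plan is to show directly that every $u\in W_0^{1,p}(\Omega)$ lies in $\mathring S_{1,\alpha,\beta}(\Omega)$. On the bounded domain $\Omega$ one has $W_0^{1,p}(\Omega)\subset L^p(\Omega)\subset L^1(\Omega)$, and the trace of any $u\in W_0^{1,p}(\Omega)$ vanishes on $\partial\Omega$, so — using that $\mathring S_{1,\alpha,\beta}(\Omega)$ is, equivalently, the set of $u\in L^1(\Omega)$ with vanishing trace and $\sum_{i=1}^n\int_\Omega|u|^\alpha|D_iu|^\beta\,dx<\infty$ — it remains only to bound each $I_i:=\int_\Omega|u|^\alpha|D_iu|^\beta\,dx$. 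If $\alpha=0$ this reduces to $D_iu\in L^\beta(\Omega)$, which holds in all three cases since $p\ge\beta$ there (for (iii) note $\tfrac{n(\alpha+\beta)}{\alpha+n}\ge\beta\iff\beta\le n$) together with $|\Omega|<\infty$, giving $L^p(\Omega)\hookrightarrow L^\beta(\Omega)$. So assume henceforth $\alpha>0$; then in each of (i)--(iii) one checks that $p>\beta$ strictly (in (iii), $p=\beta$ would force $\alpha(\beta-n)\ge 0$, impossible for $\alpha>0$ and $\beta<n$).

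For $\alpha>0$ and $p>\beta$, apply H\"older's inequality to $I_i$ with conjugate exponents $r:=p/\beta>1$ and $r':=p/(p-\beta)$:
\[
I_i\le\Big(\int_\Omega|u|^{\alpha r'}\,dx\Big)^{1/r'}\Big(\int_\Omega|D_iu|^{\beta r}\,dx\Big)^{1/r}
=\|u\|_{L^{\alpha p/(p-\beta)}(\Omega)}^{\alpha}\,\|D_iu\|_{L^p(\Omega)}^{\beta}.
\]
The second factor is finite because $u\in W_0^{1,p}(\Omega)$, so everything reduces to showing $u\in L^q(\Omega)$ with $q:=\alpha p/(p-\beta)$, and this will be supplied by a Sobolev embedding.

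Now split into the three regimes. In (i) we have $p>\beta=n$ and in (ii) we have $p\ge\beta>n$; in both cases $p>n$, so $W_0^{1,p}(\Omega)\hookrightarrow L^\infty(\Omega)$ and hence $u\in L^q(\Omega)$ for the finite exponent $q$, whence $I_i<\infty$. In (iii), $\beta<n$: if $p\ge n$ then $W_0^{1,p}(\Omega)\hookrightarrow L^s(\Omega)$ for every finite $s$, so again $u\in L^q(\Omega)$; if $p<n$, use the Sobolev embedding $W_0^{1,p}(\Omega)\hookrightarrow L^{p^*}(\Omega)$ with $p^*=np/(n-p)$, and note that (since $|\Omega|<\infty$) it is enough to have $q\le p^*$, i.e. $\tfrac{\alpha p}{p-\beta}\le\tfrac{np}{n-p}$; clearing denominators this is $\alpha(n-p)\le n(p-\beta)$, i.e. $p(\alpha+n)\ge n(\alpha+\beta)$, i.e. precisely the hypothesis $p\ge\tfrac{n(\alpha+\beta)}{\alpha+n}$. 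Summing $I_i$ over $i=1,\dots,n$ then finishes the argument.

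The proof is no more than H\"older's inequality followed by the classical Sobolev embedding theorem, so there is no genuinely hard step; the only point demanding attention is the case analysis at the endpoints ($\alpha=0$, $p=\beta$, $p=n$) and the observation that the threshold $\tfrac{n(\alpha+\beta)}{\alpha+n}$ in (iii) is exactly the value of $p$ at which $\tfrac{\alpha p}{p-\beta}$ meets the Sobolev conjugate $p^*$ — this algebraic identity is what makes condition (iii) sharp, and I expect it to be the ``main obstacle'', routine though it is. (One could invoke Lemma \ref{sbt1} to also estimate $\int_\Omega|u|^{\alpha+\beta}\,dx$, but with vanishing trace that term is controlled by the $I_i$ and needs no separate treatment.)
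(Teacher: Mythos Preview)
Your argument is essentially the paper's own: bound $\int_\Omega|u|^\alpha|D_iu|^\beta\,dx$ by separating the two factors (the paper uses Young's inequality where you use H\"older's---a cosmetic difference yielding an additive rather than multiplicative estimate) and then invoke the Sobolev embedding to control the $|u|$-part, with the same algebraic verification that $\tfrac{\alpha p}{p-\beta}\le\tfrac{np}{n-p}$ in case~(iii).

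One small slip: your claim that $p>\beta$ strictly in all three cases when $\alpha>0$ fails in (ii), where $p=\beta>n$ is permitted by the hypothesis, and then your H\"older exponent $r'=p/(p-\beta)$ is undefined. But this is harmless: in (ii) you already have $p\ge\beta>n$, hence $W_0^{1,p}(\Omega)\hookrightarrow L^\infty(\Omega)$, so you can bypass H\"older entirely and write $I_i\le\|u\|_\infty^\alpha\,\|D_iu\|_{L^\beta}^\beta<\infty$ directly (indeed this is how the paper handles both (i) and (ii)). With that one-line fix the proof is complete.
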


\begin{proof}
The cases (i) and (ii) are evident as by virtue of the Sobolev imbedding
theorems occurs the inclusion 
\begin{equation*}
W_{0}^{1,p}(\Omega )\subset C(\bar{\Omega}).
\end{equation*}

For the last case (iii), if $\beta <n$ and $p>n$ then the proof is same with
the proofs of the cases (i) and (ii).

On the other side let $\beta <n$ and $p\in \left[ \frac{n(\alpha +\beta )}{%
\alpha +n},n\right) ,$ by Sobolev imbedding theorems we have, 
\begin{equation*}  \label{3.2}
W_{0}^{1,p}(\Omega )\subset L^{\tilde{q}}(\Omega )  \tag{3.2}
\end{equation*}%
for all $\tilde{q}\in \left[ 1,\frac{np}{n-p}\right] .$ Hence for $u\in
W_{0}^{1,p}(\Omega )$ we have the following estimate by Young's inequality 
\begin{equation}  \label{3.3}
\int\limits_{\Omega }\left\vert u\right\vert ^{\alpha }\left\vert
D_{i}u\right\vert ^{\beta }dx\leq \left( \frac{p-\beta }{p}\right)
\int\limits_{\Omega }\left\vert u\right\vert ^{\frac{\alpha p}{p-\beta }%
}dx+\left( \frac{p}{\beta }\right) \int\limits_{\Omega }\left\vert
D_{i}u\right\vert ^{p}dx.  \tag{3.3}
\end{equation}%
We deduce from the equation $\frac{\alpha p}{p-\beta }-\frac{np}{n-p}=\frac{p%
\left[ n(\alpha +\beta )-p(\alpha +n)\right] }{(p-\beta )(n-p)}$ and $p\in %
\left[ \frac{n(\alpha +\beta )}{\alpha +n},n\right) $ that 
\begin{equation*}
\frac{\alpha p}{p-\beta }\leq \frac{np}{n-p}
\end{equation*}%
Thus by \eqref{3.2} and \eqref{3.3} we arrive at 
\begin{equation*}
\left[ u\right] _{\mathring{S}_{1,\alpha ,\beta }}^{\alpha +\beta
}=\int\limits_{\Omega }\left\vert u\right\vert ^{\alpha }\left\vert
D_{i}u\right\vert ^{\beta }dx\leq \tilde{C}\left\Vert u\right\Vert
_{W_{0}^{1,p}(\Omega )}^{\frac{\alpha p}{p-\beta }}+\tilde{C}_{1}\left\Vert
u\right\Vert _{W_{0}^{1,p}(\Omega )}^{p}
\end{equation*}%
which implies 
\begin{equation*}
\left[ u\right] _{\mathring{S}_{1,\alpha ,\beta }}^{\alpha +\beta }\leq 
\tilde{C}_{2}\left\Vert u\right\Vert _{W_{0}^{1,p}(\Omega )}^{p}+C_{3}.
\end{equation*}%
To complete the proof if $p=n>\beta ,$ by employing the embedding $%
W_{0}^{1,p}(\Omega )\subset L^{r}(\Omega ),$ $1\leq r<\infty $ one can
obtain the desired result by the help of above approach.
\end{proof}

\begin{remark}
Under the conditions of Theorem \ref{sbtgom1}, if $p\geq \alpha +\beta $ is
satisfied then we have the imbedding \eqref{3.1} independently from
dimension of $\Omega $.
\end{remark}

Actually for $u\in W_{0}^{1,p}(\Omega )$, we deduce from Lemma \ref{sbt2}
that 
\begin{equation*}
\int\limits_{\Omega }\left\vert u\right\vert ^{\alpha }\left\vert
D_{i}u\right\vert ^{\beta }dx\leq C\int\limits_{\Omega }\left\vert
D_{i}u\right\vert ^{p}dx+C_{1},
\end{equation*}
which yields 
\begin{equation*}
\left[ u\right] _{\mathring{S}_{1,\alpha ,\beta }}^{\alpha +\beta }\leq
C\left\Vert u\right\Vert _{W_{0}^{1,p}(\Omega )}^{p}+C_{1}.
\end{equation*}

\begin{theorem}
Suppose that $\beta>\alpha\geq 0,$ $\beta \geq 2.$ Then for all $p$
satisfying the followings

\begin{itemize}
\item[(i)] If $\alpha +\beta=n $ then $1\leq p<2\beta $

\item[(ii)] If $\alpha +\beta>n $ then $1\leq p\leq 2\beta $

\item[(iii)] If $\alpha +\beta<n $ then $1\leq p\leq \frac{2n\beta (\alpha
+\beta )}{2n\beta -(\alpha +\beta )(\beta -\alpha )}$
\end{itemize}

the embedding 
\begin{equation}
\mathring{S}_{2,\alpha ,\beta }\left( \Omega \right) \subset
W_{0}^{1,p}(\Omega )  \tag{3.4}
\end{equation}%
holds.
\end{theorem}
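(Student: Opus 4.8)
The plan is to pass from second-order information to first-order information via Lemma \ref{sbt3}, used twice, controlling the Lebesgue exponent of $u$ in between by the classical Sobolev embedding. Recall that, by the equivalent description recorded above, $u\in\mathring S_{2,\alpha,\beta}(\Omega)$ precisely when $u\in L^{1}(\Omega)$, $u|_{\partial\Omega}=0$, and $\sum_{i=1}^{n}\int_{\Omega}|u|^{\alpha}|D_{i}^{2}u|^{\beta}\,dx<\infty$; by a standard density argument it suffices to prove the estimates below for $u\in C^{1}(\bar\Omega)\cap C^{2}(\Omega)$ with $u|_{\partial\Omega}=0$. First I would apply Lemma \ref{sbt3} with the substitution $\alpha\mapsto 0$, $\beta_{0}\mapsto\alpha$, $\beta_{1}\mapsto\beta$: the hypotheses $\beta_{0}+\beta_{1}=\alpha+\beta\ge 2$ (since $\beta\ge 2$) and $\beta=\beta_{1}\ge\beta_{0}=\alpha\ge 0$ hold, and the boundary integral in (2.3) becomes $\int_{\partial\Omega}\big(|u|^{\alpha+\beta}+|u|\,|D_{i}u|^{\alpha+\beta-1}\big)\,dx'$, which vanishes because each term carries a positive power of $|u|$. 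This yields $\int_{\Omega}|D_{i}u|^{\alpha+\beta}\,dx\le C_{6}\int_{\Omega}|u|^{\alpha}|D_{i}^{2}u|^{\beta}\,dx<\infty$, so $D_{i}u\in L^{\alpha+\beta}(\Omega)$ for $i=\overline{1,n}$. Combined with $u\in L^{1}(\Omega)$ and $u|_{\partial\Omega}=0$, a finite iteration of Sobolev embeddings gives $u\in W_{0}^{1,\alpha+\beta}(\Omega)$.

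The next step is to raise the integrability of $u$ itself by applying Sobolev's theorem to $W_{0}^{1,\alpha+\beta}(\Omega)$: in case (ii) ($\alpha+\beta>n$) one gets $u\in C(\bar\Omega)\subset L^{\infty}(\Omega)$; in case (i) ($\alpha+\beta=n$), $u\in L^{r}(\Omega)$ for every $r<\infty$; in case (iii) ($\alpha+\beta<n$), $u\in L^{q^{*}}(\Omega)$ with $q^{*}=n(\alpha+\beta)/(n-(\alpha+\beta))$. Now I would apply Lemma \ref{sbt3} a second time, with $\alpha\mapsto 0$, $\beta_{0}\mapsto\rho$, $\beta_{1}\mapsto\rho$ for a parameter $\rho\in(0,\beta]$ to be fixed; the boundary term again vanishes, giving $\int_{\Omega}|D_{i}u|^{2\rho}\,dx\le C_{6}\int_{\Omega}|u|^{\rho}|D_{i}^{2}u|^{\rho}\,dx$. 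Writing $|u|^{\rho}|D_{i}^{2}u|^{\rho}=\big(|u|^{\alpha}|D_{i}^{2}u|^{\beta}\big)^{\rho/\beta}|u|^{\rho(\beta-\alpha)/\beta}$ and using Hölder's inequality with exponents $\beta/\rho$ and $\beta/(\beta-\rho)$ bounds the right-hand side by a constant times $\big(\int_{\Omega}|u|^{\alpha}|D_{i}^{2}u|^{\beta}\big)^{\rho/\beta}\,\|u\|_{L^{\rho(\beta-\alpha)/(\beta-\rho)}(\Omega)}^{\rho(\beta-\alpha)/\beta}$, which is finite as soon as $u\in L^{\rho(\beta-\alpha)/(\beta-\rho)}(\Omega)$. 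In case (ii) one takes $\rho=\beta$ and estimates instead $\int_{\Omega}|u|^{\beta}|D_{i}^{2}u|^{\beta}\le\|u\|_{L^{\infty}(\Omega)}^{\beta-\alpha}\int_{\Omega}|u|^{\alpha}|D_{i}^{2}u|^{\beta}$, so $D_{i}u\in L^{2\beta}(\Omega)$; in case (i) every $\rho\in[1,\beta)$ is admissible, so $D_{i}u\in L^{p}(\Omega)$ for all $p<2\beta$; in case (iii) the admissibility requirement $\rho(\beta-\alpha)/(\beta-\rho)\le q^{*}$ rearranges to exactly $2\rho\le\frac{2n\beta(\alpha+\beta)}{2n\beta-(\alpha+\beta)(\beta-\alpha)}$, so taking $\rho$ to make this an equality (one checks $\rho=p/2<\beta$, equivalently $p<2\beta$, which holds precisely because $\alpha+\beta<n$) gives $D_{i}u\in L^{p}(\Omega)$ for the claimed endpoint $p$.

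In all three cases $D_{i}u\in L^{p}(\Omega)$, $i=\overline{1,n}$, for the asserted range of $p$ (smaller exponents being covered since $\Omega$ is bounded); then, exactly as in the first step, $u\in L^{1}(\Omega)$ together with $\nabla u\in L^{p}(\Omega)$ and $u|_{\partial\Omega}=0$ forces $u\in W_{0}^{1,p}(\Omega)$, which proves $\mathring S_{2,\alpha,\beta}(\Omega)\subset W_{0}^{1,p}(\Omega)$. I expect the only genuinely delicate point to be the second application of Lemma \ref{sbt3} in case (iii): one must choose $\rho$ so that the exponent in the Hölder split matches the Sobolev exponent $q^{*}$ obtained in the first step, and it is exactly this balance that produces the expression $\frac{2n\beta(\alpha+\beta)}{2n\beta-(\alpha+\beta)(\beta-\alpha)}$ in (iii). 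The remaining ingredients — the density reduction to smooth functions and the fact that $u\in L^{1}(\Omega)$ with $\nabla u\in L^{s}(\Omega)$ and zero trace forces $u\in W_{0}^{1,s}(\Omega)$ — are routine.
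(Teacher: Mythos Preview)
Your argument is correct, and the first step coincides with the paper's: apply Lemma~\ref{sbt3} with $\alpha\mapsto 0$, $\beta_{0}=\alpha$, $\beta_{1}=\beta$ to obtain $\int_{\Omega}|D_{i}u|^{\alpha+\beta}\le C\int_{\Omega}|u|^{\alpha}|D_{i}^{2}u|^{\beta}$, then use the Sobolev embedding of $W_{0}^{1,\alpha+\beta}(\Omega)$ to control $\|u\|_{q}$ in each of the three cases.

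The second step, however, is genuinely different. The paper does \emph{not} re-invoke Lemma~\ref{sbt3}; instead it integrates by parts directly,
\[
\int_{\Omega}|D_{i}u|^{p}\,dx=(p-1)\int_{\Omega}u\,D_{i}^{2}u\,|D_{i}u|^{p-2}\,dx,
\]
splits $|u|=|u|^{(\beta-\alpha)/\beta}|u|^{\alpha/\beta}$, applies H\"older with the triple of exponents $\bigl(\tfrac{p\beta}{2\beta-p},\beta,\tfrac{p}{p-2}\bigr)$, and then absorbs the resulting factor $\|D_{i}u\|_{p}^{p-2}$ via Young's inequality. Your route applies Lemma~\ref{sbt3} a second time with $\beta_{0}=\beta_{1}=\rho$, which has the effect of performing the same integration by parts \emph{and} the absorption in one stroke; what remains is a two-term H\"older split of $|u|^{\rho}|D_{i}^{2}u|^{\rho}$ against $(|u|^{\alpha}|D_{i}^{2}u|^{\beta})^{\rho/\beta}$ and a pure power of $|u|$. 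The admissibility condition $\rho(\beta-\alpha)/(\beta-\rho)\le q^{*}$ you obtain is algebraically identical to the paper's condition $\tfrac{p(\beta-\alpha)}{2\beta-p}\le q^{*}$ under the identification $p=2\rho$, so the endpoint in~(iii) drops out the same way. Your version is a bit cleaner (no absorption step, only one H\"older pair), at the price of needing $\rho\ge 1$ to meet the hypothesis $\beta_{0}+\beta_{1}\ge 2$ of Lemma~\ref{sbt3}; since you already have $D_{i}u\in L^{\alpha+\beta}$ from the first step and $\alpha+\beta\ge 2$, the small-$p$ range is covered anyway.
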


\begin{proof}
Considering these conditions, by Lemma \ref{sbt3} when $1\leq p\leq \alpha
+\beta $ following inequality holds independently from the dimension $n$ 
\begin{equation*}
\int\limits_{\Omega }\left\vert D_{i}u\right\vert ^{p}dx\leq
C\int\limits_{\Omega }\left\vert u\right\vert ^{\alpha }\left\vert
D_{i}^{2}u\right\vert ^{\beta }dx+C_{1}  \tag{3.5}
\end{equation*}%
that yields the imbedding (3.4). So if $1\leq p\leq 2$ then $1\leq p\leq
\alpha +\beta $ which concludes the proof.

First we prove (3.4) in line with conditions of (i). Let $\alpha +\beta =n$
and $p>2$ (from now on we assume $p>2$)

For $u\in \mathring{S}_{2,\alpha ,\beta }\left( \Omega \right) ,$ by Lemma %
\ref{sbt3} we have the following estimate 
\begin{equation}
\int\limits_{\Omega }\left\vert D_{i}u\right\vert ^{\alpha +\beta }dx\leq
C\int\limits_{\Omega }\left\vert u\right\vert ^{\alpha }\left\vert
D_{i}^{2}u\right\vert ^{\beta }dx.  \tag{3.6}
\end{equation}%
On the other hand from Sobolev imbedding theorems, 
\begin{equation}
W_{0}^{1,\alpha +\beta }\left( \Omega \right) \subset L^{q}\left( \Omega
\right) \text{ }\forall q,\text{ }q\in \left[ 1,\infty \right)  \tag{3.7}
\end{equation}
Hence from (3.6) and (3.7) for all $q$ satisfying $1\leq q<\infty $ we get 
\begin{align*}
\left\Vert u\right\Vert _{q}& \leq \tilde{C}\left( \sum_{i=1}^{n}\left\Vert
D_{i}u\right\Vert _{\alpha +\beta }^{\alpha +\beta }\right) ^{\frac{1}{%
\alpha +\beta }} \\
& \leq \tilde{C}_{0}\left( \sum_{i=1}^{n}\left[ \int\limits_{\Omega
}\left\vert u\right\vert ^{\alpha }\left\vert D_{i}^{2}u\right\vert ^{\beta
}dx\right] \right) ^{\frac{1}{\alpha +\beta }} \\
& =\tilde{C}_{0}\left[ u\right] _{\mathring{S}_{2,\alpha ,\beta }}. 
\tag{3.8}
\end{align*}%
Therefore for all $u\in \mathring{S}_{2,\alpha ,\beta }\left( \Omega \right) 
$ and $i=1..n,$ 
\begin{align*}
\int\limits_{\Omega }\left\vert D_{i}u\right\vert ^{p}dx&
=\int\limits_{\Omega }\left( D_{i}u\left\vert D_{i}u\right\vert
^{p-2}\right) D_{i}udx \\
& =\left( p-1\right) \int\limits_{\Omega }uD_{i}^{2}u\left\vert
D_{i}u\right\vert ^{p-2}dx \\
& \leq \left( p-1\right) \int\limits_{\Omega }\left\vert u\right\vert ^{%
\frac{\beta -\alpha }{\beta }}\left\vert u\right\vert ^{\frac{\alpha }{\beta 
}}\left\vert D_{i}^{2}u\right\vert \left\vert D_{i}u\right\vert ^{p-2}dx. 
\tag{3.9}
\end{align*}%
Employing H\"{o}lder's inequality in (3.9) with exponents $\left( \frac{%
p\beta }{2\beta -p},\beta ,\frac{p}{p-2}\right) $ we obtain 
\begin{align*}
\int\limits_{\Omega }\left\vert D_{i}u\right\vert ^{p}dx& \leq C\left(
\int\limits_{\Omega }\left\vert u\right\vert ^{\frac{p(\beta -\alpha )}{%
2\beta -p}}dx\right) ^{\frac{2\beta -p}{p\beta }}\left( \int\limits_{\Omega
}\left\vert u\right\vert ^{\alpha }\left\vert D_{i}^{2}u\right\vert ^{\beta
}dx\right) ^{\frac{1}{\beta }}\left( \int\limits_{\Omega }\left\vert
D_{i}u\right\vert ^{p}dx\right) ^{\frac{p-2}{p}} \\
& =C\left\Vert u\right\Vert _{\frac{p(\beta -\alpha )}{2\beta -p}}^{\frac{%
\beta -\alpha }{\beta }}\left[ u\right] _{\mathring{S}_{2,\alpha ,\beta }}^{%
\frac{\alpha +\beta }{\beta }}\left\Vert D_{i}u\right\Vert _{p}^{p-2} 
\tag{3.10}
\end{align*}%
Estimating (3.10) by using (3.8) we get, 
\begin{align*}
\int\limits_{\Omega }\left\vert D_{i}u\right\vert ^{p}dx& \leq \tilde{C}%
\left[ u\right] _{\mathring{S}_{2,\alpha ,\beta }}^{\frac{\beta -\alpha }{%
\beta }}\left[ u\right] _{\mathring{S}_{2,\alpha ,\beta }}^{\frac{\alpha
+\beta }{\beta }}\left\Vert D_{i}u\right\Vert _{p}^{p-2} \\
& =\tilde{C}\left[ u\right] _{\mathring{S}_{2,\alpha ,\beta }}^{2}\left\Vert
D_{i}u\right\Vert _{p}^{p-2}.  \tag{3.11}
\end{align*}%
By using Young's inequality in (3.11), we arrive at 
\begin{equation*}
\left\Vert D_{i}u\right\Vert _{p}^{p}\leq \tilde{C}\left( \varepsilon
\right) \left[ u\right] _{\mathring{S}_{2,\alpha ,\beta }}^{p}+\tilde{C}%
\varepsilon \left\Vert D_{i}u\right\Vert _{p}^{p},
\end{equation*}%
choosing $\varepsilon $ such that $\tilde{C}\varepsilon <1$ then we acquire 
\begin{equation*}
\left\Vert D_{i}u\right\Vert _{p}\leq \tilde{C}\left[ u\right] _{\mathring{S}%
_{2,\alpha ,\beta }}<\infty
\end{equation*}%
which completes the proof for the case (i).

Assume that (ii) holds i.e. $\alpha +\beta>n $ and $2<p\leq 2\beta.$ Then 
\begin{equation*}
W^{1,\alpha +\beta }\left( \Omega \right) \subset C\left( \bar{\Omega}\right)
\tag{3.12}
\end{equation*}%
By (3.6) and (3.8), we achieve 
\begin{equation}
\left\Vert u\right\Vert _{C\left( \bar{\Omega}\right) }\leq \tilde{C}\left[ u%
\right] _{\mathring{S}_{2,\alpha ,\beta }}.  \tag{3.13}
\end{equation}
For all $u\in\mathring{S}_{2,\alpha ,\beta }\left( \Omega \right)$ from
(3.9) one concludes, 
\begin{align*}
\left\Vert D_{i}u\right\Vert _{p}^{p}&\leq \left( p-1\right)
\int\limits_{\Omega }\left\vert u\right\vert ^{\frac{\beta -\alpha }{\beta }%
}\left\vert u\right\vert ^{\frac{\alpha }{\beta }}\left\vert
D_{i}^{2}u\right\vert \left\vert D_{i}u\right\vert ^{p-2}dx \\
&\leq (p-1)C(\varepsilon )\int\limits_{\Omega }\left\vert u\right\vert
^{\beta -\alpha }\left\vert u\right\vert ^{\alpha }\left\vert
D_{i}^{2}u\right\vert ^{\beta }dx+(p-1)\varepsilon \int\limits_{\Omega
}\left\vert D_{i}u\right\vert ^{\frac{\beta (p-2)}{\beta -1}}dx \\
&\leq (p-1)C\left( \varepsilon \right) \left\Vert u\right\Vert _{C\left( 
\bar{\Omega}\right) }^{\beta -\alpha }\int\limits_{\Omega }\left\vert
u\right\vert ^{\alpha }\left\vert D_{i}^{2}u\right\vert ^{\beta
}dx+(p-1)\varepsilon \left\Vert D_{i}u\right\Vert _{\frac{\beta (p-2)}{\beta
-1}}^{\frac{\beta (p-2)}{\beta-1}}
\end{align*}
By using (3.13) and $\frac{\beta (p-2)}{\beta -1}-p=\frac{p-2\beta }{\beta -1%
}$ with $p\leq 2\beta $ to estimate $\left\Vert u\right\Vert _{C\left( \bar{%
\Omega}\right) }^{\beta -\alpha }$ and $\left\Vert D_{i}u\right\Vert _{\frac{%
\beta (p-2)}{\beta -1}}^{\frac{\beta (p-2)}{\beta -1}}$ respectively, we
arrive at 
\begin{align*}
\left\Vert D_{i}u\right\Vert _{p}^{p}&\leq C\left( \varepsilon \right) (p-1) 
\left[ u\right] _{\mathring{S}_{2,\alpha ,\beta }}^{\beta -\alpha }\left[ u%
\right] _{\mathring{S}_{2,\alpha ,\beta }}^{\alpha +\beta }+(p-1)\varepsilon 
\tilde{C}C\left\Vert D_{i}u\right\Vert _{p}^{p}+(p-1)\varepsilon C_{1} \\
&=C\left( \varepsilon \right) \left[ u\right] _{\mathring{S}_{2,\alpha
,\beta }}^{2\beta }+\varepsilon \tilde{C}C\left\Vert D_{i}u\right\Vert
_{p}^{p}+\varepsilon C_{1}
\end{align*}
which implies 
\begin{equation*}
\left\Vert D_{i}u\right\Vert _{p}^{p}\leq \tilde{C}\left[ u\right] _{%
\mathring{S}_{2,\alpha ,\beta }}^{2\beta }+C_{1}
\end{equation*}
that ends the proof.

For the last case (iii), let\ $\alpha +\beta <n$ and $1\leq p\leq \frac{
2n\beta (\alpha +\beta )}{2n\beta -(\alpha +\beta )(\beta -\alpha )}.$ From
Sobolev imbedding theorems 
\begin{equation*}
W^{1,\alpha +\beta }\left( \Omega \right) \subset L^{\tilde{q}}\left( \Omega
\right) \text{ }\forall \tilde{q},\text{ }\tilde{q}\in \left[ 1,\frac{%
n\left( \alpha +\beta \right) }{n-\left( \alpha +\beta \right) }\right] 
\tag{3.14}
\end{equation*}%
By (3.6) and (3.14), we attain 
\begin{equation}
\left\Vert u\right\Vert _{\tilde{q}}\leq C\left[ u\right] _{\mathring{S}%
_{2,\alpha ,\beta }}  \tag{3.15}
\end{equation}%
For all $u\in \mathring{S}_{2,\alpha ,\beta }\left( \Omega \right),$ we
deduce from the inequality $p\leq \frac{2n\beta (\alpha +\beta )}{2n\beta
-(\alpha +\beta)(\beta -\alpha )}<2\beta $ that 
\begin{equation}
\left\Vert D_{i}u\right\Vert _{p}^{p}\leq C\left\Vert u\right\Vert _{\frac{%
p(\beta -\alpha )}{2\beta -p}}^{\frac{\beta -\alpha }{\beta }}\left[ u\right]
_{\mathring{S}_{2,\alpha ,\beta }}^{\frac{\alpha +\beta }{\beta }}\left\Vert
D_{i}u\right\Vert _{p}^{p-2}.  \tag{3.16}
\end{equation}%
If we take the inequality $\frac{p(\beta-\alpha )}{2\beta-p}\leq \frac{%
n\left( \alpha+\beta \right) }{n-\left( \alpha+\beta \right) }$ into account
and estimate $\left\Vert u\right\Vert _{\frac{p(\beta-\alpha )}{2\beta-p}}$
in (3.16) by (3.15) we obtain, 
\begin{align*}  \label{3.17}
\left\Vert D_{i}u\right\Vert _{p}^{p}& \leq \tilde{C}\left[ u\right] _{%
\mathring{S}_{2,\alpha ,\beta }}^{\frac{\beta -\alpha }{\beta }}\left[ u%
\right] _{\mathring{S}_{2,\alpha ,\beta }}^{\frac{\alpha +\beta }{\beta }%
}\left\Vert D_{i}u\right\Vert _{p}^{p-2} \\
& =\tilde{C}\left[ u\right] _{\mathring{S}_{2,\alpha ,\beta }}^{2}\left\Vert
D_{i}u\right\Vert _{p}^{p-2}  \tag{3.17}
\end{align*}%
Applying Young's inequality in \eqref{3.17} we attain, 
\begin{equation*}
\left\Vert D_{i}u\right\Vert _{p}^{p}\leq \tilde{C}\left( \varepsilon
\right) \left[ u\right] _{\mathring{S}_{2,\alpha ,\beta }}^{p}+\tilde{C}%
\varepsilon \left\Vert D_{i}u\right\Vert_{p}^{p}
\end{equation*}%
that yields 
\begin{equation*}
\left\Vert D_{i}u\right\Vert _{p}\leq \tilde{C}\left[ u\right] _{\mathring{S}%
_{2,\alpha ,\beta }}
\end{equation*}
so the proof is complete.
\end{proof}

\medskip We now turn our attention to some examples and results for one
dimensional case:

\begin{definition}
\label{birtan} \label{birtan}Let $\alpha >\beta -1\geq 0$ we define the
following function space: 
\begin{equation*}
\tilde{S}_{2,\alpha ,\beta }(a,b):=\{u\in \text{ }L^{1}(a,b)\mid \left[ u%
\right] _{\tilde{S}_{1,\alpha ,\beta }(a,b)}^{\alpha +\beta
}=\int\limits_{a}^{b}\left\vert u\right\vert ^{\alpha +\beta
}dx+\int\limits_{a}^{b}\left\vert u\right\vert ^{\alpha -\beta }\left\vert
Du\right\vert ^{2\beta }dx
\end{equation*}%
\begin{equation*}
+\int\limits_{a}^{b}\left\vert u\right\vert ^{\alpha }\left\vert
D^{2}u\right\vert ^{\beta }dx<\infty \}.
\end{equation*}
\end{definition}

The proofs of the following lemmas can be attained readily thus we skip the
proofs for the sake of brevity.

\begin{lemma}
\label{birgom} Let $\tilde{S}_{2,\alpha ,\beta }(a,b)$ be the space given in
Definition \ref{birtan}, then the imbedding 
\begin{equation*}
\tilde{S}_{2,\alpha ,\beta }(a,b)\subset S_{1,\alpha ,\beta }(a,b).
\end{equation*}
holds.
\end{lemma}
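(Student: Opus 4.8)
The plan is to show that the defining functional $[u]_{\tilde S_{1,\alpha,\beta}(a,b)}^{\alpha+\beta}$ of the space $\tilde S_{2,\alpha,\beta}(a,b)$ dominates (up to constants) the corresponding functional for $S_{1,\alpha,\beta}(a,b)$, namely $\int_a^b|u|^{\alpha+\beta}\,dx+\int_a^b|u|^{\alpha}|Du|^{\beta}\,dx$. Since $\tilde S_{2,\alpha,\beta}(a,b)$ already carries the term $\int_a^b|u|^{\alpha+\beta}\,dx$ explicitly, the only thing that must be controlled is the first-order term $\int_a^b|u|^{\alpha}|Du|^{\beta}\,dx$. So the heart of the matter is the pointwise/integral inequality
\begin{equation*}
\int\limits_a^b|u|^{\alpha}|Du|^{\beta}\,dx\leq C\left(\int\limits_a^b|u|^{\alpha+\beta}\,dx+\int\limits_a^b|u|^{\alpha-\beta}|Du|^{2\beta}\,dx+\int\limits_a^b|u|^{\alpha}|D^2u|^{\beta}\,dx\right).
\end{equation*}

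First I would observe that $|u|^{\alpha}|Du|^{\beta}=\bigl(|u|^{\alpha-\beta}|Du|^{2\beta}\bigr)^{1/2}\bigl(|u|^{\alpha+\beta}\bigr)^{1/2}$, so by the elementary Cauchy/Young inequality $ab\le\tfrac12 a^2+\tfrac12 b^2$ we immediately get
\begin{equation*}
\int\limits_a^b|u|^{\alpha}|Du|^{\beta}\,dx\leq\tfrac12\int\limits_a^b|u|^{\alpha-\beta}|Du|^{2\beta}\,dx+\tfrac12\int\limits_a^b|u|^{\alpha+\beta}\,dx,
\end{equation*}
which already bounds the missing term by two of the three terms present in $[u]_{\tilde S_{1,\alpha,\beta}}^{\alpha+\beta}$ — and the condition $\alpha>\beta-1\ge 0$ guarantees $\alpha-\beta>-1$, so the exponent $\alpha-\beta$ in the mixed term is admissible (the integrand is locally integrable for the functions in question). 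Strictly speaking one should justify this first on smooth functions dense in the space and then pass to the limit in the metric/pseudo-norm, but this is routine given the completeness statements recalled in Section 2; alternatively the third term $\int_a^b|u|^\alpha|D^2u|^\beta\,dx$ together with Lemma \ref{sbt3} (applied with suitable $\beta_0,\beta_1$ summing to $2\beta$, e.g. $\beta_0=\beta_1=\beta$, noting $\beta\ge 1$ so $\beta_0+\beta_1=2\beta\ge 2$) gives the same control with boundary terms that vanish or are absorbed.

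With the inequality in hand the inclusion is immediate: for $u\in\tilde S_{2,\alpha,\beta}(a,b)$ all three integrals on the right-hand side are finite by definition, hence $\int_a^b|u|^{\alpha+\beta}\,dx+\int_a^b|u|^\alpha|Du|^\beta\,dx<\infty$, and since $u\in L^1(a,b)$ this is exactly the requirement $u\in S_{1,\alpha,\beta}(a,b)$; moreover $[u]_{S_{1,\alpha,\beta}}^{\alpha+\beta}\le C[u]_{\tilde S_{1,\alpha,\beta}}^{\alpha+\beta}$, giving continuity of the embedding. I do not expect a genuine obstacle here — the only mildly delicate point is the density/approximation argument needed to legitimately apply the pointwise inequality (and, if one uses Lemma \ref{sbt3}, to check that the boundary contributions are harmless), but this is exactly the type of step the paper signals can be "attained readily," so it can be dispatched briefly by invoking the completeness of $S_{1,\alpha,\beta}(a,b)$ and approximation by $C^2(\overline{(a,b)})$ functions.
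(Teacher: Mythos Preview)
Your main argument is correct: the pointwise identity
\[
|u|^{\alpha}|Du|^{\beta}=\bigl(|u|^{\alpha-\beta}|Du|^{2\beta}\bigr)^{1/2}\bigl(|u|^{\alpha+\beta}\bigr)^{1/2}
\]
together with $ab\le\tfrac12 a^{2}+\tfrac12 b^{2}$ immediately yields
\[
\int_a^b|u|^{\alpha}|Du|^{\beta}\,dx\le\tfrac12\int_a^b|u|^{\alpha-\beta}|Du|^{2\beta}\,dx+\tfrac12\int_a^b|u|^{\alpha+\beta}\,dx,
\]
and both terms on the right are finite for $u\in\tilde S_{2,\alpha,\beta}(a,b)$ by definition. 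This gives the inclusion and the continuity estimate at once. The paper omits the proof entirely (``can be attained readily''), so there is nothing to compare against; your route is the natural one.

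Two minor comments. First, your worry about density/approximation is unnecessary here: the inequality $ab\le\tfrac12 a^{2}+\tfrac12 b^{2}$ holds pointwise for nonnegative measurable functions, so integrating it requires no smoothness of $u$ whatsoever --- you can drop that caveat. Second, your suggested alternative via Lemma~\ref{sbt3} with $\beta_{0}=\beta_{1}=\beta$ would, after shifting the exponent, need the hypothesis ``$\alpha\ge 0$'' of that lemma applied with $\alpha$ replaced by $\alpha-\beta$, i.e.\ $\alpha\ge\beta$; but Definition~\ref{birtan} only assumes $\alpha>\beta-1$, so that alternative does not cover all cases. Since your primary Cauchy/Young argument already settles the matter cleanly, simply omit the Lemma~\ref{sbt3} aside.
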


\begin{lemma}
\label{birhom}Let $\alpha >\beta -1>0$ and $g(t)\equiv \left\vert
t\right\vert ^{\frac{\alpha }{\beta }}t$ for any $t\in R^{1}$. Then
following assertions are true

\begin{itemize}
\item[1)] If $u\in $ $\tilde{S}_{2,\alpha ,\beta }(a,b)$ then $g\left(
u\right) \in W^{2,\beta }(a,b)$;

\item[2)] If a function $u\in L^{1}(a,b)$ such, that $g\left( u\right)
\equiv v\in W^{2,\beta }(a,b)$ then $u\in $ $\tilde{S}_{2,\alpha ,\beta
}(a,b)$.
\end{itemize}
\end{lemma}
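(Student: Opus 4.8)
The plan is to exploit the substitution $v=g(u)=|u|^{\alpha/\beta}u$ and carefully track how derivatives of $v$ relate to the three integral terms defining $\tilde S_{2,\alpha,\beta}(a,b)$. Since $\alpha>\beta-1>0$, the exponent $\frac{\alpha}{\beta}+1>1$, so $g$ is $C^1$ on $\mathbb R$ (in fact $C^1$ with derivative vanishing at $0$), and $g$ is a strictly increasing bijection of $\mathbb R$ onto $\mathbb R$ with continuous inverse $g^{-1}(s)=|s|^{\frac{\beta}{\alpha+\beta}-1}s$ up to a constant. First I would compute, for smooth $u$, the formal identities
\begin{equation*}
Dv=\Bigl(\tfrac{\alpha}{\beta}+1\Bigr)|u|^{\frac{\alpha}{\beta}}Du,\qquad
D^2v=\Bigl(\tfrac{\alpha}{\beta}+1\Bigr)\Bigl[\tfrac{\alpha}{\beta}|u|^{\frac{\alpha}{\beta}-1}(\operatorname{sgn}u)(Du)^2+|u|^{\frac{\alpha}{\beta}}D^2u\Bigr],
\end{equation*}
so that
\begin{equation*}
|Dv|^{\beta}=c_1\,|u|^{\alpha}|Du|^{\beta},\qquad
|D^2v|^{\beta}\le c_2\bigl(|u|^{\alpha-\beta}|Du|^{2\beta}+|u|^{\alpha}|D^2u|^{\beta}\bigr),
\end{equation*}
and $|v|^{\beta}=|u|^{\beta(\frac{\alpha}{\beta}+1)}=|u|^{\alpha+\beta}$. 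These three facts show that the $W^{2,\beta}$-norm of $v$ is controlled by $[u]_{\tilde S_{2,\alpha,\beta}}^{\alpha+\beta}$ (and conversely, using $|u|^{\alpha-\beta}|Du|^{2\beta}\le c_3(|D^2v|^{\beta}+|u|^{\alpha}|D^2u|^{\beta})$ from the first identity rearranged, or directly from $|Dv|^{\beta}\sim|u|^\alpha|Du|^\beta$ together with an interpolation via Lemma \ref{sbt3}), giving the equivalence needed for part 2).

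For part 1), given $u\in\tilde S_{2,\alpha,\beta}(a,b)$, I would first approximate $u$ by smooth functions $u_k$ in the natural topology of the space (this is where the completeness/metric structure recorded in the Proposition's footnote and the density arguments underlying the constant-exponent theory of \cite{S4} are invoked), apply the above pointwise identities to each $u_k$, and pass to the limit: the estimates show $(v_k)=(g(u_k))$ is Cauchy in $W^{2,\beta}(a,b)$, hence converges to some $w\in W^{2,\beta}$, while $v_k\to g(u)=v$ in $L^1$ by continuity of $g$ and the embedding $\tilde S_{2,\alpha,\beta}\subset S_{1,\alpha,\beta}\subset L^{\alpha+\beta}$ from Lemma \ref{birgom}; uniqueness of limits gives $v=w\in W^{2,\beta}(a,b)$. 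For part 2), starting from $v=g(u)\in W^{2,\beta}(a,b)$, I would write $u=g^{-1}(v)=|v|^{\frac{\beta}{\alpha+\beta}-1}v$ (a $C^1$ function of $v$ since $\frac{\beta}{\alpha+\beta}<1$ but $u$ is recovered with enough regularity because $\alpha>\beta-1$ keeps the relevant negative powers integrable against the available bounds), and run the reverse estimates: $\int|u|^{\alpha+\beta}=\int|v|^{\beta}<\infty$, $\int|u|^{\alpha}|Du|^{\beta}=c_1^{-1}\int|Dv|^{\beta}<\infty$, and the mixed term $\int|u|^{\alpha-\beta}|Du|^{2\beta}$ is estimated by combining $\int|D^2v|^\beta<\infty$ with the already-bounded terms via the rearranged second identity.

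The main obstacle is the singular behaviour near the zero set of $u$: the chain-rule identities above are only formal where $u=0$, because $|u|^{\frac{\alpha}{\beta}-1}$ and $|v|^{\frac{\beta}{\alpha+\beta}-1}$ blow up there. The condition $\alpha>\beta-1$ is exactly what makes $\frac{\alpha}{\beta}-1>-\frac1\beta$, so that $\frac{\beta(\alpha-\beta)}{\beta}\cdot\dots$—more precisely, the product $|u|^{\alpha-\beta}|Du|^{2\beta}$ has the exponent $\alpha-\beta>-1$ on $|u|$, keeping it locally integrable and keeping $g^{-1}$ differentiable in the Sobolev sense. I would handle this rigorously either by the truncation/regularization $g_\varepsilon(t)=(|t|+\varepsilon)^{\alpha/\beta}t$ (or $(t^2+\varepsilon^2)^{\alpha/2\beta}t$) followed by $\varepsilon\to0$ with dominated convergence justified by the finite integrals in the hypothesis, or by citing the constant-exponent one-dimensional results of \cite{S4,S5} of which this is the $m=2$, single-interval instance. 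Everything else is a routine bookkeeping of Hölder's inequality and the identities above, which is why the authors omit it.
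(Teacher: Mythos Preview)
The paper provides no proof of this lemma (it says the proofs ``can be attained readily'' and omits them), so there is nothing to compare your argument against directly. Your plan for part 1) is the natural one and works: the identities $|v|^\beta=|u|^{\alpha+\beta}$, $|Dv|^\beta=c_1|u|^\alpha|Du|^\beta$ and $|D^2v|^\beta\le c_2\bigl(|u|^{\alpha-\beta}|Du|^{2\beta}+|u|^\alpha|D^2u|^\beta\bigr)$ bound $\|v\|_{W^{2,\beta}}^\beta$ by $[u]_{\tilde S_{2,\alpha,\beta}}^{\alpha+\beta}$ (the first-order term being supplied by Lemma~\ref{birgom} or a direct Cauchy--Schwarz), and the regularisation $g_\varepsilon$ you propose disposes of the zero set of $u$.

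Part 2), however, has a genuine gap. You say $\int|u|^{\alpha-\beta}|Du|^{2\beta}$ is to be ``estimated by combining $\int|D^2v|^\beta$ with the already-bounded terms via the rearranged second identity,'' but rearranging gives
\[
c_1\,|u|^{\alpha/\beta-1}(\operatorname{sgn}u)(Du)^2=D^2v-c_2\,|u|^{\alpha/\beta}D^2u,
\]
so bounding the left side in $L^\beta$ requires $\int|u|^\alpha|D^2u|^\beta<\infty$, which is itself one of the quantities you are trying to establish: the argument is circular. Worse, no argument can close this as stated. Take $v(x)=x-c$ with $c\in(a,b)$; then $v\in W^{2,\beta}(a,b)$ trivially and $u=g^{-1}(v)=|x-c|^{\beta/(\alpha+\beta)}\operatorname{sgn}(x-c)\in L^1(a,b)$, but a direct calculation gives $|u|^{\alpha-\beta}|Du|^{2\beta}=C|x-c|^{-\beta}$, which is not integrable on $(a,b)$ since $\beta>1$. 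Hence $u\notin\tilde S_{2,\alpha,\beta}(a,b)$ under Definition~\ref{birtan}. The point is that the two second-order contributions to $D^2(g(u))$ cancel exactly at an interior simple zero of $v$, so assertion 2) fails as written; your plan cannot fill this gap without an additional hypothesis (for instance that $v$ has no interior zeros).
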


Consequently, we can define the space $\tilde{S}_{2,\alpha ,\beta }(a,b)$ in
the following way by virtue of the general definition of the nonlinear spaces

\begin{definition}
\label{birgentan}Let $g:\mathbb{R}\rightarrow \mathbb{R},$ $g(t)=\left\vert
t\right\vert ^{\frac{\alpha }{\beta }}t$ and $\alpha >\beta -1>0$ then $%
\tilde{S}_{2,\alpha ,\beta }(a,b)$ has the following representation 
\begin{equation*}
\tilde{S}_{2,\alpha ,\beta }(a,b)=\left\{ u\in L^{1}(a,b)\mid \left[ u\right]
_{S_{gW^{2,\beta }}}^{\alpha +\beta }\equiv \sum\limits_{0\leq s\leq
2}\left\Vert D^{s}g(u)\right\Vert _{\beta }^{\beta }<\infty \right\} \equiv
S_{gW^{2,\beta }}(a,b).
\end{equation*}
\end{definition}

\begin{remark}
\label{birremark}The following equivalences are true 
\begin{equation*}
\tilde{S}_{2,\alpha ,\beta }(a,b)\cap \left\{ u\left\vert \ u\left\vert \
_{\partial \Omega }=0\right. \right. \right\} \equiv \overset{0}{S}%
_{2,\alpha ,\beta }(a,b)
\end{equation*}%
and 
\begin{equation*}
\sum\limits_{0\leq s\leq k}\left\Vert D^{s}g(u)\right\Vert _{\beta }^{\beta
}\equiv \sum\limits_{0\leq s\leq k}\left\Vert g^{-1}\left( D^{s}g(u)\right)
\right\Vert _{\alpha +\beta }^{\alpha +\beta }
\end{equation*}%
for $k=0,1$, but for $k=2$ 
\begin{equation*}
\left\Vert g^{\prime }(u)D^{2}u\right\Vert _{\beta }^{\beta }\equiv
\left\Vert g^{-1}\left( g^{\prime }(u)D^{2}u)\right) \right\Vert _{\alpha
+\beta }^{\alpha +\beta }\quad \&
\end{equation*}%
\begin{equation*}
\left\Vert g^{\prime \prime }(u)\left( Du\right) ^{2}\right\Vert _{\beta
}^{\beta }\equiv \left\Vert g^{-1}\left( g^{\prime \prime }(u)\left(
Du\right) ^{2})\right) \right\Vert _{\alpha +\beta }^{\alpha +\beta }.
\end{equation*}
\end{remark}

The following example shows the nonlinear structure of the pn-spaces.

\begin{example}
\label{sbtornek2} Let $\beta >1.$ Then $S_{1,1,\beta }(0,1)$ is a nonlinear
space.
\end{example}

Let $\tau \in \left( \frac{\beta -1}{\beta +1},\frac{\beta -1}{\beta }\right]
$ and define the functions 
\begin{equation*}
u_{0}\left( x\right):= x^{\tau }\text{ and }u_{1}\left( x\right) :=\theta 
\text{, }x\in \left( 0,1\right) \text{, }\left( \theta \in \mathbb{R}^{+}%
\text{is a constant.}\right)
\end{equation*}
It is easy to show that $u_{0},$ $u_{1}\in S_{1,1,\beta }(0,1)$ by the
definition of $S_{1,1,\beta }(0,1).$ Besides $u\left( x\right):=u_{0}\left(
x\right)+u_{1}\left( x\right)=x^{\tau}+\theta\not\in S_{1,1,\beta }(0,1).$ 
\begin{align*}
\left[ u\right]_{S_{1,1,\beta
}(0,1)}^{\beta+1}&=\int\limits_{0}^{1}\left\vert u\right\vert ^{\beta
+1}dx+\int\limits_{0}^{1}\left\vert u\right\vert \left\vert Du\right\vert
^{\beta }dx \\
&=\int\limits_{0}^{1}\left( x^{\tau }+\theta \right) ^{\beta +1}dx+\tau
^{\beta }\int\limits_{0}^{1}\left( x^{\tau }+\theta \right) x^{\beta (\tau
-1)}dx \\
&=\int\limits_{0}^{1}\left( x^{\tau }+\theta \right) ^{\beta +1}dx+\tau
^{\beta }\int\limits_{0}^{1}\left( x^{\tau \left( \beta +1\right) -\beta
}+\theta x^{\beta (\tau -1)}\right)dx.
\end{align*}
Since $\beta(\tau-1)\leq-1$ so, the right and side of the above equation is
divergent which implies $u\not\in S_{1,1,\beta }(0,1).$

\section{Variable Exponent Nonlinear Spaces and Embedding Theorems}

In this section, we present certain new results with detailed proofs for
variable exponent pn-spaces mentioned in Section 2. First, we derive
integral inequalities (see, also \cite{S-S-2}) to understand the structure
of these spaces. Afterwards, we prove some lemmas and theorems on continuous
embeddings of these spaces and on topology of them. (Throughout this
section, we assume that $\Omega \subset \mathbb{R} ^{n}\left( n\geq 2\right) 
$ is a bounded domain with Lipschitz boundary.)

\begin{lemma}
\label{var1} Let $\alpha,$ $\beta$ $\in M_{0}(\Omega)$ and $\alpha \left(
x\right) \geq \beta \left( x\right) $ a.e. $x\in \Omega .$ Then the
inequality 
\begin{equation}
\int\limits_{\Omega }\left\vert u\right\vert ^{\beta \left( x\right) }dx\leq
\int\limits_{\Omega }\left\vert u\right\vert ^{\alpha \left( x\right)
}dx+\left\vert \Omega \right\vert ,\text{ \ \ }\forall u\in L^{\alpha \left(
x\right) }\left( \Omega \right)  \tag{4.1}
\end{equation}%
holds.
\end{lemma}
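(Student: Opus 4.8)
The plan is to reduce the integral inequality to an elementary pointwise comparison, exploiting the fact that for a fixed real number $s\geq 0$ the map $q\mapsto s^{q}$ is monotone increasing when $s\geq 1$ and monotone decreasing when $0\leq s\leq 1$. First I would split the domain into the two measurable sets
\begin{equation*}
\Omega _{1}:=\left\{ x\in \Omega :\ \left\vert u(x)\right\vert \leq 1\right\}
,\qquad \Omega _{2}:=\left\{ x\in \Omega :\ \left\vert u(x)\right\vert
>1\right\} ,
\end{equation*}
which are measurable since $u$ is measurable, and $\Omega =\Omega _{1}\cup
\Omega _{2}$ is a disjoint union.

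On $\Omega _{1}$ I would use that $\beta (x)\geq 1>0$ a.e., so that $
\left\vert u(x)\right\vert ^{\beta (x)}\leq 1$ for a.e. $x\in \Omega _{1}$;
hence
\begin{equation*}
\int\limits_{\Omega _{1}}\left\vert u\right\vert ^{\beta (x)}dx\leq
\left\vert \Omega _{1}\right\vert \leq \left\vert \Omega \right\vert .
\end{equation*}
On $\Omega _{2}$ I would use that $\left\vert u(x)\right\vert >1$ together
with the hypothesis $\beta (x)\leq \alpha (x)$ a.e., which gives $\left\vert
u(x)\right\vert ^{\beta (x)}\leq \left\vert u(x)\right\vert ^{\alpha (x)}$
for a.e. $x\in \Omega _{2}$; hence
\begin{equation*}
\int\limits_{\Omega _{2}}\left\vert u\right\vert ^{\beta (x)}dx\leq
\int\limits_{\Omega _{2}}\left\vert u\right\vert ^{\alpha (x)}dx\leq
\int\limits_{\Omega }\left\vert u\right\vert ^{\alpha (x)}dx,
\end{equation*}
where the last step just drops the restriction to $\Omega _{2}$ since the
integrand is nonnegative. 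Adding the two estimates over $\Omega _{1}$ and $
\Omega _{2}$ yields \eqref{4.1}. Finiteness of the right-hand side is not
needed for the inequality itself, but is guaranteed by the assumption $u\in
L^{\alpha (x)}(\Omega )$ (equivalently $\sigma _{\alpha }(u)<\infty $ since $
\alpha \in M_{0}(\Omega )$), so both sides are finite.

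There is essentially no serious obstacle here; the only points requiring a
little care are the measurability of $\Omega _{1}$ and $\Omega _{2}$, and the
observation that the monotonicity direction of $q\mapsto s^{q}$ flips at $s=1$,
which is exactly why the decomposition into $\{\left\vert u\right\vert \leq
1\}$ and $\{\left\vert u\right\vert >1\}$ is the natural one and why the extra
additive term $\left\vert \Omega \right\vert$ appears.
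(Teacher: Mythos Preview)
Your argument is correct and complete. It differs from the paper's proof in the choice of decomposition and in the key inequality used. The paper splits $\Omega$ according to the exponents, setting $\Omega_{1}=\{x:\alpha(x)=\beta(x)\}$ and $\Omega_{2}=\{x:\alpha(x)>\beta(x)\}$, and then on $\Omega_{2}$ applies Young's inequality pointwise with exponent $\alpha(x)/\beta(x)$ to obtain
\[
|u|^{\beta(x)}\leq \tfrac{\beta(x)}{\alpha(x)}|u|^{\alpha(x)}+\tfrac{\alpha(x)-\beta(x)}{\alpha(x)},
\]
after which the coefficients are bounded by $1$. You instead split according to the size of $|u|$ and use only the monotonicity of $q\mapsto s^{q}$ on each piece. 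Your route is more elementary (no Young's inequality needed) and arguably makes the origin of the additive $|\Omega|$ term more transparent; the paper's route has the mild advantage that it isolates exactly where the strict inequality $\alpha>\beta$ is used, which can be convenient when one later wants sharper constants depending on $\alpha-\beta$. Either way the result is immediate.
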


\begin{proof}
Let $\Omega _{1}:=\left\{ x\in \Omega : \alpha \left( x\right)=\beta \left(
x\right)\right\} $ and $\Omega _{2}:=\Omega \setminus \Omega _{1}.$ Hence%
\begin{equation*}
\int\limits_{\Omega }\left\vert u\right\vert ^{\beta \left( x\right)
}dx=\int\limits_{\Omega _{1}}\left\vert u\right\vert ^{\alpha \left(
x\right) }dx+\int\limits_{\Omega _{2}}\left\vert u\right\vert ^{\beta \left(
x\right) }dx.
\end{equation*}%
Estimating the second integral on the right member of the above equation by
utilizing Young inequality ($\alpha \left( x\right) >\beta \left( x\right) $
on $\Omega _{2},$), we achieve that 
\begin{equation*}
\int\limits_{\Omega }\left\vert u\right\vert ^{\beta \left( x\right) }dx\leq
\int\limits_{\Omega _{1}}\left\vert u\right\vert ^{\alpha \left( x\right)
}dx+\int\limits_{\Omega _{2}}\left( \frac{\beta \left( x\right) }{\alpha
\left( x\right) }\right) \left\vert u\right\vert ^{\alpha \left( x\right)
}dx+\int\limits_{\Omega _{2}}\left( \frac{\alpha \left( x\right) -\beta
\left( x\right) }{\alpha \left( x\right) }\right) dx,
\end{equation*}%
since $\frac{\beta \left( x\right) }{\alpha \left( x\right) }<1$ and $\frac{%
\alpha \left( x\right) -\beta \left( x\right) }{\alpha \left( x\right) }<1,$
for $x\in \Omega _{2}$ we deduce from the last inequality that 
\begin{equation*}
\int\limits_{\Omega }\left\vert u\right\vert ^{\beta \left( x\right) }dx\leq
\int\limits_{\Omega _{1}}\left\vert u\right\vert ^{\alpha \left( x\right)
}dx+\int\limits_{\Omega _{2}}\left\vert u\right\vert ^{\alpha \left(
x\right) }dx+\left\vert \Omega \right\vert
\end{equation*}%
\begin{equation*}
=\int\limits_{\Omega }\left\vert u\right\vert ^{\alpha \left( x\right)
}dx+\left\vert \Omega \right\vert.
\end{equation*}%
On the other side if $\alpha \left( x\right) =\beta \left( x\right) $ a.e. $%
x\in \Omega ,$ then (4.1) is clear.
\end{proof}

\begin{lemma}
\label{var2} Assume that $\zeta \in M_{0}(\Omega)$ and $\beta \geq 1,$ $%
\epsilon >0.$ Then for every $u\in L^{\zeta \left( x\right) +\epsilon
}\left( \Omega \right) $%
\begin{equation}
\int\limits_{\Omega }\left\vert u\right\vert ^{\zeta \left( x\right)
}\left\vert \ln \left\vert u\right\vert \right\vert ^{\beta }dx\leq
N_{1}\int\limits_{\Omega }\left\vert u\right\vert ^{\zeta \left( x\right)
+\epsilon }dx+N_{2}  \tag{4.2}
\end{equation}%
is satisfied. Here $N_{1}\equiv N_{1}\left( \epsilon ,\beta \right) >0$ and $%
N_{2}\equiv N_{2}\left( \epsilon ,\beta ,\left\vert \Omega \right\vert
\right) >0$ are constants.
\end{lemma}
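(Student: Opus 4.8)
\textbf{Proof proposal for Lemma \ref{var2}.} The plan is to split the domain according to the size of $|u|$ and then estimate $|u|^{\zeta(x)}|\ln|u||^\beta$ pointwise on each piece. First I would set
\[
\Omega_{-}:=\{x\in\Omega:\ |u(x)|\leq 1\},\qquad \Omega_{+}:=\{x\in\Omega:\ |u(x)|>1\},
\]
and treat the two contributions to the integral on the left of \eqref{4.2} separately. On $\Omega_{-}$ the key observation is that the function $t\mapsto t^{\zeta(x)}|\ln t|^{\beta}$ is bounded on $(0,1]$ by a constant depending only on $\zeta^{-}$ (or even just on $\beta$, since $t^{\zeta(x)}\le t^{\,1}$ there is not needed---boundedness of $s^{a}|\ln s|^{\beta}$ on $(0,1]$ holds for every $a>0$); hence
\[
\int_{\Omega_{-}}|u|^{\zeta(x)}\,\bigl|\ln|u|\bigr|^{\beta}\,dx\leq C(\beta,\zeta^{-})\,|\Omega|,
\]
which I will absorb into the constant $N_2$.

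Next, on $\Omega_{+}$ I would use the elementary inequality that for each fixed $\epsilon>0$ and $\beta\ge 1$ there is a constant $c=c(\epsilon,\beta)$ with
\[
(\ln s)^{\beta}\leq c(\epsilon,\beta)\,s^{\epsilon}\qquad\text{for all }s\geq 1,
\]
which follows because $s^{-\epsilon}(\ln s)^{\beta}\to 0$ as $s\to\infty$ and is continuous on $[1,\infty)$, hence bounded there. Applying this with $s=|u(x)|$ on $\Omega_{+}$ gives the pointwise bound $|u|^{\zeta(x)}\,|\ln|u||^{\beta}\leq c(\epsilon,\beta)\,|u|^{\zeta(x)+\epsilon}$, so that
\[
\int_{\Omega_{+}}|u|^{\zeta(x)}\,\bigl|\ln|u|\bigr|^{\beta}\,dx\leq c(\epsilon,\beta)\int_{\Omega_{+}}|u|^{\zeta(x)+\epsilon}\,dx\leq c(\epsilon,\beta)\int_{\Omega}|u|^{\zeta(x)+\epsilon}\,dx.
\]
Here I use that $\zeta\in M_0(\Omega)$ guarantees $\zeta(x)+\epsilon$ is a bounded exponent and $u\in L^{\zeta(x)+\epsilon}(\Omega)$ makes the right-hand integral finite. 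Combining the two estimates yields \eqref{4.2} with $N_1:=c(\epsilon,\beta)$ and $N_2:=C(\beta,\zeta^{-})\,|\Omega|$, both of the asserted form.

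The only mildly delicate point is the uniformity of the constants in the pointwise inequality $(\ln s)^{\beta}\le c(\epsilon,\beta)s^{\epsilon}$ with respect to $x$: since $\epsilon$ and $\beta$ are fixed numbers (not variable exponents) this constant genuinely does not depend on $x$, so no $x$-dependence creeps in beyond what is already recorded in the statement. I expect this step---verifying that one single constant works simultaneously for all $x\in\Omega$---to be the main thing to be careful about, but it is immediate once one notes that the exponent $\epsilon$ appearing in the Young-type bound is the fixed $\epsilon$ from the hypothesis rather than something like $\epsilon/\zeta(x)$. Everything else is the routine domain-splitting argument used in Lemma \ref{var1}.
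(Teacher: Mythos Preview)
Your proposal is correct and follows essentially the same strategy as the paper: split at $|u|=1$, use the elementary bound $(\ln s)^\beta\le c(\epsilon,\beta)\,s^{\epsilon}$ on $\{|u|>1\}$, and on $\{|u|\le 1\}$ exploit boundedness of $t^{a}|\ln t|^{\beta}$ on $(0,1]$. One small point to clean up: you record $N_2=C(\beta,\zeta^{-})\,|\Omega|$, which carries a $\zeta^{-}$-dependence not permitted by the statement, but your own parenthetical observation fixes this---since $\zeta^{-}\ge 1$ you may use $t^{\zeta(x)}\le t$ on $(0,1]$ and take the uniform bound $\sup_{t\in(0,1]}t\,|\ln t|^{\beta}$, which depends only on $\beta$.
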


\begin{proof}
Let us consider the function $f\left( t\right) =\left\vert t\right\vert
^{\epsilon }-\ln \left\vert t\right\vert $ for $t\in \mathbb{R}%
-\left\{0\right\} $. Since $f$ is an even function it is sufficient to
investigate only $f\left( t\right) =t^{\epsilon }-\ln t$, $t>0.$ It can be
readily shown that this function is decreasing on $\left( 0,\frac{1}{\sqrt[%
\epsilon ]{\epsilon }}\right] $ and increasing on the interval $\left[ \frac{%
1}{\sqrt[\epsilon ]{\epsilon }},\infty \right) .$ Also $f\nearrow \infty $
when $x\searrow 0$ and $x\nearrow \infty $ and $f\left( \frac{1}{\sqrt[%
\epsilon ]{\epsilon }}\right) =\frac{1}{\epsilon }\left( 1+\ln \epsilon
\right) $. Here we have two situations (i) if $\epsilon \in \left( \frac{1}{e%
},\infty \right) $ then $f\left( \frac{1}{\sqrt[\epsilon ]{\epsilon }}%
\right) >0$ (ii) if $\epsilon \in \left( 0,\frac{1}{e}\right] $ then $%
f\left( \frac{1}{\sqrt[\epsilon ]{\epsilon }}\right) \leq 0.$ For the first
case (i) $\forall t\in \left( 0,\infty \right) ,$ $f\left( t\right) >0$ or
equivalently $\ln t<t^{\epsilon }$. For the case (ii), the function $f$ has
two zeros say $m_{1}>0$ and $m_{2}>0$ and for $t\in \mathbb{R}^{+}-\left(
m_{1},m_{2}\right) $ it is obvious that $\ln t<t^{\epsilon }.$ For $t\in %
\left[ m_{1},m_{2}\right] ,$ $\exists N_{0}>1$ ($N_{0}\equiv N_{0}\left( 
\frac{1}{\sqrt[\epsilon ]{\epsilon }}\right) $) such that $\ln
t<N_{0}t^{\epsilon }.$ Hence the inequality $\ln t\leq N_{0}t^{\epsilon }$
will be satisfied on $\left( 0,\infty \right) .$ As a result from the cases
(i) and (ii) for arbitrary $\epsilon >0$ and $t\in \mathbb{R}-\left\{
0\right\} $, we have the inequality 
\begin{equation*}
\ln \left\vert t\right\vert \leq N_{0}\left( \epsilon \right) \left\vert
t\right\vert ^{\epsilon }
\end{equation*}%
that implies on the set $\left\{ x\in \Omega :\left\vert u\left( x\right)
\right\vert \geq 1\text{ }\right\} $ the inequality $\left\vert u\right\vert
^{\zeta \left( x\right) }\left\vert \ln \left\vert u\right\vert \right\vert
^{\beta }\leq \newline
\leq N_{0}\left( \epsilon ,\beta \right) \left\vert u\right\vert ^{\zeta
\left( x\right) +\epsilon }$ be fulfilled. Moreover, from $\underset{%
t\rightarrow 0^{+}}{\lim }t^{\epsilon }\left\vert \ln t\right\vert ^{\beta
}=0$ and for every fixed $x_{0}\in \Omega $, $\underset{t\rightarrow 0^{+}}{%
\lim }\frac{\left\vert t\right\vert ^{\zeta \left( x_{0}\right) }\left\vert
\ln \left\vert t\right\vert \right\vert ^{\beta }}{t^{\zeta \left(
x_{0}\right) +\epsilon }+1}=0$, we arrive at the inequality $\left\vert
u\right\vert ^{\zeta \left( x\right) -1}\left\vert u\right\vert \left\vert
\ln \left\vert u\right\vert \right\vert ^{\beta }$ $\leq \tilde{N}_{0}\left(
\left\vert u\right\vert ^{\zeta \left( x\right) +\epsilon }+1\right) $ on
the set $\left\{ x\in \Omega :\left\vert u\left( x\right) \right\vert <1%
\text{ }\right\} $ for some $\tilde{N}_{0}=\tilde{N}_{0}\left( \epsilon
,\beta \right) >0$. So the proof is complete by the combination of these
inequalities.
\end{proof}

\begin{lemma}
\label{var3} Let $\tilde{\varepsilon}>0$ and $\beta _{1}:\Omega \rightarrow
\lbrack \tilde{\varepsilon},$ $\infty )$ be a measurable function which
satisfy $\tilde{\varepsilon}\leq \beta _{1}^{-}\leq \beta _{1}\left(
x\right) \leq \beta _{1}^{+}<\infty $ and $\xi,$ $\beta $ $\in M_{0}(\Omega)$
then the inequality 
\begin{equation}  \label{4.3}
\int\limits_{\Omega }\left\vert u\right\vert ^{\xi \left( x\right)
}\left\vert \ln \left\vert u\right\vert \right\vert ^{\beta \left( x\right)
}dx\leq C_{1}\int\limits_{\Omega }\left\vert u\right\vert ^{\xi \left(
x\right) +\beta _{1}\left( x\right) }dx+C_{2},\ \forall u\in L^{\xi \left(
x\right) +\beta _{1}\left( x\right) }\left( \Omega \right)  \tag{4.3}
\end{equation}%
holds. Here $C_{1}\equiv C_{1}\left( \tilde{\varepsilon},\beta ^{+}\right)
>0 $ and $C_{2}\equiv C_{2}\left( \tilde{\varepsilon}, \beta ^{+},
\left\vert \Omega \right\vert \right) >0$ are constants.
\end{lemma}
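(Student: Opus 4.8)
The plan is to reduce Lemma~\ref{var3} to the already-established Lemma~\ref{var2} by a pointwise comparison of exponents. The key observation is that the variable exponent $\beta(x)$ on the logarithm is bounded above by the constant $\beta^{+}$, so on the set where $\lvert\ln\lvert u\rvert\rvert\ge 1$ we have $\lvert\ln\lvert u\rvert\rvert^{\beta(x)}\le\lvert\ln\lvert u\rvert\rvert^{\beta^{+}}$, while on the set where $\lvert\ln\lvert u\rvert\rvert<1$ we have $\lvert\ln\lvert u\rvert\rvert^{\beta(x)}\le 1$ directly. Thus, splitting $\Omega$ into $\Omega'=\{x:\lvert\ln\lvert u(x)\rvert\rvert\ge 1\}$ and $\Omega''=\Omega\setminus\Omega'$, I would bound
\begin{equation*}
\int\limits_{\Omega}\lvert u\rvert^{\xi(x)}\lvert\ln\lvert u\rvert\rvert^{\beta(x)}\,dx\le\int\limits_{\Omega'}\lvert u\rvert^{\xi(x)}\lvert\ln\lvert u\rvert\rvert^{\beta^{+}}\,dx+\int\limits_{\Omega''}\lvert u\rvert^{\xi(x)}\,dx.
\end{equation*}

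Next I would handle the two resulting integrals separately. For the first, I apply Lemma~\ref{var2} with the fixed constant exponent $\beta^{+}$ in place of $\beta$ and with $\epsilon$ chosen to be $\tilde\varepsilon$ (or, to make the target exponent match exactly, I observe that $\beta_1(x)\ge\tilde\varepsilon$ and use Lemma~\ref{var1}-type monotonicity of Lebesgue norms on a finite-measure domain to pass from the power $\xi(x)+\tilde\varepsilon$ up to $\xi(x)+\beta_1(x)$, absorbing the difference into an additive constant proportional to $\lvert\Omega\rvert$). This yields $\int_{\Omega'}\lvert u\rvert^{\xi(x)}\lvert\ln\lvert u\rvert\rvert^{\beta^{+}}\,dx\le N_1\int_{\Omega}\lvert u\rvert^{\xi(x)+\tilde\varepsilon}\,dx+N_2$ with $N_1=N_1(\tilde\varepsilon,\beta^{+})$ and $N_2=N_2(\tilde\varepsilon,\beta^{+},\lvert\Omega\rvert)$, exactly the dependence claimed. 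For the second integral, since $\xi(x)\le\xi(x)+\beta_1(x)$ a.e.\ and $\lvert\Omega\rvert<\infty$, Lemma~\ref{var1} (applied with $\alpha(x)=\xi(x)+\beta_1(x)$, $\beta(x)=\xi(x)$) gives $\int_{\Omega''}\lvert u\rvert^{\xi(x)}\,dx\le\int_{\Omega}\lvert u\rvert^{\xi(x)+\beta_1(x)}\,dx+\lvert\Omega\rvert$. Combining the two estimates and relabeling constants produces \eqref{4.3}.

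The only genuinely delicate point is the bookkeeping of which exponent appears in the right-hand integral: Lemma~\ref{var2} as stated produces the exponent $\zeta(x)+\epsilon$ for a fixed scalar $\epsilon$, whereas \eqref{4.3} asks for $\xi(x)+\beta_1(x)$ with a variable $\beta_1$. I would resolve this by taking $\epsilon=\tilde\varepsilon$ in Lemma~\ref{var2} and then invoking the continuous embedding $L^{\xi(x)+\beta_1(x)}(\Omega)\hookrightarrow L^{\xi(x)+\tilde\varepsilon}(\Omega)$ (valid on finite-measure domains precisely because $\xi(x)+\tilde\varepsilon\le\xi(x)+\beta_1(x)$), or equivalently the integral inequality of Lemma~\ref{var1}, to replace $\int_{\Omega}\lvert u\rvert^{\xi(x)+\tilde\varepsilon}\,dx$ by $\int_{\Omega}\lvert u\rvert^{\xi(x)+\beta_1(x)}\,dx+\lvert\Omega\rvert$. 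This is also the step where the hypothesis $\beta_1(x)\ge\tilde\varepsilon$ (rather than merely $\beta_1(x)>0$) is used in an essential way — it is what makes the constants depend only on $\tilde\varepsilon$ and $\beta^{+}$ uniformly in $u$ and guarantees that the exponent gap $\beta_1(x)-\tilde\varepsilon\ge 0$ has a fixed sign so Young's inequality in the background of Lemma~\ref{var2} closes.
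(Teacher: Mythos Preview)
Your proof is correct and follows essentially the same route as the paper's: reduce the variable logarithmic exponent $\beta(x)$ to the constant $\beta^{+}$, apply Lemma~\ref{var2} with a fixed $\epsilon\le\tilde\varepsilon$, and then use Lemma~\ref{var1} to upgrade the exponent $\xi(x)+\epsilon$ to $\xi(x)+\beta_{1}(x)$. The only cosmetic difference is in the first step: the paper obtains the pointwise bound $\lvert\ln\lvert u\rvert\rvert^{\beta(x)}\le\lvert\ln\lvert u\rvert\rvert^{\beta^{+}+\gamma}+1$ via Young's inequality (with an auxiliary $\gamma\in(0,1)$), whereas you achieve the equivalent estimate by explicitly splitting $\Omega$ according to whether $\lvert\ln\lvert u\rvert\rvert\ge 1$ --- your version is in fact slightly cleaner since no $\gamma$ is needed.
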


\begin{proof}
For arbitrary $\gamma \in \left( 0,1\right) ,$ $\frac{\beta ^{+}+\gamma }{%
\beta \left( x\right) }>1$ by utilizing the Young's inequality with this
exponent to $\left\vert \ln \left\vert u\right\vert \right\vert ^{\beta
\left( x\right) }$ we achieve the following inequality,%
\begin{equation*}
\left\vert \ln \left\vert u\right\vert \right\vert ^{\beta \left( x\right)
}\leq \left\vert \ln \left\vert u\right\vert \right\vert ^{\beta ^{+}+\gamma
}+1,
\end{equation*}%
by multiplying each side of this inequality with $\left\vert u\right\vert
^{\xi \left( x\right) },$ we get 
\begin{equation*}
\left\vert u\right\vert ^{\xi \left( x\right) }\left\vert \ln \left\vert
u\right\vert \right\vert ^{\beta \left( x\right) }\leq \left\vert
u\right\vert ^{\xi \left( x\right) }\left\vert \ln \left\vert u\right\vert
\right\vert ^{\beta ^{+}+\gamma }+\left\vert u\right\vert ^{\xi \left(
x\right) },\text{ \ }x\in \Omega .
\end{equation*}%
Thus integrating both sides over $\Omega $,%
\begin{equation*}
\int\limits_{\Omega }\left\vert u\right\vert ^{\xi \left( x\right)
}\left\vert \ln \left\vert u\right\vert \right\vert ^{\beta \left( x\right)
}dx\leq \int\limits_{\Omega }\left\vert u\right\vert ^{\xi \left( x\right)
}\left\vert \ln \left\vert u\right\vert \right\vert ^{\beta ^{+}+\gamma
}dx+\int\limits_{\Omega }\left\vert u\right\vert ^{\xi \left( x\right) }dx
\end{equation*}%
is established. For $\epsilon <\tilde{\varepsilon},$ estimating the first
integral on the right side of the last inequality by Lemma \ref{var2}, we
acquire, 
\begin{equation*}
\int\limits_{\Omega }\left\vert u\right\vert ^{\xi \left( x\right)
}\left\vert \ln \left\vert u\right\vert \right\vert ^{\beta \left( x\right)
}dx\leq C_{3}\int\limits_{\Omega }\left\vert u\right\vert ^{\xi \left(
x\right) +\epsilon }dx+C_{4}+\int\limits_{\Omega }\left\vert u\right\vert
^{\xi \left( x\right) }dx.
\end{equation*}%
As $\frac{\xi \left( x\right) +\epsilon }{\xi \left( x\right) }>1,$ applying
Lemma \ref{var1} to estimate the second integral on the right member of the
last inequality, we gain 
\begin{equation*}
\int\limits_{\Omega }\left\vert u\right\vert ^{\xi \left( x\right)
}\left\vert \ln \left\vert u\right\vert \right\vert ^{\beta \left( x\right)
}dx\leq C_{1}\int\limits_{\Omega }\left\vert u\right\vert ^{\xi \left(
x\right) +\epsilon }dx+C_{2,}
\end{equation*}%
here $C_{1}\equiv C_{1}\left( \epsilon ,\beta^{+}\right) >0$ and $%
C_{2}\equiv C_{2}\left( \epsilon ,\beta^{+},\left\vert \Omega
\right\vert\right)>0$ are constants.\newline
Since $\xi \left( x\right) +\epsilon <\xi \left( x\right) +\beta _{1}\left(
x\right) ,$ a.e. $x\in \Omega ,$ estimating the integral on the right side
of the above equation by using Lemma \ref{var1}, we attain \eqref{4.3}.
\end{proof}

\medskip

In the following discussions, we examine elaborate properties of the
pn-spaces $S_{1,\gamma \left( x\right) ,\beta \left( x\right) ,\theta \left(
x\right) }\left( \Omega \right) $ presented in Section 2. (for other
results, see \cite{S-S-1}, \cite{S-S-2}) .

\begin{lemma}
Let $S_{1,\gamma \left( x\right) ,\beta \left( x\right) ,\theta \left(
x\right) }\left( \Omega \right) $ and $S_{1,\xi \left( x\right) ,\alpha
\left( x\right) ,\theta _{1}\left( x\right) }\left( \Omega \right) $ be the
spaces given in Definition \ref{uz}. Assume that one of the conditions given
below are satisfied

\begin{enumerate}
\item \label{varimbed}

\item[(i)] $\theta _{1}\left( x\right) \leq \theta \left( x\right) ,$ $\beta
\left( x\right) \geq \alpha \left( x\right) $ and $\xi \left( x\right) \beta
\left( x\right) =\gamma \left( x\right) \alpha \left( x\right) ,$ a.e. $x\in
\Omega ,$

\item[(ii)] $\theta _{1}\left( x\right) \leq \theta \left( x\right) ,$ $\xi
\left( x\right) \beta \left( x\right) >\gamma \left( x\right) \alpha \left(
x\right) ,$ $\gamma \left( x\right) +\beta \left( x\right) \geq \xi \left(
x\right) +\alpha \left( x\right) $ and $\beta \left( x\right) \geq \alpha
\left( x\right) +\varepsilon $ for some $\varepsilon >0$
\end{enumerate}

Under these conditions the embedding%
\begin{equation}  \label{varembed}
S_{1,\gamma \left( x\right) ,\beta \left( x\right) ,\theta \left( x\right)
}\left( \Omega \right) \subset S_{1,\xi \left( x\right) ,\alpha \left(
x\right) ,\theta _{1}\left( x\right) }\left( \Omega \right) .  \tag{4.4}
\end{equation}
holds.
\end{lemma}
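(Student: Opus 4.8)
The plan is to take an arbitrary $u\in S_{1,\gamma(x),\beta(x),\theta(x)}(\Omega)$ and verify directly the two conditions defining $S_{1,\xi(x),\alpha(x),\theta_{1}(x)}(\Omega)$, namely that $u\in L^{\theta_{1}(x)}(\Omega)$ and that $\int_{\Omega}|u|^{\xi(x)+\alpha(x)}dx<\infty$ together with $\sum_{i=1}^{n}\int_{\Omega}|u|^{\xi(x)}|D_{i}u|^{\alpha(x)}dx<\infty$. The membership $u\in L^{\theta_{1}(x)}(\Omega)$ is immediate in both cases: $u\in L^{\theta(x)}(\Omega)$ by hypothesis, $|\Omega|<\infty$, and $\theta_{1}(x)\le\theta(x)$ a.e., so the continuous inclusion $L^{\theta(x)}(\Omega)\subset L^{\theta_{1}(x)}(\Omega)$ recalled in Section~2 applies. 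The zeroth-order integral is also settled uniformly once one observes that $\xi(x)+\alpha(x)\le\gamma(x)+\beta(x)$ a.e.: in case (ii) this is assumed outright, and in case (i) it follows from $\xi(x)\beta(x)=\gamma(x)\alpha(x)$ and $\beta(x)\ge\alpha(x)$, since then $\xi(x)+\alpha(x)=\frac{\alpha(x)}{\beta(x)}\bigl(\gamma(x)+\beta(x)\bigr)\le\gamma(x)+\beta(x)$. Applying Lemma~\ref{var1} with the exponent pair $\gamma(x)+\beta(x)\ge\xi(x)+\alpha(x)$ then gives $\int_{\Omega}|u|^{\xi(x)+\alpha(x)}dx\le\int_{\Omega}|u|^{\gamma(x)+\beta(x)}dx+|\Omega|<\infty$.

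The real content is the estimate of the gradient terms. In case (i) the relation $\xi(x)\beta(x)=\gamma(x)\alpha(x)$ gives the pointwise identity
\[
|u|^{\xi(x)}|D_{i}u|^{\alpha(x)}=\bigl(|u|^{\gamma(x)}|D_{i}u|^{\beta(x)}\bigr)^{\alpha(x)/\beta(x)},
\]
with exponent $\alpha(x)/\beta(x)\in(0,1]$ because $\beta(x)\ge\alpha(x)$. Using the elementary inequality $f^{s}\le f+1$, valid for all $f\ge0$ and $s\in[0,1]$ (the pointwise mechanism behind Lemma~\ref{var1}), one gets $|u|^{\xi(x)}|D_{i}u|^{\alpha(x)}\le|u|^{\gamma(x)}|D_{i}u|^{\beta(x)}+1$; integrating over $\Omega$ and using $u\in S_{1,\gamma(x),\beta(x)}(\Omega)$ finishes case (i).

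Case (ii) is where I expect the main obstacle to lie, essentially careful bookkeeping with the variable exponents. Put $a(x):=\xi(x)-\frac{\gamma(x)\alpha(x)}{\beta(x)}=\frac{\xi(x)\beta(x)-\gamma(x)\alpha(x)}{\beta(x)}$, which is strictly positive since $\xi(x)\beta(x)>\gamma(x)\alpha(x)$, and $s(x):=\frac{\alpha(x)}{\beta(x)}$, which satisfies $s(x)\le\frac{\alpha^{+}}{\alpha^{+}+\varepsilon}<1$ uniformly, thanks to $\beta(x)\ge\alpha(x)+\varepsilon$. I would then split
\[
|u|^{\xi(x)}|D_{i}u|^{\alpha(x)}=|u|^{a(x)}\cdot\bigl(|u|^{\gamma(x)}|D_{i}u|^{\beta(x)}\bigr)^{s(x)}
\]
and apply Young's inequality pointwise with the conjugate exponents $p(x)=\frac{1}{1-s(x)}$ and $q(x)=\frac{1}{s(x)}$, noting that $p(x)$ stays bounded because $s(x)$ is bounded away from $1$; this yields
\[
|u|^{\xi(x)}|D_{i}u|^{\alpha(x)}\le|u|^{a(x)/(1-s(x))}+|u|^{\gamma(x)}|D_{i}u|^{\beta(x)}.
\]
The decisive point is the computation $\frac{a(x)}{1-s(x)}=\frac{\xi(x)\beta(x)-\gamma(x)\alpha(x)}{\beta(x)-\alpha(x)}\le\gamma(x)+\beta(x)$ a.e., which, after multiplying by the positive factor $\beta(x)-\alpha(x)$ and simplifying, is precisely equivalent to the hypothesis $\xi(x)+\alpha(x)\le\gamma(x)+\beta(x)$ of (ii). Hence $\int_{\Omega}|u|^{a(x)/(1-s(x))}dx\le\int_{\Omega}|u|^{\gamma(x)+\beta(x)}dx+|\Omega|<\infty$ by Lemma~\ref{var1} (or directly by the elementary inequality above), and since $\int_{\Omega}|u|^{\gamma(x)}|D_{i}u|^{\beta(x)}dx<\infty$ we obtain $\sum_{i=1}^{n}\int_{\Omega}|u|^{\xi(x)}|D_{i}u|^{\alpha(x)}dx<\infty$. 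Combined with the first paragraph this gives $u\in S_{1,\xi(x),\alpha(x),\theta_{1}(x)}(\Omega)$, establishing the inclusion \eqref{varembed}.
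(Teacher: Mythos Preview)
Your proof is correct and follows essentially the same approach as the paper's. In case (i) you write the identity $|u|^{\xi}|D_iu|^{\alpha}=(|u|^{\gamma}|D_iu|^{\beta})^{\alpha/\beta}$ and use $f^{s}\le f+1$, whereas the paper phrases this as an application of Young's inequality, but the content is identical; in case (ii) your splitting with $a(x)$ and $s(x)$, the pointwise Young estimate, and the key computation $\dfrac{\xi\beta-\gamma\alpha}{\beta-\alpha}\le\gamma+\beta$ reproduce the paper's argument exactly, and your observation that the gap $\beta\ge\alpha+\varepsilon$ keeps $s(x)$ bounded away from $1$ is precisely why that hypothesis is there.
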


\begin{proof}
First, suppose that (i) holds. Let $u\in S_{1,\gamma \left( x\right) ,\beta
\left( x\right) ,\theta \left( x\right) }\left( \Omega \right) ,$ to show
the embedding \eqref{varembed}, it is sufficient to verify the finiteness of 
$\Re ^{\xi ,\alpha ,\theta _{1}}\left( u\right) .$%
\begin{equation*}
\Re ^{\xi ,\alpha ,\theta _{1}}\left( u\right) =\int\limits_{\Omega
}\left\vert u\right\vert ^{\theta _{1}\left( x\right)
}dx+\sum_{i=1}^{n}\int\limits_{\Omega }\left\vert u\right\vert ^{\xi \left(
x\right) }\left\vert D_{i}u\right\vert ^{\alpha \left( x\right) }dx
\end{equation*}%
estimating the first integral on the right member of the above equation with
the help of Lemma \ref{var1} and second one by employing Young's inequality,
we acquire 
\begin{equation*}
\Re ^{\xi ,\alpha ,\theta _{1}}\left( u\right) \leq \left( n+1\right)
\left\vert \Omega \right\vert +\int\limits_{\Omega }\left\vert u\right\vert
^{\theta \left( x\right) }dx+\sum_{i=1}^{n}\int\limits_{\Omega }\left\vert
u\right\vert ^{\frac{\xi \left( x\right) \beta \left( x\right) }{\alpha
\left( x\right) }}\left\vert D_{i}u\right\vert ^{\beta \left( x\right) }dx.
\end{equation*}%
From the conditions, $\frac{\xi \left( x\right) \beta \left( x\right) }{%
\alpha \left( x\right) }=\gamma \left( x\right)$ that yields 
\begin{equation*}
\Re ^{\xi ,\alpha ,\theta _{1}}\left( u\right) \leq \Re ^{\gamma ,\beta
,\theta }\left( u\right) +\left( n+1\right) \left\vert \Omega \right\vert ,
\end{equation*}%
so \eqref{varembed} is gained. We note that when the case $\beta \left(
x\right) =\alpha \left( x\right) $ a.e. $x\in \Omega ,$ then $\xi \left(
x\right) =\gamma \left( x\right) $ hence \eqref{varembed} can be obtained by
similar operations as above.

Now, assume that (ii) fulfills. We need to show that $\Re ^{\xi ,\alpha
,\theta _{1}}\left( u\right) $ is finite. We have 
\begin{align*}
&\Re ^{\xi ,\alpha ,\theta _{1}}\left( u\right) =\int\limits_{\Omega
}\left\vert u\right\vert ^{\theta _{1}\left( x\right)
}dx+\sum_{i=1}^{n}\int\limits_{\Omega }\left\vert u\right\vert ^{\xi \left(
x\right) }\left\vert D_{i}u\right\vert ^{\alpha \left( x\right) }dx \\
&=\int\limits_{\Omega }\left\vert u\right\vert ^{\theta _{1}\left( x\right)
}dx+\sum_{i=1}^{n}\int\limits_{\Omega }\left\vert u\right\vert ^{\xi \left(
x\right) -\frac{\gamma \left( x\right) \alpha \left( x\right) }{\beta \left(
x\right) }}\left\vert u\right\vert ^{\frac{\gamma \left( x\right) \alpha
\left( x\right) }{\beta \left( x\right) }}\left\vert D_{i}u\right\vert
^{\alpha \left( x\right) }dx.
\end{align*}%
If we estimate the first integral on the right member of the above equation
with the help of Lemma \ref{var1} and second one by employing Young's
inequality with the exponent $\frac{\beta \left( x\right) }{\alpha \left(
x\right) }$ at every point, one can acquire that 
\begin{align*}
\Re ^{\xi ,\alpha ,\theta _{1}}\left( u\right)&\leq \int\limits_{\Omega
}\left\vert u\right\vert ^{\theta \left( x\right) }dx+\left\vert \Omega
\right\vert +\sum_{i=1}^{n}\int\limits_{\Omega }\left\vert u\right\vert
^{\gamma \left( x\right) }\left\vert D_{i}u\right\vert ^{\beta \left(
x\right) }dx \\
&+n\int\limits_{\Omega }\left\vert u\right\vert ^{\frac{\xi \left( x\right)
\beta \left( x\right) -\gamma \left( x\right) \alpha \left( x\right) }{\beta
\left( x\right) -\alpha \left( x\right) }}dx.
\end{align*}%
In the light of the condition (ii), the inequality $\frac{\xi \left(
x\right) \beta \left( x\right) -\gamma \left( x\right) \alpha \left(
x\right) }{\beta \left( x\right) -\alpha \left( x\right) }<\gamma \left(
x\right) +\beta \left( x\right) $ holds so estimating the third integral in
the right side of the last inequality by Lemma \ref{var1}, we arrive at 
\begin{align*}
\Re ^{\xi ,\alpha ,\theta _{1}}\left( u\right) &\leq \left( n+1\right)
\int\limits_{\Omega }\left\vert u\right\vert ^{\theta \left( x\right)
}dx+\left( n+1\right) \left\vert \Omega \right\vert
+\sum_{i=1}^{n}\int\limits_{\Omega }\left\vert u\right\vert ^{\gamma \left(
x\right) }\left\vert D_{i}u\right\vert ^{\beta \left( x\right) }dx \\
&\leq \left( n+1\right) \left( \Re ^{\gamma ,\beta ,\theta }\left(
u\right)+\left\vert \Omega \right\vert \right)
\end{align*}%
hence from here desired inequality is achieved. Also if $\theta _{1}\left(
x\right) =\theta \left( x\right) $ a.e. $x\in \Omega ,$ by employing the
same operations one can show \eqref{varembed}.
\end{proof}

\begin{lemma}
\label{kormet} Let $\beta ,$ $\gamma $ and $\psi $ satisfy the conditions of
Theorem \ref{bijec}, then \newline
$S_{1,\gamma \left( x\right) ,\beta \left( x\right) ,\theta \left( x\right)
}\left( \Omega \right) $ is a metric space with the metric which is defined
below. $\forall u,$ $v\in S_{1,\gamma \left( x\right) ,\beta \left( x\right)
,\theta \left( x\right) }\left( \Omega \right), $%
\begin{equation*}
d_{S_{1}}\left( u,v\right) :=\left\Vert \varphi \left( u\right) -\varphi
\left( v\right) \right\Vert _{L^{\psi \left( x\right) }\left( \Omega \right)
}+\sum_{i=1}^{n}\left\Vert \varphi _{t}^{\prime }\left( u\right)
D_{i}u-\varphi _{t}^{\prime }\left( v\right) D_{i}u\right\Vert _{L^{\beta
\left( x\right) }\left( \Omega \right) },
\end{equation*}%
here $\varphi \left( x,t\right) =\left\vert t\right\vert ^{\frac{\gamma
\left( x\right) }{\beta \left( x\right) }}t$ and for every fixed $x\in\Omega$%
, $\varphi _{t}^{\prime }\left( t\right) =\left( \frac{\gamma \left(
x\right) }{\beta \left( x\right) }+1\right) \left\vert t\right\vert ^{\frac{%
\gamma \left( x\right) }{\beta \left( x\right) }}.$
\end{lemma}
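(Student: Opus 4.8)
The plan is to recognise $d_{S_1}$ as the pull-back, along an injective map, of a genuine norm on a product of variable exponent Lebesgue spaces; this reduces the lemma to checking that the map is well defined and that it is injective, after which every metric axiom becomes automatic.

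Concretely, I would introduce
\[
T(u):=\big(\varphi(u),\ \varphi_t'(u)D_1u,\ \dots,\ \varphi_t'(u)D_nu\big),
\]
regarded as a map from $S_{1,\gamma(x),\beta(x),\theta(x)}(\Omega)$ into the product $Y:=L^{\psi(x)}(\Omega)\times\big(L^{\beta(x)}(\Omega)\big)^n$ equipped with the sum of the Luxemburg norms of its factors; here $\varphi$ and $\varphi_t'$ are exactly as in the statement and $\psi(x)=\theta(x)\beta(x)/(\gamma(x)+\beta(x))$. The first step is to verify that $T$ takes values in $Y$. For the zeroth coordinate one uses the pointwise identity $|\varphi(u)|^{\psi(x)}=|u|^{\psi(x)(\gamma(x)+\beta(x))/\beta(x)}=|u|^{\theta(x)}$, which holds by the choice of $\psi$, so that $\varphi(u)\in L^{\psi(x)}(\Omega)$ because $u\in L^{\theta(x)}(\Omega)$ by the definition of the space. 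For the $i$-th coordinate one uses $|\varphi_t'(u)D_iu|^{\beta(x)}=\big(\tfrac{\gamma(x)}{\beta(x)}+1\big)^{\beta(x)}|u|^{\gamma(x)}|D_iu|^{\beta(x)}$, whose prefactor is bounded since $\gamma,\beta\in M_0(\Omega)$, and $\int_\Omega|u|^{\gamma(x)}|D_iu|^{\beta(x)}\,dx<\infty$ again by the definition of the space; this is precisely the easy half of Theorem \ref{bijec}. Reading the subtracted term in the $i$-th summand of $d_{S_1}$ as $\varphi_t'(v)D_iv$, one then has, for all $u,v$, the identification $d_{S_1}(u,v)=\|T(u)-T(v)\|_Y$.

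From this identification the rest is short. Since the Luxemburg functional is a norm whenever the exponent is $\ge 1$ almost everywhere, $\|\cdot\|_Y$ is a norm on $Y$; hence $d_{S_1}$ is non-negative and symmetric, and the triangle inequality $d_{S_1}(u,w)\le d_{S_1}(u,v)+d_{S_1}(v,w)$ follows from $T(u)-T(w)=(T(u)-T(v))+(T(v)-T(w))$ together with the triangle inequality for $\|\cdot\|_Y$ — this is the one place where the telescoping of the derivative terms (so that $\varphi_t'(v)D_iv$, not $\varphi_t'(v)D_iu$, must sit in the subtracted slot) is essential. For the separation axiom, $d_{S_1}(u,v)=0$ forces $\|\varphi(u)-\varphi(v)\|_{L^{\psi(x)}(\Omega)}=0$, hence $\varphi(u)=\varphi(v)$ a.e.; since $\gamma(x)/\beta(x)+1>0$ (indeed $\gamma,\beta\ge 1$), the function $t\mapsto|t|^{\gamma(x)/\beta(x)}t$ is strictly increasing on $\mathbb{R}$ for a.e. fixed $x$ and therefore injective, so $u=v$ a.e.; the converse is trivial. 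Together these give that $d_{S_1}$ is a metric.

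I expect the only genuine obstacle to be the bookkeeping behind the well-definedness of $T$ — that is, Theorem \ref{bijec}: if instead of $\varphi_t'(u)D_iu$ one wished to work with the full weak derivative $D_i\varphi(u)$, the chain rule (legitimate since $\gamma,\beta\in C^1(\bar{\Omega})$) would contribute the extra term $\partial_{x_i}\!\big(\gamma/\beta\big)\,|u|^{\gamma(x)/\beta(x)}\,\ln|u|\,u$, and establishing its membership in $L^{\beta(x)}(\Omega)$ is exactly what the logarithmic integral inequality of Lemma \ref{var3} (used together with Lemma \ref{var1}) is for. Everything else is routine.
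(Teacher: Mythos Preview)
Your proposal is correct and matches the paper's own proof: both invoke Theorem~\ref{bijec} to see that $\varphi(u)\in L^{\psi(x)}(\Omega)$ and $\varphi_t'(u)D_iu\in L^{\beta(x)}(\Omega)$, and then derive the metric axioms from the norm properties together with the injectivity of $t\mapsto|t|^{\gamma(x)/\beta(x)}t$ --- the paper simply lists axioms (i)--(v) explicitly, whereas you package the same computation as the pull-back of the product norm along the injective map $T$. You are also right to flag the typo in the statement (the subtracted term in the $i$-th summand should be $\varphi_t'(v)D_iv$, not $\varphi_t'(v)D_iu$), since otherwise neither symmetry nor the triangle inequality would hold.
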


\begin{proof}
It has shown in Theorem \ref{bijec} that $\varphi \left( u\right) \in
L^{\psi \left( x\right) }\left( \Omega \right) $ \footnote{%
From now on, we denote $\varphi \left(x, u\right):=\varphi \left(
u\right)=\left\vert u\right\vert ^{\frac{\gamma \left( x\right) }{\beta
\left( x\right) }}u$ for simplicity.} and $\varphi _{t}^{\prime }\left(
u\right) D_{i}u\in L^{\beta \left( x\right) }\left( \Omega \right) $
whenever $u\in S_{1,\gamma \left( x\right) ,\beta \left( x\right) ,\theta
\left( x\right) }\left( \Omega \right) ,$ thus one can verify that $%
d_{S_{1}}\left( .,.\right) :S_{1,\gamma \left( x\right) ,\beta \left(
x\right) ,\theta \left( x\right) }\left( \Omega \right) \rightarrow \mathbb{R%
} $ satisfy the metric axioms i.e.\medskip

(i) $d_{S_{1}}\left( u,v\right) \geq 0,$ (ii) $d_{S_{1}}\left( u,v\right)
=d_{S_{1}}\left( v,u\right),$ (iii) $u=v\Rightarrow d_{S_{1}}\left(
u,v\right) =0$ \medskip

(iv) $d_{S_{1}}\left( u,v\right) =0\Rightarrow \left\Vert \varphi \left(
u\right) -\varphi \left( v\right) \right\Vert _{L^{\psi \left( x\right)
}\left( \Omega \right) }=0\Rightarrow \varphi \left( u\right) =\varphi
\left( v\right) $ since $\varphi $ is 1-1, then $u=v.$ \medskip

(v) From the subadditivity of norm, $d_{S_{1}}\left( u,v\right) \leq
d_{S_{1}}\left( u,w\right) +d_{S_{1}}\left( w,v\right) $
\end{proof}

\begin{theorem}
Under the conditions of Theorem \ref{bijec}, $\varphi $ is a homeomorphism
between the spaces $S_{1,\gamma \left( x\right) ,\beta \left( x\right)
,\theta \left( x\right) }\left( \Omega \right) $ and $L^{1,\text{ }\beta
\left( x\right) }\left( \Omega \right) \cap L^{\psi \left( x\right) }\left(
\Omega \right) .$
\end{theorem}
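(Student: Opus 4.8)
The plan is to build on Theorem \ref{bijec}, which already gives that $\varphi$ is a \emph{bijection} of $S_{1,\gamma(x),\beta(x),\theta(x)}(\Omega)$ onto $L^{1,\beta(x)}(\Omega)\cap L^{\psi(x)}(\Omega)$, and on Lemma \ref{kormet}, which metrizes the source by $d_{S_{1}}$. I would equip the target with the natural metric
\begin{equation*}
\rho(v,w):=\left\Vert v-w\right\Vert _{L^{\psi(x)}(\Omega)}+\sum_{i=1}^{n}\left\Vert D_{i}v-D_{i}w\right\Vert _{L^{\beta(x)}(\Omega)},
\end{equation*}
whose metric axioms are verified exactly as in Lemma \ref{kormet}. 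It then suffices to establish the equivalence $d_{S_{1}}(u_{k},u)\to 0\Longleftrightarrow\rho(\varphi(u_{k}),\varphi(u))\to 0$, which is the joint continuity of $\varphi$ and $\varphi^{-1}$.

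The first reduction is the weak chain rule: since $\gamma,\beta\in C^{1}(\bar\Omega)$, for $u\in S_{1,\gamma(x),\beta(x),\theta(x)}(\Omega)$ one has
\begin{equation*}
D_{i}\varphi(x,u)=\varphi_{t}^{\prime}(x,u)\,D_{i}u+R_{i}(u),\qquad R_{i}(u):=\Bigl(\partial_{x_{i}}\tfrac{\gamma(x)}{\beta(x)}\Bigr)\,|u|^{\frac{\gamma(x)}{\beta(x)}}u\,\ln|u| ,
\end{equation*}
an identity implicit in the proof of Theorem \ref{bijec} (and, if a direct argument is wanted, obtained by smoothing $u$ and passing to the limit in the log term via Lemma \ref{var3}). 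Here $R_{i}(u)\in L^{\beta(x)}(\Omega)$ because $|R_{i}(u)|^{\beta(x)}\le C\,|u|^{\gamma(x)+\beta(x)}|\ln|u||^{\beta(x)}$, which by Lemma \ref{var3} (with $\xi=\gamma+\beta$, $\beta_{1}\equiv\varepsilon_{0}$) and then Lemma \ref{var1} is bounded by $C'\bigl(\int_{\Omega}|u|^{\theta(x)}dx+|\Omega|\bigr)<\infty$, using $\theta\ge\gamma+\beta+\varepsilon_{0}$ from Proposition \ref{oner}. Consequently the $L^{\psi(x)}$-summand of $\rho(\varphi(u_{k}),\varphi(u))$ equals the $L^{\psi(x)}$-summand of $d_{S_{1}}(u_{k},u)$, while for each $i$ the triangle inequality gives
\begin{equation*}
\bigl|\,\Vert D_{i}\varphi(u_{k})-D_{i}\varphi(u)\Vert _{L^{\beta(x)}}-\Vert \varphi_{t}^{\prime}(u_{k})D_{i}u_{k}-\varphi_{t}^{\prime}(u)D_{i}u\Vert _{L^{\beta(x)}}\,\bigr|\le\Vert R_{i}(u_{k})-R_{i}(u)\Vert _{L^{\beta(x)}}.
\end{equation*}
Hence the whole statement reduces to one continuity claim: $R_{i}(u_{k})\to R_{i}(u)$ in $L^{\beta(x)}(\Omega)$ whenever $\Vert \varphi(u_{k})-\varphi(u)\Vert _{L^{\psi(x)}}\to 0$ --- and each of $d_{S_{1}}(u_{k},u)\to 0$ and $\rho(\varphi(u_{k}),\varphi(u))\to 0$ contains exactly this summand.

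The heart of the proof, and where I expect the real work, is this continuity of the logarithmic superposition operator $R_{i}$. From $\Vert \varphi(u_{k})-\varphi(u)\Vert _{L^{\psi(x)}}\to 0$ and Lemma \ref{denknorm} we get $\varphi(u_{k})\to\varphi(u)$ in measure together with $\sigma_{\psi}(\varphi(u_{k}))\to\sigma_{\psi}(\varphi(u))$; since $|\varphi(u)|^{\psi(x)}=|u|^{\theta(x)}$ the latter reads $\int_{\Omega}|u_{k}|^{\theta(x)}dx\to\int_{\Omega}|u|^{\theta(x)}dx$, and since $\varphi^{-1}$ is jointly continuous on $\bar\Omega\times\mathbb{R}$ also $u_{k}\to u$ in measure. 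Because a sequence of nonnegative functions converging in measure on a finite-measure set with $\int f_{k}\to\int f<\infty$ converges in $L^{1}$, we obtain $|u_{k}|^{\theta(x)}\to|u|^{\theta(x)}$ in $L^{1}(\Omega)$, so $\{|u_{k}|^{\theta(x)}\}_{k}$ is uniformly integrable. Next, from the pointwise bound $\ln|t|\le N_{0}(\delta)|t|^{\delta}$ established in the proof of Lemma \ref{var2} (taken with $\delta=\varepsilon_{0}/\beta^{+}$), together with $\theta\ge\gamma+\beta+\varepsilon_{0}$ and the boundedness of $\partial_{x_{i}}(\gamma/\beta)$, splitting $\Omega$ into $\{|u_{k}|\ge 1\}$ and $\{|u_{k}|<1\}$ (on the latter $|t|^{\gamma(x)+\beta(x)}|\ln|t||^{\beta(x)}$ is bounded since $\gamma^{-}+\beta^{-}\ge 2$), one obtains the pointwise domination $|R_{i}(u_{k})|^{\beta(x)}\le C\bigl(|u_{k}|^{\theta(x)}+1\bigr)$; hence $\{|R_{i}(u_{k})|^{\beta(x)}\}_{k}$ is uniformly integrable. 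Since $R_{i}(u_{k})\to R_{i}(u)$ in measure (the map $(x,t)\mapsto\partial_{x_{i}}(\gamma/\beta)\,|t|^{\gamma/\beta}t\ln|t|$ being jointly continuous, with value $0$ at $t=0$), Vitali's theorem yields $\sigma_{\beta}(R_{i}(u_{k})-R_{i}(u))=\int_{\Omega}|R_{i}(u_{k})-R_{i}(u)|^{\beta(x)}dx\to 0$, so $\Vert R_{i}(u_{k})-R_{i}(u)\Vert _{L^{\beta(x)}}\to 0$ by Lemma \ref{denknorm}.

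Assembling: if $d_{S_{1}}(u_{k},u)\to 0$, then its $L^{\psi(x)}$-summand tends to $0$, so the continuity claim applies and $\Vert R_{i}(u_{k})-R_{i}(u)\Vert _{L^{\beta(x)}}\to 0$; the displayed triangle-inequality bound then forces $\Vert D_{i}\varphi(u_{k})-D_{i}\varphi(u)\Vert _{L^{\beta(x)}}\to 0$, i.e. $\rho(\varphi(u_{k}),\varphi(u))\to 0$; the converse is identical with the two $L^{\beta(x)}$-quantities interchanged. Thus the only genuinely delicate point is the uniform-integrability estimate for $|R_{i}(u_{k})|^{\beta(x)}$, which is precisely why Lemmas \ref{var1}--\ref{var3} were set up beforehand --- indeed, when $\gamma/\beta$ is constant all $R_{i}$ vanish and $\varphi$ becomes, up to the $L^{\psi(x)}$-identification, an isometry, so the entire difficulty is the genuinely variable-exponent one. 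A secondary point worth a line in the write-up is the justification of the weak chain-rule identity for merely $L^{1}$ functions with the stated integrability, which the smoothing argument mentioned above settles.
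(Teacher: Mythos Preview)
Your proposal is correct and follows essentially the same route as the paper: the chain-rule identity splitting $D_i\varphi(u)$ into $\varphi_t'(u)D_iu$ plus the logarithmic remainder $R_i(u)$, the control of $R_i$ via Lemma~\ref{var3} (resp.\ the pointwise estimate from the proof of Lemma~\ref{var2}), and the passage to the limit by uniform integrability and Vitali's theorem together with Lemma~\ref{denknorm}. Your symmetric reduction of both directions to the single continuity claim for $R_i$ is a slightly cleaner packaging than the paper's, which proves one direction in full and declares the other ``similar'', but the substance is the same.
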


\begin{proof}
The function $\varphi $ is a bijection between $S_{1,\gamma \left( x\right)
,\beta \left( x\right) ,\theta \left( x\right) }\left( \Omega \right) $ and $%
L^{1,\text{ }\beta \left( x\right) }\left( \Omega \right) \cap L^{\psi
\left( x\right) }\left( \Omega \right)$ by Theorem \ref{bijec}. Thus it is
ample to prove the continuity of $\varphi $ as well as $\varphi ^{-1}$ in
the sense of topology induced by the metric $d_{S_{1}}\left( .,.\right) .$
For this, we need to show that \newline
\textit{\textbf{(i)}} $d_{S_{1}}\left( u_{m},u_{0}\right) \underset{%
m\nearrow \infty }{\longrightarrow }0\Rightarrow \varphi \left( u_{m}\right) 
{\underset{m\nearrow \infty }{\overset{L^{1,\text{ }\beta \left( x\right)
}\left( \Omega \right) \cap L^{\psi \left( x\right) }\left( \Omega \right) }{%
\longrightarrow }}\varphi \left( u_{0}\right) }$ for every \newline
$\left\{ u_{m}\right\} _{m=1}^{\infty }\in S_{1,\gamma \left( x\right)
,\beta \left( x\right) ,\theta \left( x\right) }\left( \Omega \right) $
which converges to $u_{0}$ and \medskip \newline
\textit{\textbf{(ii)}} $v_{m}{\underset{m\nearrow \infty }{\overset{L^{1,%
\text{ }\beta \left( x\right) }\left( \Omega \right) \cap L^{\psi \left(
x\right) }\left( \Omega \right) }{\longrightarrow }}}v_{0}\Rightarrow
d_{S_{1}}\left( \varphi ^{-1}\left( v_{m}\right) ,\varphi ^{-1}\left(
v_{0}\right) \right) \underset{m\nearrow \infty }{\longrightarrow }0$ for
every $\left\{ v_{m}\right\} _{m=1}^{\infty }\in L^{1,\text{ }\beta \left(
x\right) }\left( \Omega \right) \cap L^{\psi \left( x\right) }\left( \Omega
\right) $ which converges to $v_{0}.$

\noindent Since for every $v_{m}$ and $v_{0}$, there exist unique $u_{m}$
and $u_{0}\in S_{1,\gamma \left( x\right) ,\beta \left( x\right) ,\theta
\left( x\right) }\left( \Omega \right) $ such that $\varphi \left(
u_{m}\right) =v_{m}$ and $\varphi \left( u_{0}\right) =v_{0},$ the
implication \textit{(ii)} can be written equivalently, \newline
$\varphi \left( u_{m}\right) {\underset{m\nearrow \infty }{\overset{L^{1,%
\text{ }\beta \left( x\right) }\left( \Omega \right) \cap L^{\psi \left(
x\right) }\left( \Omega \right) }{\longrightarrow }}\varphi }\left(
u_{0}\right) \Rightarrow d_{S_{1}}\left( u_{m},u_{0}\right) \underset{%
m\nearrow \infty }{\longrightarrow }0$ for every $\left\{ u_{m}\right\}\in
S_{1,\gamma \left( x\right) ,\beta \left( x\right) ,\theta \left( x\right)
}\left( \Omega \right) $ which converges to $u_{0}$

\noindent Since the proofs of \textit{\textbf{(i)}} and \textit{\textbf{(ii)}%
} are similar, we only prove \textit{\textbf{(ii)}: Let }$v_{0},$ $\left\{
v_{m}\right\} _{m=1}^{\infty }\in L^{1,\text{ }\beta \left( x\right) }\left(
\Omega \right) \cap L^{\psi \left( x\right) }\left( \Omega \right) $ and $%
v_{m}\overset{L^{1,\text{ }\beta \left( x\right) }\left( \Omega \right) \cap
L^{\psi \left( x\right) }\left( \Omega \right) }{{\longrightarrow }}%
v_{0}\Leftrightarrow \varphi \left( u_{m}\right) \overset{L^{1,\text{ }\beta
\left( x\right) }\left( \Omega \right) \cap L^{\psi \left( x\right) }\left(
\Omega \right) }{{\longrightarrow }}{\varphi }\left( u_{0}\right) .$ \newline
To verify $d_{S_{1}}\left( u_{m},u_{0}\right) \rightarrow 0,$ by definition
of metric $d_{S_{1}}$ it is ample to demonstrate that 
\begin{equation*}
\left\Vert \varphi _{t}^{\prime }\left( u_{m}\right) D_{i}u_{m}-\varphi
_{t}^{\prime }\left( u_{0}\right) D_{i}u_{0}\right\Vert _{L^{\beta \left(
x\right) }\left( \Omega \right) }\rightarrow 0\text{ and }\left\Vert \varphi
\left( u_{m}\right) -\varphi \left( u_{0}\right) \right\Vert _{L^{\psi
\left( x\right) }\left( \Omega \right) }\rightarrow 0
\end{equation*}
as $m\nearrow \infty .$

\noindent Since $\varphi \left( u_{m}\right) {{\overset{L^{1,\text{ }\beta
\left( x\right) }\left( \Omega \right) \cap L^{\psi \left( x\right) }\left(
\Omega \right) }{\longrightarrow }}\varphi }\left( u_{0}\right) ,$ we have 
\newline
$\left\Vert \varphi \left( u_{m}\right) -\varphi \left( u_{0}\right)
\right\Vert _{L^{\psi \left( x\right) }\left( \Omega \right) }{\rightarrow }%
0 $ and $\left\Vert D_{i}\left( \varphi \left( u_{m}\right) -\varphi \left(
u_{0}\right) \right) \right\Vert _{L^{\beta \left( x\right) }\left( \Omega
\right) }{\rightarrow }0.$ \newline
Hence we only need to show that 
\begin{equation*}
\left\Vert \varphi _{t}^{\prime }\left( u_{m}\right) D_{i}u_{m}-\varphi
_{t}^{\prime }\left( u_{0}\right) D_{i}u_{0}\right\Vert _{L^{\beta \left(
x\right) }\left( \Omega \right) }{\longrightarrow }0,\text{ }as\text{ }%
m\nearrow \infty .
\end{equation*}%
As we know from Lemma \ref{denknorm} 
\begin{equation}  \label{4.5}
\left\Vert \varphi _{t}^{\prime }\left( u_{m}\right) D_{i}u_{m}-\varphi
_{t}^{\prime }\left( u_{0}\right) D_{i}u_{0}\right\Vert _{L^{\beta \left(
x\right) }\left( \Omega \right) }\rightarrow 0\text{ }\Leftrightarrow \sigma
_{\beta }\left( \varphi _{t}^{\prime }\left( u_{m}\right) D_{i}u_{m}-\varphi
_{t}^{\prime }\left( u_{0}\right) D_{i}u_{0}\right) \rightarrow 0  \tag{4.5}
\end{equation}%
Based on \eqref{4.5}, for $i=\overline{1,n}$ 
\begin{equation}  \label{4.6}
\sigma _{\beta }\left( \varphi _{t}^{\prime }\left( u_{m}\right)
D_{i}u_{m}-\varphi _{t}^{\prime }\left( u_{0}\right) D_{i}u_{0}\right)
=\int\limits_{\Omega }\left\vert \varphi _{t}^{\prime }\left( u_{m}\right)
D_{i}u_{m}-\varphi _{t}^{\prime }\left( u_{0}\right) D_{i}u_{0}\right\vert
^{\beta \left( x\right) }dx  \tag{4.6}
\end{equation}%
one can show that the following equality holds 
\begin{equation*}
\varphi _{t}^{\prime }\left( u_{m}\right) D_{i}u_{m}-\varphi _{t}^{\prime
}\left( u_{0}\right) D_{i}u_{0}=\left( \tfrac{\beta \left( x\right) }{\beta
\left( x\right) +\gamma \left( x\right) }\right) D_{i}\left( \varphi \left(
u_{m}\right) -\varphi \left( u_{0}\right) \right) -
\end{equation*}%
\begin{equation}  \label{4.7}
-\left( \tfrac{D_{i}\gamma .\beta -\gamma .D_{i}\beta }{\beta \left( \gamma
+\beta \right) }\right) \left( \left\vert u_{m}\right\vert ^{\frac{\gamma
\left( x\right) }{\beta \left( x\right) }}u_{m}\ln \left\vert
u_{m}\right\vert -\left\vert u_{0}\right\vert ^{\frac{\gamma \left( x\right) 
}{\beta \left( x\right) }}u_{0}\ln \left\vert u_{0}\right\vert \right) 
\tag{4.7}
\end{equation}%
Substituting \eqref{4.7} into \eqref{4.6}, we acquire 
\begin{equation*}
\sigma _{\beta }\left( \varphi _{t}^{\prime }\left( u_{m}\right)
D_{i}u_{m}-\varphi _{t}^{\prime }\left( u_{0}\right) D_{i}u_{0}\right) =
\end{equation*}%
\begin{equation*}
=\int\limits_{\Omega }\left\vert \left( \tfrac{\beta \left( x\right) }{%
\gamma \left( x\right) +\beta \left( x\right) }\right) D_{i}\left( \varphi
\left( u_{m}\right) -\varphi \left( u_{0}\right) \right) -\right.
\end{equation*}%
\begin{equation*}
\left. \text{ \ \ \ \ \ \ \ \ \ \ \ \ \ \ \ \ \ }-\left( \tfrac{D_{i}\gamma
.\beta -\gamma .D_{i}\beta }{\beta \left( \gamma +\beta \right) }\right)
\left( \left\vert u_{m}\right\vert ^{\frac{\gamma \left( x\right) }{\beta
\left( x\right) }}u_{m}\ln \left\vert u_{m}\right\vert -\left\vert
u_{0}\right\vert ^{\frac{\gamma \left( x\right) }{\beta \left( x\right) }%
}u_{0}\ln \left\vert u_{0}\right\vert \right) \right\vert ^{^{\beta \left(
x\right) }}
\end{equation*}%
taking $\beta \left( x\right) $ into the absolute value and applying known
inequality, we gain 
\begin{align*}
& \leq 2^{\beta ^{+}-1}\int\limits_{\Omega }\left\vert D_{i}\left( \varphi
\left( u_{m}\right) \right) -D_{i}\left( \varphi \left( u_{0}\right) \right)
\right\vert ^{\beta \left( x\right) }dx \\
& +C_{3}\int\limits_{\Omega }\left\vert \left\vert u_{m}\right\vert ^{\frac{%
\gamma \left( x\right) }{\beta \left( x\right) }}u_{m}\ln \left\vert
u_{m}\right\vert -\left\vert u_{0}\right\vert ^{\frac{\gamma \left( x\right) 
}{\beta \left( x\right) }}u_{0}\ln \left\vert u_{0}\right\vert \right\vert
^{\beta \left( x\right) }dx  \tag{4.8}
\end{align*}
here $C_{3}=C_{3}\left( \beta ^{+},\left\Vert \gamma \right\Vert
_{C^{1}\left( \bar{\Omega}\right) },\left\Vert \beta \right\Vert
_{C^{1}\left( \bar{\Omega}\right) }\right) >0$ is constant.

Since $\left\Vert D_{i}\left( \varphi \left( u_{m}\right) -\varphi \left(
u_{0}\right) \right) \right\Vert _{L^{\beta \left( x\right) }\left( \Omega
\right) }{\longrightarrow }0$ as $m\nearrow \infty $ the first integral in
the right member of (4.8) converges to zero when $m$ tends to
infinity.(Lemma \ref{denknorm}).

\noindent From Theorem \ref{bijec}, function $\varphi $ is bijective between
the spaces $L^{\theta \left( x\right) }\left( \Omega \right) $ and $L^{\psi
\left( x\right) }\left( \Omega \right) .$ Also since $\left\Vert \varphi
\left( u_{m}\right) -\varphi \left( u_{0}\right) \right\Vert _{L^{\psi
\left( x\right) }\left( \Omega \right) }{\longrightarrow }0,$ we arrive at%
\begin{equation}
\varphi \left( u_{m}\right) \overset{a.e}{\underset{\Omega }{{%
\longrightarrow }}}\varphi \left( u_{0}\right) \Rightarrow u_{m}\overset{a.e}%
{\underset{\Omega }{{\longrightarrow }}}u_{0}  \tag{4.9}
\end{equation}%
and,%
\begin{align*}
\sigma _{\theta }\left( u_{m}\right) &=\int\limits_{\Omega }\left\vert
u_{m}\right\vert ^{\theta \left( x\right) }dx=\int\limits_{\Omega
}\left\vert \left\vert u_{m}\right\vert ^{\frac{\gamma \left( x\right) }{%
\beta \left( x\right) }}u_{m}\right\vert ^{\psi \left( x\right) }dx \\
&=\int\limits_{\Omega }\left\vert \varphi \left( u_{m}\right) \right\vert
^{\psi \left( x\right) }dx\leq M  \tag{4.10}
\end{align*}%
for some $M>0.$

Employing (4.9), (4.10) and Vitali's Theorem \footnote{\begin{theorem}
\textbf{(Vitali)} (\cite{R})Let $(\Omega ,\Sigma ,\mu )$ be a finite measure
space, and $f_{n}:\Omega \rightarrow \mathbb{R}$ be a sequence of measurable
functions converging a.e. to a measurable $f$. Then $\left\Vert
f_{n}-f\right\Vert _{L^{1}\left( \Omega \right) }\rightarrow 0$ as $%
n\rightarrow \infty $ iff $\left\{ f_{n}:\text{ }n\geq 1\right\} $ is
uniformly integrable. When the condition is satisfied, we have%
\begin{equation*}
\lim_{n\rightarrow \infty }\int\limits_{\Omega }f_{n}d\mu
=\int\limits_{\Omega }fd\mu .
\end{equation*}%
\end{theorem}
}, we attain 
\begin{equation}
\int\limits_{\Omega }\left\vert u_{m}\right\vert ^{\theta \left( x\right)
}dx\longrightarrow \int\limits_{\Omega }\left\vert u_{0}\right\vert ^{\theta
\left( x\right) }dx,\text{ }m\nearrow \infty .  \tag{4.11}
\end{equation}%
Since $u_{m}$ converges to $u_{0}$ in measure on $\Omega ,$ using this and
(4.11), we deduce from Lemma \ref{denknorm} that 
\begin{equation}
\sigma _{\theta }\left( u_{m}-u_{0}\right) \longrightarrow 0\Rightarrow
\left\Vert u_{m}-u_{0}\right\Vert _{L^{\theta \left( x\right) }\left( \Omega
\right) }\longrightarrow 0.  \tag{4.12}
\end{equation}%
Denote $w_{m}:=$ $\left\vert u_{m}\right\vert ^{\frac{\gamma \left( x\right) 
}{\beta \left( x\right) }}u_{m}\ln \left\vert u_{m}\right\vert $ and $%
w_{0}:=\left\vert u_{0}\right\vert ^{\frac{\gamma \left( x\right) }{\beta
\left( x\right) }}u_{0}\ln \left\vert u_{0}\right\vert ,$ then%
\begin{equation*}
\sigma _{\beta }\left( w_{m}\right) =\int\limits_{\Omega }\left\vert
u_{m}\right\vert ^{\gamma \left( x\right) +\beta \left( x\right) }\left\vert
\ln \left\vert u_{m}\right\vert \right\vert ^{\beta \left( x\right) }dx
\end{equation*}%
estimating the above integral by using Lemma \ref{var3}, one can obtain 
\begin{equation*}
\sigma _{\beta }\left( w_{m}\right) \leq C_{4}\int\limits_{\Omega
}\left\vert u_{m}\right\vert ^{\theta \left( x\right) }dx+C_{5}=C_{4}\sigma
_{\theta }\left( u_{m}\right)+C_{5}.
\end{equation*}%
From (4.12), $\sigma _{\beta }\left( w_{m}\right) \leq \tilde{M}$ for all $%
m\geq 1,$ for some $\tilde{M}$ $>0$. Thus as shown above for $u_{m}$,
similarly we conclude that as $m\nearrow \infty $%
\begin{equation*}
\sigma _{\beta }\left( w_{m}-w_{0}\right) \longrightarrow 0\Rightarrow
\end{equation*}%
\begin{equation*}
\int\limits_{\Omega }\left\vert \left\vert u_{m}\right\vert ^{\frac{\gamma
\left( x\right) }{\beta \left( x\right) }}u_{m}\ln \left\vert
u_{m}\right\vert -\left\vert u_{0}\right\vert ^{\frac{\gamma \left( x\right) 
}{\beta \left( x\right) }}u_{0}\ln \left\vert u_{0}\right\vert \right\vert
^{\beta \left( x\right) }\longrightarrow 0
\end{equation*}%
hence from (4.8) we attain, 
\begin{equation*}
\left\Vert \varphi _{t}^{\prime }\left( u_{m}\right) D_{i}u_{m}-\varphi
_{t}^{\prime }\left( u_{0}\right) D_{i}u_{0}\right\Vert _{L^{\beta \left(
x\right) }\left( \Omega \right) }{\longrightarrow }0,\text{ }m\nearrow \infty
\end{equation*}%
so the proof is complete.
\end{proof}

\end{document}